\newcounter{notes}%
\definecolor{darkgreen}{rgb}{0.0, 0.5, 0.0}
\newtheorem{theorem}{Theorem}[section]
\newtheorem{lemma}[theorem]{Lemma}
\newtheorem{definition}[theorem]{Definition} 
\newtheorem{proposition}[theorem]{Proposition}
\newtheorem{remark}[theorem]{Remark} 
\newtheorem{question}[theorem]{Question}
\def\smallskip{\vspace{.15cm}}
\def\medskip{\vspace{.3cm}}
\def\text{\mbox}
\def\rh2{{\mathbb R}{\mathbb H}^2}
\def\ch2{{\mathbb C}{\mathbb H}^2}
\def\ZZ{{\mathbb Z}}
\def\NN{{\mathbb N}}
\def\RP2{{\mathbb{RP}}^2}
\def\RP3{{\mathbb{RP}}^3}
\def\RP{{\mathbb{RP}}}
\def\int{\operatorname{int}}
\def\H2R{{\mathbb H}^2\times {\mathbb R}}
\def\eps{\varepsilon}
\def\C2{\operatorname{C^2}}
\def\lcm{\operatorname{lcm}}
\definecolor{back}{RGB}{255,255,255}
\definecolor{fore}{RGB}{0,0,0}
\definecolor{title}{RGB}{255,0,90}
\definecolor{green}{rgb}{0.0, 0.5, 0.0}
\definecolor{purple}{rgb}{0.5, 0.0, 0.5}
\definecolor{bluegreen}{rgb}{0.0,0.5, 0.5}
\definecolor{orange}{rgb}{1,0.5, 0.1}
\definecolor{redgreen}{rgb}{0.5, 0.5, 0.0}
\def\green{\color{green}}
\def\green{\color{green}}
\def\g2{{\green 2}}
\newcommand{\bv}{\left[\begin{array}{c}}
\newcommand{\ev}{\end{array}\right]}
\newcommand{\bbmat}{\begin{bmatrix}} 
\newcommand{\ebmat}{\end{bmatrix}}
\newcommand{\bmat}{\begin{matrix}} 
\newcommand{\emat}{\end{matrix}}
\newcommand{\bpmat}{\begin{pmatrix}} 
\newcommand{\epmat}{\end{pmatrix}}
\newcommand{\ra}{\rightarrow}
\newcommand{\mP}{\mathbb P}
\begin{document}
\title{almost-regular Dessins d'Enfant on a Torus and Sphere} 
\author{Joachim K\"{o}nig, Arielle Leitner, and Danny Neftin} 
\maketitle 

\begin{abstract} 
The Hurwitz problem asks which ramification data are {\em realizable}, i.e., appear as the ramification type of a covering. 
We use dessins d'enfant to show that families of genus $1$ regular ramification data with small changes are realizable with the exception of four families which were recently shown to be nonrealizable. A similar description holds in the case of genus $0$ ramification data. 
\end{abstract}

\section{Introduction}\label{sec:intro}
A central goal in algebra, topology and geometry is describing maps using simple combinatorial data. In the equivalent categories, of topological coverings, branched coverings   $f:R\ra S$ of (compact connected, orientable\footnote{The Hurwitz problem for nonorientable Riemann surfaces reduces to the orientable case \cite[Proposition 2.7]{EKS}.} ) Riemann surfaces, and algebraic morphisms (of smooth projective curves),  each map has a fundamental invariant called its {\it ramification type}. The latter is the multiset of ramification multisets $\{e_f(Q)\,|\,Q\in f^{-1}(P)\}$ where $P$ runs over branch points of $f$. 
For example, the ramification type of the map  $\mP^1\ra\mP^1$ given by $x^2$ is $[2][2]$, where $\mP^1=\mP^1_{\mathbb C}$ is the complex projective line (or equivalently the $2$-dimensional real sphere). 

The Hurwitz (existence) problem is a classical question in the theory of coverings which seeks to classify under which conditions such combinatorial data arises from a covering. Since the problem remains open only in the case of coverings of $S=\mP^1$, we shall henceforth restrict to this case. An obvious necessary condition for such data is that it is consistent with the Riemann--Hurwitz  formula. 
Namely, a tuple $E$ of nontrivial partitions $E_1,\ldots,E_r$ of $n$ is called a {\it ramification data} if 
\begin{equation}
\label{equ:RH}
g_E:=1-n + \frac{1}{2}\cdot \sum_{i=1}^r\sum_{e\in E_i}(e-1)
\end{equation}
 is a nonnegative integer. The integers $n,g_E,r$ are called the {\it degree, genus, and number of branch points} of $E$, respectively. The Hurwitz problem then asks which ramification data are {\it realizable}  as the ramification type of a covering of $\mathbb{P}^1$? 
%

The case of ramification data of high genus is relatively well understood. Namely, if $E$ is a ramification data of degree $n$ and genus at least $(n+1)/2$ then $E$ is realizable, by Edmonds--Kulkarni--Stong \cite{EKS}. Moreover in view of recent computations, Zheng suggests \cite{Zheng} that every ramification type of genus at least $3$ and degree at least $5$ is realizable. In genus $2$, the only infinite family of nonrealizable ramification types found so far is $[2^*] [2^*] [2^*] [3,5,2^*]$, where $2^*$ denote that all remaining elements in the multiset equal $2$. 
Moreover, the nonoccuring genus $1$ ramification data in Zheng's \cite[Table 3]{Zheng}  are all ``almost-regular", that is, almost all entries in each multiset are equal. Such patterns can also be identified among the genus $0$ ramification data in \cite[Table 2]{Zheng}.

We consider families of almost-regular ramification data in the most subtle case where the ramification data is of genus is $0$ or $1$, that is, where the covering space is either  $\mP^1$ or a torus.  Here a family 
 $T = (T_i)_{i\in\mathbb N}$ of ramification data with $r$ branch points is called {\it almost-regular of type} $[k_1,\ldots,k_r]$ 
{\it and error\footnote{Note that there are many possible definitions of error, such as the one defined by taking $\eps_{i,j}:=\sum_{e\in T_{i,j}}|e-k_j|.$} at most $\eps$}, 
if there exist a tuple $A_j$ of positive integers different from $k_j$ for each $j=1,\ldots,r$, such that each $T_i$ is of the form $[A_1,k_1^{*}],\ldots,[A_r,k_r^{*}]$ 
with degree tending to infinity, and such that the sum  $\sum_{j=1}^r\sum_{a\in A_j}a\leq \eps$. 
For example, 
the ramification data $[2^*] [2^*] [2^*] [1^4 , 4, 2^*]$ are almost-Galois of type $[2,2,2,2] $ with error at most $8$. 

The ramification type of Galois coverings is {\it regular}, that is, almost-regular with error $0$. 
The ramification types of genus $0$ Galois coverings were already known to Klein \cite{Klein}, and the ramification types of genus $1$ Galois coverings are well known to be regular of type $[2,2,2,2]$, or $[3,3,3]$, or $[2,4,4]$, or $[2,3,6]$. 
Particular families\footnote{Many authors consider a generalized almost-regular ramification where one of the multisets has an absolutely bounded amount of entries, such as $[n/2,n/2] [2^*] [2^*]$, cf.~\cite{Thom}, \cite{Pakovitch}, \cite{PerPet}.} of almost-regular ramification data of the latter four types have been well studied. 
Notably, Pascali--Petronio \cite[Theorems 0.4, 0.5, 0.6]{PasPet} give necessary and sufficient arithmetic conditions on the degrees of the  genus $0$ ramification data  (1) $[1,2^*] [1,4^*] [1,4^*]$,  (2) $[1,3^*] [1,3^*] [1,3^*]$, and (3) $[1,2^*] [1,3^*] [1,6^*]$ to be realizable. Moreover, the realizability of many almost-regular families of the above types was also proved in the classification of monodromy groups of indecomposable coverings of low genus, see \cite{GS}, \cite{NZ}, \cite{NZ2}.

On the other hand, recently Do--Zieve \cite{Z} showed that the genus $1$ ramification data (A)  $[1,3,2^*]$ $[2^*]$ $[2^*]$ $[2^*]$ is nonrealizable, and conjectured the nonrealizability of the following data:
\begin{equation*}\label{equ:Zannier-types}
\, (B) \quad [2,4,3^*] [3^*] [3^*];\, (C) \quad [2^*] [3,5,4^*] [4^*]; \, (D) \quad [2^*] [3^*] [5,7,6^*]. 
\end{equation*} See Section \ref{nonex:genus1} for another proof of the nonrealizability of type (A). 
A simple proof of this conjecture was given by Corvaja--Zannier \cite{CZ}, and a proof can also be derived from Izmestiev--Kusner--Rote--Springborn--Sullivan\footnote{The results in \cite{IKRSS} imply that types (B)-(D) are nonrealizable but do not apply to (A). } \cite{IKRSS}. 

In this paper we give a method of extending realizations of small degree ramification data to realizations of families of almost-regular ramification data. These imply that with the exception of the above four families, all families of genus $1$ almost-regular ramification data with some bounded error are realizable, in contrast with the genus 0 situation discussed above. 

\begin{theorem} 
\label{thm:existence} Let $T$ be a family of almost-regular genus $1$ ramification data of type $[k_1,\ldots,k_r]$, error at most $\eps$, where $T_i$ is not one of the above exceptional types (A)-(D) for $i\in\mathbb N$. 
Then all but finitely many members of $T$ are realizable if $\eps\leq 6$, or if $[k_1,\ldots,k_r]\in\{[2,2,2,2] [3,3,3]\}$ and $\eps\leq 10$. 
\end{theorem}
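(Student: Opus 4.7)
The plan is to combine a finite enumeration of admissible error patterns with a dessin-theoretic extension argument that propagates any single low-degree realization to arbitrarily large degree.

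\smallskip

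\noindent\textbf{Reduction to finitely many error patterns.}
Each $T_i$ has the shape $[A_1,k_1^*],\dots,[A_r,k_r^*]$ with $\sum_{j}\sum_{a\in A_j}a\le\eps$. Since the bound $\eps$ is absolute, only finitely many tuples $\mathcal A=(A_1,\dots,A_r)$ can arise across the family $T$. The Riemann--Hurwitz formula (with $g_E=1$) forces the additional constraint
\[
\sum_{j=1}^r\frac{1}{k_j}\sum_{a\in A_j}(a-k_j)\;=\;0,
\]
further pruning the admissible list. It therefore suffices to show, for each admissible $\mathcal A$ not of type (A)--(D), that almost every compatible degree $n$ (i.e.\ with $k_j\mid n-\sum_{a\in A_j}a$ for each $j$) admits a realization.

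\smallskip

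\noindent\textbf{Extension via dessin surgery.}
The regular cover of type $[k_1,\dots,k_r]$ is modelled by the Euclidean triangle or quadrilateral group tessellating $\CC$, with the torus obtained as a quotient by a translation lattice. Given a dessin $\mathcal D$ on the torus realizing some pattern $\mathcal A$ at degree $n$, I would choose a simple closed geodesic $\gamma$ on the torus avoiding the bounded set of faces supporting nontrivial entries of $\mathcal A$, cut $\mathcal D$ along $\gamma$, and glue in an annular strip tiled by fundamental domains of the regular tessellation. This yields a dessin realizing the same defect $\mathcal A$ at degree $n+p$, where the period $p$ depends only on the regular type (for instance $p=2$ for $[2,2,2,2]$ and $p=6$ for $[2,3,6]$). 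Iterating covers every sufficiently large degree congruent to $n$ modulo $p$; a finite collection of seed dessins, one per residue class, then covers every sufficiently large admissible degree.

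\smallskip

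\noindent\textbf{Base cases.}
For each admissible pattern $\mathcal A$ outside (A)--(D), I would exhibit an explicit low-degree dessin realizing it. The defect $\mathcal A$ perturbs the regular tessellation only in a bounded region, so the search is finite: one enumerates the possible local gluings of the $|A_1|+\dots+|A_r|$ singular cells into a neighborhood of an otherwise regular torus tiling. Many patterns are already realized by classical constructions, notably those of Pascali--Petronio \cite{PasPet} and the monodromy classifications \cite{GS,NZ,NZ2}; the remaining ones can be produced by direct construction or by a finite combinatorial search over gluing data.

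\smallskip

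\noindent\textbf{Main obstacle.}
The decisive difficulty is the scope of the base-case enumeration. For $\eps\le 6$ (respectively $\eps\le 10$ when $[k_1,\dots,k_r]\in\{[2,2,2,2],[3,3,3]\}$) the admissible list is substantial, and every non-exceptional entry must be certified by an explicit realization. The thresholds are likely sharp: beyond them, new obstructions in the spirit of (C), (D) are expected, so any uniform argument must contend with such sporadic nonrealizable families as soon as $\eps$ is allowed to grow. By contrast, the dessin-surgery step is uniform and transparent once the combinatorial model is in place.
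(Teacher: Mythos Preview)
Your overall strategy---enumerate the finitely many error patterns, realize each by a seed dessin, then propagate to all large degrees---matches the paper's. The differences lie in how the extension step is carried out and how the base cases are handled, and in the first of these your proposal has a genuine gap.

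\smallskip

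\noindent\textbf{The extension step.} Your annular-strip surgery increases the degree by the number of fundamental domains in the inserted strip, and that number is the length of the chosen closed geodesic on the \emph{seed} torus, not an invariant of the regular type. So the claimed period $p$ (``$p=2$ for $[2,2,2,2]$'', etc.) is not well-defined: different seeds yield different periods, and a single seed gives only one residue class modulo its own geodesic length. To cover all sufficiently large admissible degrees you would need, for each error pattern, a whole family of seeds with coprime geodesic lengths---something you have not arranged. The paper avoids this entirely: Lemma~\ref{notstrip} constructs a regular hexagonal (or square) tiling of the torus with \emph{any} prescribed number $n$ of tiles and, for $n$ large, containing a disk of arbitrarily large radius. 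Proposition~\ref{prop:localch1} then transplants the bounded set of changes from a \emph{single} seed into such a tiling. Thus one seed per pattern suffices.

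\smallskip

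\noindent\textbf{The base cases.} You propose to realize each admissible pattern ``by direct construction or by a finite combinatorial search over gluing data.'' But the nonrealizable types (A)--(D) show that this search can fail; nothing in your outline explains how to certify that the search terminates successfully for every remaining pattern, nor how large a region must be searched. The paper instead lists the $43$ admissible families explicitly, draws dessins for a core subset, and uses two reduction devices---composition with a fixed rational map (Lemmas~\ref{lemma:rattranslates} and~\ref{lem:glue2}) and the edge-addition move of Lemma~\ref{lemma:addlines}---to derive the rest from those already drawn. These reductions are what make the base-case verification tractable; without them the explicit casework roughly doubles. (Incidentally, the Pascali--Petronio results you cite concern genus~$0$ ramification data, not genus~$1$, so they do not supply seeds here.)

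\smallskip

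\noindent\textbf{A minor point.} Your remark that ``beyond [these thresholds] new obstructions in the spirit of (C), (D) are expected'' runs counter to the paper's Question~\ref{conj:existence}, which asks whether (A)--(D) are the \emph{only} nonrealizable almost-regular genus~$1$ families for arbitrary error.
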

%

Moreover, our method allows ``gluing" together realizations of almost-regular families of ramification data,  
into families with arbitrary amount of changes with bounded entries. For families of almost-regular ramification data of type $[2,2,2,2]$ this gives:
\begin{theorem}
\label{123exist}
Every family of almost-regular ramification data of genus $1$ of the form $[1^{k_1},3^{m_1},2^*]$ $[1^{k_2},3^{m_2},2^*]$ $[1^{k_3},3^{m_3},2^*]$ $[1^{k_4},3^{m_4},2^*]$ with fixed $k_i,m_i\in \NN\cup \{0\}$, $i=1,\ldots,4$, except for the type $[1,3,2^*]$ $[2^*] [2^*] [2^*]$ is realizable in all sufficiently large degrees. 
\end{theorem}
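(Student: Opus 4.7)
Writing $[1^{k_i}, 3^{m_i}, 2^{s_i}]$ for the partition at branch point $i$, combined with $s_i = (n - k_i - 3m_i)/2$, the Riemann--Hurwitz formula in genus~$1$ reduces to the necessary conditions $\sum_{i=1}^4 k_i = \sum_{i=1}^4 m_i$ and $n \equiv k_i + m_i \pmod{2}$ for every $i$ (so in particular the integers $k_i + m_i$ share a common parity). For fixed $(k_i, m_i)$ meeting these constraints, $n$ is confined to a single residue class modulo~$2$, and the goal is to realize every sufficiently large $n$ in this class.

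The plan is to construct, for each target $(k_i, m_i)$ other than the excluded type~(A), an explicit dessin d'enfant on the torus of some small degree $n_0 = n_0(k_i, m_i)$ realizing the required ramification, and then pad with copies of the regular $[2,2,2,2]$ square tile. Each such tile contributes $2$ to the degree, so by a standard Frobenius argument the padding produces realizations in every sufficiently large degree of the correct parity.

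The construction of the small-degree seed uses the gluing technique underlying Theorem~\ref{thm:existence}. Two basic realizable ``defect blocks'' play a central role: the error-$8$ realizations of $[1, 3, 2^*][1, 3, 2^*][2^*][2^*]$ and $[1^2, 3^2, 2^*][2^*][2^*][2^*]$ (and their permutations in the roles of the four branch points), each of which is provided by Theorem~\ref{thm:existence} since they fall within the error~$\leq 10$ regime for type $[2,2,2,2]$ and avoid types~(A)--(D). Each such block contains a small defect patch against a regular square-tiled background; two dessins sharing a regular $[2^*]$ boundary segment can be spliced, and the defect profile of the combined dessin is the sum of the two individual profiles. By iterating, I would decompose the target $(k_i, m_i)$ into a multiset of basic blocks and splice the corresponding realizations. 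Configurations such as $[1^3, 3^3, 2^*][2^*][2^*][2^*]$, whose error $12>10$ lies outside the direct reach of Theorem~\ref{thm:existence}, are handled by combining one $[1^2, 3^2]$ block and one $[1, 3][1, 3]$ block, then performing a local surgery that absorbs the auxiliary defect at the extra branch point using the torus flexibility provided by the dessin framework of Theorem~\ref{thm:existence}.

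The main obstacle, and the reason for the exception, is that when $(k_{i_0}, m_{i_0}) = (1, 1)$ at a single branch point $i_0$ and $(k_i, m_i) = (0, 0)$ otherwise---type~(A)---the target is a lone $(1,3)$-pair that fits into neither of the two basic blocks, and a direct realization would reduce precisely to type~(A), which is nonrealizable by Do--Zieve. In every other case a suitable decomposition exists: either at least two defects at a common branch point (giving an $[1^2, 3^2]$ block), or defects across at least two branch points (giving an $[1, 3][1, 3]$ block), possibly combined with the auxiliary absorbing surgery described above. The hardest technical step is verifying that the iterated splicings and absorbing surgeries can be performed in a globally consistent way on the torus without inadvertently producing a type~(A) subconfiguration, and this is where the dessin-theoretic bookkeeping of Theorem~\ref{thm:existence} does the essential work.
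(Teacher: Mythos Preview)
Your overall strategy---build a seed dessin by splicing realizable ``defect blocks'' and then pad with regular tiles via Proposition~\ref{prop:localch1}---matches the paper's approach in spirit. But the specific decomposition you propose has a genuine gap.

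Both of your basic blocks, $[1,3,2^*][1,3,2^*][2^*][2^*]$ and $[1^2,3^2,2^*][2^*][2^*][2^*]$, have the property that at every branch point the number of $1$'s equals the number of $3$'s. Any multiset sum of such blocks inherits this property, so your construction can only ever reach profiles with $k_i=m_i$ for each $i$. The theorem, however, must cover types such as $[1^3,2^*][2^*][2^*][3^3,2^*]$ (error $12$, so outside the direct reach of Theorem~\ref{thm:existence}), where $k_i\neq m_i$ at two branch points. Your ``absorbing surgery'' does not fix this: absorbing a stray $(1,3)$ pair at a single branch point is exactly the move that would interconvert a realizable type and type~(A), and the nonrealizability of~(A) shows that no such local surgery exists in general. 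So the claim that ``in every other case a suitable decomposition exists'' is false for your two-block toolkit.

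The paper avoids this by running a genuine induction on the error with base case the full range $\eps\le 10$ from Theorem~\ref{thm:existence}, and by allowing the split piece $T_1$ to be \emph{any} smaller genus-$1$ almost-regular type avoiding $[2^*]^4$ and type~(A), not just one of two fixed blocks. In particular the paper uses splits such as $T_1=[3^2,2^*][1^2,2^*][2^*][2^*]$ (case~(b)), $T_1=[1,2^*]^2[3,2^*]^2$ (case~(d)), and $T_1=[3^3,2^*][1,2^*]^3$ (case~(e)), none of which satisfy $k_i=m_i$ pointwise. Even with this flexibility, twelve sporadic profiles land on a forbidden $T_2$ and must be handled separately by explicit dessins or by the reduction Lemmas~\ref{lemma:rattranslates} and~\ref{lemma:addlines}; your proposal does not account for this case analysis either.
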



The Hurwitz problem can be viewed as combinatorial in nature, but the tools used in its study are diverse, coming from group theory, topology, and geometry. We use a mix of these techniques with a focus around {\it Dessins d'Enfant}. 
Given a ramification data $E$ of genus $g$ and three branch points,  a dessin for $E$ is a bicolored (nondirected) graph on a genus $g$ surface, where the multisets of valencies of white vertices,  black vertices, and faces, correspond to the three multisets of $E$, 
see Section \ref{sect:dessins} for the definition of the valency of a face, and the definition in the case of more than three branch points. The central example here is that of dessins corresponding to families of regular ramification data of types $[3,3,3], [2,4,4], [2,3,6]$ or $[2,2,2,2]$, which are regular tilings of the torus by hexagons or squares, called  {\it regular dessins}, cf.~Section \ref{loc_change}.  

It is well known  that the the realizability of a ramification data is equivalent to the existence of a dessin for that data. Thus by making an (absolutely) bounded number of changes to each member in a family of regular dessins $(C_i)_{i\in\mathbb N}$, such as adding or removing an edge or an isolated vertex, one obtains a family of dessins whose corresponding  ramification type is almost-regular. We call such families of dessins {\it almost-regular}. This transition from regular to almost-regular families of dessins is a special case of what will be referred to as {\emph{local changes}} later; see Definition \ref{def:close}.
To prove Theorem \ref{thm:existence}, we first realize a single ramification type $T_i$  as a dessin $C_i$  for $i \in \NN$. 
We then show that the changes made to obtain $C_i$ from a regular tiling, can also be made to a regular dessin with an arbitrary sufficiently large number of hexagons or squares, so that all $T_j$ of sufficiently large degree are realizable, and in fact by a family of almost-regular dessins. This yields a stronger version of Theorem \ref{thm:existence}, stated explicitly in Theorem \ref{thm:existence_strong}. 
To prove Theorem \ref{123exist} we show that the constructed almost-regular dessins can be glued together. 
%
This suggests that furthermore, the answer to the following question is positive: 
\begin{question} 
\label{conj:existence}
Is every family of almost-regular ramification data of genus $1$ different from types (A)-(D) above, with arbitrary error, 
realizable in all sufficiently large degrees\footnote{By the expression ``all degrees", we always mean ``all degrees which are compatible with the congruence conditions prescribed by the ramification type".}? 
Are such families realizable by  families of almost-regular dessins? 
\end{question}
Our proof of Theorem \ref{thm:existence} implies that the answer is positive for families with error $\eps\leq 6$ or families of type $[2,2,2,2]$ or $[3,3,3]$ with error $\eps\leq 10$. Theorem \ref{123exist} gives further evidence for a positive answer where $\eps$ is arbitrary but the entries are bounded. We note that our methods can be used to prove similar results for larger errors $\eps$ or larger uniform bounds on the entries but new methods are required to answer this question entirely. 
Similarly, we prove the following partial analogue for genus $0$ coverings: 
%
\begin{theorem}
\label{thm:localch0}
With the exception\footnote{The nonrealizability of $[2^*] [1 , 3^*] [ 2,2, 6^*]$ has already been proved by Zieve.} of $[2^*] [1 , 3^*] [2, 2, 6^*]$, every family of almost-regular ramification data of genus $0$ with $\varepsilon \leq 6$ is realizable in infinitely many degrees. 
Moreover,  for families 
of type $[2,2,2,2]$ or $[3,3,3]$, the same assertion holds with $\varepsilon \leq 10$. 
\end{theorem}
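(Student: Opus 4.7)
My plan is to parallel the arguments of Theorem \ref{thm:existence} and its strengthening Theorem \ref{thm:existence_strong}, now for dessins on the sphere rather than on the torus. Any almost-regular family whose degrees tend to infinity must satisfy $\sum_{j=1}^r(1-1/k_j)=2$ by Riemann--Hurwitz applied to (\ref{equ:RH}), so the type $[k_1,\ldots,k_r]$ is necessarily one of the Euclidean tilings $[2,2,2,2]$, $[3,3,3]$, $[2,4,4]$, or $[2,3,6]$. A direct computation using (\ref{equ:RH}) further shows that imposing genus $0$ (rather than the genus $1$ handled earlier) is equivalent to the curvature-type condition
$$\sum_{j=1}^r \sum_{a\in A_j}\left(1 - \frac{a}{k_j}\right) = 2$$
on the defects $A_j$. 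Combined with the bound $\varepsilon\leq 6$, or $\varepsilon\leq 10$ for the two special types, this leaves only a finite list of admissible families to realize.

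For each such family, except the known nonrealizable $[2^*][1,3^*][2,2,6^*]$, I would construct a seed dessin on $\mP^1$ of smallest feasible degree, chosen so that its defects $A_j$ sit within an otherwise locally regular patch of the corresponding Euclidean tiling (squares, triangles, or hexagons). To produce realizations of arbitrarily large degree, I would then apply the local-change mechanism of Definition \ref{def:close}: inside a locally regular region of the seed, insert additional rows or strips of the Euclidean tiling. Each insertion preserves all ramification at each branch point and leaves the genus unchanged, while enlarging $n$ by a fixed period. This yields realizations in a full arithmetic progression of degrees, and hence in infinitely many degrees, as required.

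The main obstacle will be the case-by-case verification that every admissible family admits a seed dessin with a suitable insertable patch, and that the only obstruction in the allowed error range is indeed $[2^*][1,3^*][2,2,6^*]$; the latter is nonrealizable by the result cited in the footnote, and the hard content is to show that no other family of type $[2,3,6]$ or otherwise produces a similar obstruction. For the types $[2,2,2,2]$ and $[3,3,3]$, the higher symmetry of the square and triangular lattices gives considerably more flexibility in placing defects, which is what permits relaxing the bound to $\varepsilon\leq 10$. The conclusion is weaker than ``all sufficiently large compatible degrees'' because a single insertion step advances $n$ only by the period of the inserted strip; strengthening it to cover every residue class would require gluing together distinct insertion mechanisms in the spirit of Theorem \ref{123exist}, which I do not pursue here.
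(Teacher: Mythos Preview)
Your outline matches the paper's approach closely: enumerate the finitely many admissible families via Riemann--Hurwitz, construct a seed dessin for each, and extend by enlarging a locally regular patch. The paper's extension mechanism is phrased as ``adding rings of hexagons around the middle,'' which is the spherical analog of your strip insertion.

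Two points where the paper goes further than your sketch. First, rather than drawing all sixty-odd families directly, the paper reduces roughly half of them to previously handled ones via Lemmas \ref{lemma:rattranslates} and \ref{lemma:addlines} (composition with a fixed genus-$0$ map, or adding edges at a chosen vertex), which substantially shortens the case analysis you flag as ``the main obstacle.'' Second, the paper does not simply cite the footnote for the exception $[2^*][1,3^*][2,2,6^*]$ but gives a short self-contained argument: by \cite[Lemma 9.1.1]{NZ} any such covering would factor through a degree-$2$ map ramified over the points with partitions $[2^*]$ and $[2,2,6^*]$, and then the partition $[1,3^*]$ cannot be split compatibly between the two preimages of the remaining branch point. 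Your plan defers both of these, so as written it is a sound strategy rather than a proof.
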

In genus $0$, we use the weaker notion of {\emph{quasi-local changes}}, which allows the error to grow within a family linearly with the degree, and replace 
families of regular dessins on the torus by families of \emph{regular spherical type}, which are quotients of regular dessins by symmetries of the torus, see Section \ref{sec:quasi-local}. 

The outline of the paper is as follows.  We first give some background information on dessins.  Next we explain our theory of almost-regular dessins and local changes. Then we prove the above results, first for genus 1, and then genus 0. \\
\\
Acknowledgements: 
We thank Nir Lazorovich for the help in proving Lemma \ref{notstrip} and Konstantin Golubyev for helpful discussions. We thank Michael Zieve for introducing the composition obstruction used in Conjecture \ref{conj:changes}, 
and other helpful discussions. This work is partially supported by the Israel Science Foundation (grant No. 577/15) and the United States-Israel Binational Science Foundation (grant No.~2014173). 

\section{Preliminaries}
\subsection{Dessins}
\label{sect:dessins}
We give a very brief introduction into standard results on coverings and dessins. For more, see e.g.\ \cite{LZ}.
Let $f:R\to \mathbb{P}^1$ be a degree  $n$ covering of compact connected Riemann surfaces with ordered branch point set $S:=\{p_1,\ldots,p_r\}$ and without loss of generality assume  $p_r=\infty$. 
Let $p_0\in \mathbb{P}^1\setminus S$. Choose paths $\gamma_i$ from $p_0$ to $p_i$, for $i=1,...,r-1$, ordered counter-clockwise. The union $\Gamma$ of  $\gamma_i$, $i=1,\ldots,r-1$ is called a star tree (cf.\ \cite[Section 6.1]{LZ}). A \emph{dessin} for $f$ is the inverse image $f^{-1}(\Gamma)$, viewed as a vertex-colored graph on $R$, with vertex set $f^{-1}(S\cup\{p_0\})$ and $f$ as a coloring function on the vertices, that is, we add the label $i$ to each vertex whose image under $f$ is $p_i$, for $i=1,\ldots,r-1$. 
We follow convention and leave out the color $p_0$ in drawings. 
Classically the term dessin is often restricted to the case $r=3$, with $S=\{0,1,\infty\}$ and $\Gamma=[0,1]$, described in the introduction. 
Two dessins $D_1$ and $D_2$ with the same branch point set, or more generally two colored graphs on surfaces $R_1$ and $R_2$, are {\em equivalent} if there exists an orientation-preserving homeomorphism $R_1\to R_2$ under which $D_1$ and $D_2$ are isomorphic as colored graphs. With this equivalence relation, the choice of paths from $p_0$ to $p_i$ does not matter.

The graph-theoretical information given by a dessin embedded in a surface $R$ encodes all ``relevant" information about the underlying covering $f$; in fact, this information, together with the branch point set, determines the covering up to isomorphism. In particular, the ramification type of $f$ can be recovered from the dessin. Namely, for any given branch point $p_i$, $i\in\{1,...,r-1\}$, the corresponding partition in the ramification type of $f$ is simply the multiset of vertex degrees of points in $f^{-1}(p_i)$ in the dessin. 
Similarly, for the last branch point $p_r$, vertex degrees should be replaced by degrees of faces in the dessin, where the degree of a face is defined as the number of vertices of any given label which are adjacent to the face. 
Note that every face has the same number of vertices of each label.
The Hurwitz existence problem is therefore equivalent to an existence problem of dessins with a given corresponding ramification type. 

We conclude by noting that the graph-theoretical structure of a dessin is of course independent of the position of the branch points in $\mathbb{P}^1$, and that choosing a different ordering of the branch point set $\{p_1,...,p_r\}$ used for the definition of a dessin corresponds to a duality of graphs (switching vertex colors, and possibly interchanging faces and vertices of a certain label. To answer existence questions, we may choose any ordering of the partitions in a given ramification type.

\subsection{almost-regular families of dessins and local changes\label{loc_change}}
A \emph{regular ramification data} is a ramification data containing only a single entry, up to multiplicity, in each multiset.

A {\it regular dessin} is a dessin whose ramification type is regular. In particular, any dessin corresponding to a Galois covering is a regular dessin.
The Riemann--Hurwitz formula implies that there are four genus $1$ regular ramification data:   
$[2^*] [4^*]^2$; 
$[2^*] [3^*] [6^*]$; 
$[2^*]^4$;
and 
$[3^*]^3$, where the exponent denotes the number of occurrences of a tuple in the ramification data. 
These are realized in figure \ref{standard} below as \emph{regular tilings on a torus}, that is, as a quotient of one of the regular tilings of the plane with specified colorings.  Here we identify the top and bottom, as well as left and right sides of the square to get a torus in the usual way. We note that \cite[Proposition 5.1.1]{GS} implies that every dessin with regular ramification type\footnote{In \cite{GS}, two dessins, or more generally normal graphs, with the same ramification type are called combinatorially equivalent and for regular ramification data this is shown to imply topological equivalence} is equivalent to a tiling of $R$ by regular hexagons.

 %


\begin{figure}[h!] 
 \centering
     \begin{subfigure}[t]{.6\textwidth}
     \centering
    \begin{subfigure}[t]{.3\textwidth}
          \begin{tikzpicture}[scale=.58]
 \draw (-4,-4) coordinate(A1) -- (-4,4) coordinate(B1) ; 
 \draw (-2,-4) coordinate(A2) -- (-2,4)  coordinate(B2) ; 
 \draw (0,-4) coordinate(A3) -- (0,4) coordinate(B3)  ; 
 \draw (2,-4) coordinate(A4)  -- (2,4) coordinate(B4)  ; 
 \draw (4,-4) coordinate(A5) -- (4,4) coordinate(B5) ; 
 \draw (-4,-4) coordinate(A1) -- (4,-4) coordinate(A5)  ; 
\draw (-4,-2) coordinate (B6)  -- (4,-2) coordinate(A6) ; 
 \draw (-4,0) coordinate (B7) -- (4,0) coordinate(A7)  ; 
 \draw (-4,2) coordinate (B8) -- (4,2) coordinate(A8) ; 
 \draw (-4,4) coordinate (B1) -- (4,4) coordinate (B5) ; 
 
  \fill[red] (intersection of A1--B1 and B1--B5) circle (4pt); 
  \fill[red] (intersection of A2--B2 and B1--B5) circle (4pt); 
   \fill[red] (intersection of A3--B3 and B1--B5) circle (4pt); 
  \fill[red] (intersection of A4--B4 and B1--B5) circle (4pt); 
   \fill[red] (intersection of A5--B5 and B1--B5) circle (4pt); 
   
  \fill[red] (intersection of A1--B1 and B8--A8) circle (4pt); 
  \fill[red] (intersection of A2--B2 and B8--A8) circle (4pt); 
   \fill[red] (intersection of A3--B3 and B8--A8) circle (4pt); 
  \fill[red] (intersection of A4--B4 and B8--A8) circle (4pt); 
   \fill[red] (intersection of A5--B5 and B8--A8) circle (4pt);    
   
   \fill[red] (intersection of A1--B1 and B7--A7) circle (4pt); 
  \fill[red] (intersection of A2--B2 and B7--A7) circle (4pt); 
   \fill[red] (intersection of A3--B3 and B7--A7) circle (4pt); 
  \fill[red] (intersection of A4--B4 and B7--A7) circle (4pt); 
   \fill[red] (intersection of A5--B5 and B7--A7) circle (4pt);   

  \fill[red] (intersection of A1--B1 and B6--A6) circle (4pt); 
  \fill[red] (intersection of A2--B2 and B6--A6) circle (4pt); 
   \fill[red] (intersection of A3--B3 and B6--A6) circle (4pt); 
  \fill[red] (intersection of A4--B4 and B6--A6) circle (4pt); 
   \fill[red] (intersection of A5--B5 and B6--A6) circle (4pt);      
   
   \fill[red] (intersection of A1--B1 and A1--A5) circle (4pt); 
  \fill[red] (intersection of A2--B2 and A1--A5) circle (4pt); 
   \fill[red] (intersection of A3--B3 and A1--A5) circle (4pt); 
  \fill[red] (intersection of A4--B4 and A1--A5) circle (4pt); 
   \fill[red] (intersection of A5--B5 and A1--A5) circle (4pt);  
   
  \fill[blue] (intersection of B1--B5 and -3,-4-- -3,4) circle (4pt);  
   \fill[blue] (intersection of B1--B5 and -1,-4-- -1,4) circle (4pt); 
  \fill[blue] (intersection of B1--B5 and 1,-4-- 1,4) circle (4pt); 
   \fill[blue] (intersection of B1--B5 and 3,-4-- 3,4) circle (4pt); 
   
  \fill[blue] (intersection of B8--A8 and -3,-4-- -3,4) circle (4pt);  
   \fill[blue] (intersection of B8--A8 and -1,-4-- -1,4) circle (4pt); 
  \fill[blue] (intersection of B8--A8 and 1,-4-- 1,4) circle (4pt); 
   \fill[blue] (intersection of B8--A8 and 3,-4-- 3,4) circle (4pt);    
   
   \fill[blue] (intersection of B7--A7 and -3,-4-- -3,4) circle (4pt);  
   \fill[blue] (intersection of B7--A7 and -1,-4-- -1,4) circle (4pt); 
  \fill[blue] (intersection of B7--A7 and 1,-4-- 1,4) circle (4pt); 
   \fill[blue] (intersection of B7--A7 and 3,-4-- 3,4) circle (4pt);  
   
   \fill[blue] (intersection of B6--A6 and -3,-4-- -3,4) circle (4pt);  
   \fill[blue] (intersection of B6--A6 and -1,-4-- -1,4) circle (4pt); 
  \fill[blue] (intersection of B6--A6 and 1,-4-- 1,4) circle (4pt); 
   \fill[blue] (intersection of B6--A6 and 3,-4-- 3,4) circle (4pt);    

  \fill[blue] (intersection of A1--A5 and -3,-4-- -3,4) circle (4pt);  
   \fill[blue] (intersection of A1--A5 and -1,-4-- -1,4) circle (4pt); 
  \fill[blue] (intersection of A1--A5 and 1,-4-- 1,4) circle (4pt); 
   \fill[blue] (intersection of A1--A5 and 3,-4-- 3,4) circle (4pt); 
   
    \fill[blue] (intersection of A1--B1 and -4,3-- 4,3) circle (4pt); 
   \fill[blue] (intersection of A2--B2 and -4,3-- 4,3) circle (4pt);  
     \fill[blue] (intersection of A3--B3 and -4,3-- 4,3) circle (4pt);
      \fill[blue] (intersection of A4--B4 and -4,3-- 4,3) circle (4pt);      
      \fill[blue] (intersection of A5--B5 and -4,3-- 4,3) circle (4pt); 
      
      \fill[blue] (intersection of A1--B1 and -4,1-- 4,1) circle (4pt); 
   \fill[blue] (intersection of A2--B2 and -4,1-- 4,1) circle (4pt);  
     \fill[blue] (intersection of A3--B3 and -4,1-- 4,1) circle (4pt);
      \fill[blue] (intersection of A4--B4 and -4,1-- 4,1) circle (4pt);      
      \fill[blue] (intersection of A5--B5 and -4,1-- 4,1) circle (4pt); 
      
        \fill[blue] (intersection of A1--B1 and -4,-1-- 4,-1) circle (4pt); 
   \fill[blue] (intersection of A2--B2 and -4,-1-- 4,-1) circle (4pt);  
     \fill[blue] (intersection of A3--B3 and -4,-1-- 4,-1) circle (4pt);
      \fill[blue] (intersection of A4--B4 and -4,-1-- 4,-1) circle (4pt);      
      \fill[blue] (intersection of A5--B5 and -4,-1-- 4,-1) circle (4pt); 
      
       \fill[blue] (intersection of A1--B1 and -4,-3-- 4,-3) circle (4pt); 
   \fill[blue] (intersection of A2--B2 and -4,-3-- 4,-3) circle (4pt);  
     \fill[blue] (intersection of A3--B3 and -4,-3-- 4,-3) circle (4pt);
      \fill[blue] (intersection of A4--B4 and -4,-3-- 4,-3) circle (4pt);      
      \fill[blue] (intersection of A5--B5 and -4,-3-- 4,-3) circle (4pt);
   
\end{tikzpicture} 
         \caption{$[2^{32}][4^{16}][4^{16}]$}
          \label{fig:A}
          \end{subfigure}
     \hfill
 \begin{subfigure}[t]{.3\textwidth}
       \begin{tikzpicture}[scale=.4]
\draw[dashed] (-6,-6) --(6,-6) ; 
\draw[dashed] (-6,6) --(6,6); 
\draw[dashed](-6,-6)--(-6,6); 
\draw[dashed](6,-6)--(6,6);

\draw (-6,5) -- (-4,6); 
\draw(-6,3) -- (-6,5); 
\draw (-4,6) -- (intersection of -4,6--6,1 and -6,3--0,6); 
\draw (-6,3) -- (intersection of -6,1--4,6 and -6,3--6,-3); 
\draw (intersection of -6,1--4,6 and -6,3--6,-3)--(intersection of -6,1--4,6 and -6,5--6,-1); 
\draw (intersection of -6,1--4,6 and -6,5--6,-1)-- (intersection of -4,6--6,1 and -6,3--0,6);
\draw(intersection of -4,6--6,1 and -6,3--0,6)--(intersection of 0,6--6,3 and -6,3--0,6);
\draw (0,6)--(intersection of -0,6--6,3 and -6,1--4,6);
\draw (intersection of 0,6--6,3 and -6,1--4,6)-- (4,6); 
\draw (4,6) --(6,5); 
\draw(6,5) --(6,3); 
\draw (2,5) -- (2,3); 
\draw (-2,3) -- (intersection of -6,5--6,-1 and -6,-1--6,5); 
\draw (2,3) -- (intersection of -6,5--6,-1 and -6,-1--6,5); 
\draw(2,3)-- (intersection of -4,6--6,1 and 6,3---6,-3); 
\draw(6,3)-- (intersection of -4,6--6,1 and 6,3---6,-3); 
\draw (intersection of -6,3--6,-3 and -6,1--4,6) -- (intersection of -6,-1--6,5 and -6,1--6,-5); 
\draw( intersection of -6,5--6,-1 and -6,-1--6,5) --(intersection of -6,-3--6,3 and -6,3--6,-3);
\draw (intersection of -4,6--6,1 and 6,3---6,-3)--(intersection of -6,-5--6,1 and -6,5--6,-1);
\draw (-6,-1)--(intersection of -6,-1--6,5 and -6,1--6,-5);
\draw (intersection of -6,-1--6,5 and -6,1--6,-5)--(-2,-1);
\draw(-2,-1) -- (intersection of -6,-3--6,3 and -6,3--6,-3);
\draw(2,-1) -- (intersection of -6,-3--6,3 and -6,3--6,-3);
\draw (2,-1) -- (intersection of -6,-5--6,1 and -6,5--6,-1);
\draw (intersection of -6,-5--6,1 and -6,5--6,-1)-- (6,-1);
\draw (-6,-1) -- (-6,-3);
\draw (intersection of -6,1--6,-5 and -6,-3--6,3) -- (intersection of -6,-5--6,1 and -6,-1--4,-6);
\draw(2,-1)--(2,-3);
\draw (6, -1) -- (6,-3); 
\draw (-6,-3) -- (-4,-4);
\draw (-4,-4)--(-2,-3);
\draw (-2,-3) --(0,-4);
\draw(0,-4) --(2,-3);
\draw (2,-3) --(4,-4);
\draw (4,-4) --(6,-3);
\draw (-4,-4) --(-4,-6);
\draw (0,-4) --(0,-6) ; 
\draw (4,-4) -- (4, -6); 

\fill[blue] (-4,6) circle (4pt);
\fill[blue] (0,6) circle (4pt);
\fill[blue] (4,6) circle (4pt);
\fill[blue] (-6,3) circle (4pt);
\fill[blue] (-2,3) circle (4pt);
\fill[blue] (2,3) circle (4pt);
\fill[blue] (6,3) circle (4pt);
\fill[blue] (-4,0) circle (4pt);
\fill[blue] (0,0) circle (4pt);
\fill[blue] (4,0) circle (4pt);
\fill[blue] (-6,-3) circle (4pt);
\fill[blue] (-2,-3) circle (4pt);
\fill[blue] (2,-3) circle (4pt);
\fill[blue] (6,-3) circle (4pt);
\fill[blue] (-6,5) circle (4pt);
\fill[blue] (-2,5) circle (4pt);
\fill[blue] (2,5) circle (4pt);
\fill[blue] (6,5) circle (4pt);
\fill[blue] (-4,2) circle (4pt);
\fill[blue] (0,2) circle (4pt);
\fill[blue] (4,2) circle (4pt);
\fill[blue] (-6,-1) circle (4pt);
\fill[blue] (-2,-1) circle (4pt);
\fill[blue] (2,-1) circle (4pt);
\fill[blue] (6,-1) circle (4pt);
\fill[blue] (-4,-4) circle (4pt);
\fill[blue] (0,-4) circle (4pt);
\fill[blue] (4,-4) circle (4pt);

\fill[red] (-6,4) circle (4pt);
\fill[red] (-2,4) circle (4pt);
\fill[red] (2,4) circle (4pt);
\fill[red] (6,4) circle (4pt);
\fill[red] (-4,1) circle (4pt);
\fill[red] (0,1) circle (4pt);
\fill[red] (4,1) circle (4pt);
\fill[red] (-6,-2) circle (4pt);
\fill[red] (-2,-2) circle (4pt);
\fill[red] (2,-2) circle (4pt);
\fill[red] (6,-2) circle (4pt);
\fill[red] (-4,-5) circle (4pt);
\fill[red] (0,-5) circle (4pt);
\fill[red] (4,-5) circle (4pt);
\fill[red] (-5,5.5) circle (4pt);
\fill[red] (-1,5.5) circle (4pt);
\fill[red] (3,5.5) circle (4pt);
\fill[red] (-3,2.5) circle (4pt);
\fill[red] (1,2.5) circle (4pt);
\fill[red] (5,2.5) circle (4pt);
\fill[red] (-5,-.5) circle (4pt);
\fill[red] (-1,-.5) circle (4pt);
\fill[red] (3,-.5) circle (4pt);
\fill[red] (-3,-3.5) circle (4pt);
\fill[red] (1,-3.5) circle (4pt);
\fill[red] (5,-3.5) circle (4pt);
\fill[red] (-3,5.5) circle (4pt);
\fill[red] (1,5.5) circle (4pt);
\fill[red] (5,5.5) circle (4pt);
\fill[red] (-5,2.5) circle (4pt);
\fill[red] (-1,2.5) circle (4pt);
\fill[red] (3,2.5) circle (4pt);
\fill[red] (-3,-.5) circle (4pt);
\fill[red] (1,-.5) circle (4pt);
\fill[red] (5,-.5) circle (4pt);
\fill[red] (-5,-3.5) circle (4pt);
\fill[red] (-1,-3.5) circle (4pt);
\fill[red] (3,-3.5) circle (4pt);

\end{tikzpicture} 
     \caption{$[2^{36}][3^{24}][6^{12}]$}
          \label{fig:B}
          \end{subfigure} 
\end{subfigure}

\centering
   \begin{subfigure}[t]{.6\textwidth}
    \begin{subfigure}[t]{0.3\textwidth}
          \begin{tikzpicture}[scale=.4]
\draw[dashed] (-6,-6) --(6,-6) ; 
\draw[dashed] (-6,6) --(6,6); 
\draw[dashed](-6,-6)--(-6,6); 
\draw[dashed](6,-6)--(6,6);

\draw (-6,5) -- (-4,6); 
\draw(-6,3) -- (-6,5); 
\draw (-4,6) -- (intersection of -4,6--6,1 and -6,3--0,6); 
\draw (-6,3) -- (intersection of -6,1--4,6 and -6,3--6,-3); 
\draw (intersection of -6,1--4,6 and -6,3--6,-3)--(intersection of -6,1--4,6 and -6,5--6,-1); 
\draw (intersection of -6,1--4,6 and -6,5--6,-1)-- (intersection of -4,6--6,1 and -6,3--0,6);
\draw(intersection of -4,6--6,1 and -6,3--0,6)--(intersection of 0,6--6,3 and -6,3--0,6);
\draw (0,6)--(intersection of -0,6--6,3 and -6,1--4,6);
\draw (intersection of 0,6--6,3 and -6,1--4,6)-- (4,6); 
\draw (4,6) --(6,5); 
\draw(6,5) --(6,3); 
\draw (2,5) -- (2,3); 
\draw (-2,3) -- (intersection of -6,5--6,-1 and -6,-1--6,5); 
\draw (2,3) -- (intersection of -6,5--6,-1 and -6,-1--6,5); 
\draw(2,3)-- (intersection of -4,6--6,1 and 6,3---6,-3); 
\draw(6,3)-- (intersection of -4,6--6,1 and 6,3---6,-3); 
\draw (intersection of -6,3--6,-3 and -6,1--4,6) -- (intersection of -6,-1--6,5 and -6,1--6,-5); 
\draw( intersection of -6,5--6,-1 and -6,-1--6,5) --(intersection of -6,-3--6,3 and -6,3--6,-3);
\draw (intersection of -4,6--6,1 and 6,3---6,-3)--(intersection of -6,-5--6,1 and -6,5--6,-1);
\draw (-6,-1)--(intersection of -6,-1--6,5 and -6,1--6,-5);
\draw (intersection of -6,-1--6,5 and -6,1--6,-5)--(-2,-1);
\draw(-2,-1) -- (intersection of -6,-3--6,3 and -6,3--6,-3);
\draw(2,-1) -- (intersection of -6,-3--6,3 and -6,3--6,-3);
\draw (2,-1) -- (intersection of -6,-5--6,1 and -6,5--6,-1);
\draw (intersection of -6,-5--6,1 and -6,5--6,-1)-- (6,-1);
\draw (-6,-1) -- (-6,-3);
\draw (intersection of -6,1--6,-5 and -6,-3--6,3) -- (intersection of -6,-5--6,1 and -6,-1--4,-6);
\draw(2,-1)--(2,-3);
\draw (6, -1) -- (6,-3); 
\draw (-6,-3) -- (-4,-4);
\draw (-4,-4)--(-2,-3);
\draw (-2,-3) --(0,-4);
\draw(0,-4) --(2,-3);
\draw (2,-3) --(4,-4);
\draw (4,-4) --(6,-3);
\draw (-4,-4) --(-4,-6);
\draw (0,-4) --(0,-6) ; 
\draw (4,-4) -- (4, -6); 

\fill[red] (-6,4) circle (4pt);
\fill[red] (-2,4) circle (4pt);
\fill[red] (2,4) circle (4pt);
\fill[red] (6,4) circle (4pt);
\fill[red] (-4,1) circle (4pt);
\fill[red] (0,1) circle (4pt);
\fill[red] (4,1) circle (4pt);
\fill[red] (-6,-2) circle (4pt);
\fill[red] (-2,-2) circle (4pt);
\fill[red] (2,-2) circle (4pt);
\fill[red] (6,-2) circle (4pt);
\fill[red] (-4,-5) circle (4pt);
\fill[red] (0,-5) circle (4pt);
\fill[red] (4,-5) circle (4pt);

\fill[blue] (-5,5.5) circle (4pt);
\fill[blue] (-1,5.5) circle (4pt);
\fill[blue] (3,5.5) circle (4pt);
\fill[blue] (-3,2.5) circle (4pt);
\fill[blue] (1,2.5) circle (4pt);
\fill[blue] (5,2.5) circle (4pt);
\fill[blue] (-5,-.5) circle (4pt);
\fill[blue] (-1,-.5) circle (4pt);
\fill[blue] (3,-.5) circle (4pt);
\fill[blue] (-3,-3.5) circle (4pt);
\fill[blue] (1,-3.5) circle (4pt);
\fill[blue] (5,-3.5) circle (4pt);

\fill[green] (-3,5.5) circle (4pt);
\fill[green] (1,5.5) circle (4pt);
\fill[green] (5,5.5) circle (4pt);
\fill[green] (-5,2.5) circle (4pt);
\fill[green] (-1,2.5) circle (4pt);
\fill[green] (3,2.5) circle (4pt);
\fill[green] (-3,-.5) circle (4pt);
\fill[green] (1,-.5) circle (4pt);
\fill[green] (5,-.5) circle (4pt);
\fill[green] (-5,-3.5) circle (4pt);
\fill[green] (-1,-3.5) circle (4pt);
\fill[green] (3,-3.5) circle (4pt);
\end{tikzpicture} 
           
          \caption{$[2^{12}][2^{12}][2^{12}][2^{12}]$}
          \label{fig:C}
\end{subfigure} 
\hfill
 \begin{subfigure}[t]{0.3\textwidth}
 \begin{tikzpicture}[scale=.4]
\draw[dashed] (-6,-6) --(6,-6) ; 
\draw[dashed] (-6,6) --(6,6); 
\draw[dashed](-6,-6)--(-6,6); 
\draw[dashed](6,-6)--(6,6);

\draw (-6,5) -- (-4,6); 
\draw(-6,3) -- (-6,5); 
\draw (-4,6) -- (intersection of -4,6--6,1 and -6,3--0,6); 
\draw (-6,3) -- (intersection of -6,1--4,6 and -6,3--6,-3); 
\draw (intersection of -6,1--4,6 and -6,3--6,-3)--(intersection of -6,1--4,6 and -6,5--6,-1); 
\draw (intersection of -6,1--4,6 and -6,5--6,-1)-- (intersection of -4,6--6,1 and -6,3--0,6);
\draw(intersection of -4,6--6,1 and -6,3--0,6)--(intersection of 0,6--6,3 and -6,3--0,6);
\draw (0,6)--(intersection of -0,6--6,3 and -6,1--4,6);
\draw (intersection of 0,6--6,3 and -6,1--4,6)-- (4,6); 
\draw (4,6) --(6,5); 
\draw(6,5) --(6,3); 
\draw (2,5) -- (2,3); 
\draw (-2,3) -- (intersection of -6,5--6,-1 and -6,-1--6,5); 
\draw (2,3) -- (intersection of -6,5--6,-1 and -6,-1--6,5); 
\draw(2,3)-- (intersection of -4,6--6,1 and 6,3---6,-3); 
\draw(6,3)-- (intersection of -4,6--6,1 and 6,3---6,-3); 
\draw (intersection of -6,3--6,-3 and -6,1--4,6) -- (intersection of -6,-1--6,5 and -6,1--6,-5); 
\draw( intersection of -6,5--6,-1 and -6,-1--6,5) --(intersection of -6,-3--6,3 and -6,3--6,-3);
\draw (intersection of -4,6--6,1 and 6,3---6,-3)--(intersection of -6,-5--6,1 and -6,5--6,-1);
\draw (-6,-1)--(intersection of -6,-1--6,5 and -6,1--6,-5);
\draw (intersection of -6,-1--6,5 and -6,1--6,-5)--(-2,-1);
\draw(-2,-1) -- (intersection of -6,-3--6,3 and -6,3--6,-3);
\draw(2,-1) -- (intersection of -6,-3--6,3 and -6,3--6,-3);
\draw (2,-1) -- (intersection of -6,-5--6,1 and -6,5--6,-1);
\draw (intersection of -6,-5--6,1 and -6,5--6,-1)-- (6,-1);
\draw (-6,-1) -- (-6,-3);
\draw (intersection of -6,1--6,-5 and -6,-3--6,3) -- (intersection of -6,-5--6,1 and -6,-1--4,-6);
\draw(2,-1)--(2,-3);
\draw (6, -1) -- (6,-3); 
\draw (-6,-3) -- (-4,-4);
\draw (-4,-4)--(-2,-3);
\draw (-2,-3) --(0,-4);
\draw(0,-4) --(2,-3);
\draw (2,-3) --(4,-4);
\draw (4,-4) --(6,-3);
\draw (-4,-4) --(-4,-6);
\draw (0,-4) --(0,-6) ; 
\draw (4,-4) -- (4, -6); 

\fill[blue] (-4,6) circle (4pt);
\fill[blue] (0,6) circle (4pt);
\fill[blue] (4,6) circle (4pt);
\fill[blue] (-6,3) circle (4pt);
\fill[blue] (-2,3) circle (4pt);
\fill[blue] (2,3) circle (4pt);
\fill[blue] (6,3) circle (4pt);
\fill[blue] (-4,0) circle (4pt);
\fill[blue] (0,0) circle (4pt);
\fill[blue] (4,0) circle (4pt);
\fill[blue] (-6,-3) circle (4pt);
\fill[blue] (-2,-3) circle (4pt);
\fill[blue] (2,-3) circle (4pt);
\fill[blue] (6,-3) circle (4pt);

\fill[red] (-6,5) circle (4pt);
\fill[red] (-2,5) circle (4pt);
\fill[red] (2,5) circle (4pt);
\fill[red] (6,5) circle (4pt);
\fill[red] (-4,2) circle (4pt);
\fill[red] (0,2) circle (4pt);
\fill[red] (4,2) circle (4pt);
\fill[red] (-6,-1) circle (4pt);
\fill[red] (-2,-1) circle (4pt);
\fill[red] (2,-1) circle (4pt);
\fill[red] (6,-1) circle (4pt);
\fill[red] (-4,-4) circle (4pt);
\fill[red] (0,-4) circle (4pt);
\fill[red] (4,-4) circle (4pt);
\end{tikzpicture} 
 \caption{$[3^{12}][3^{12}][3^{12}]$} 
 \label{fig:D}
 \end{subfigure}
 \end{subfigure}
 
 \caption{The 4 regular types of dessins on a torus}    
 \label{standard}    
 \end{figure}
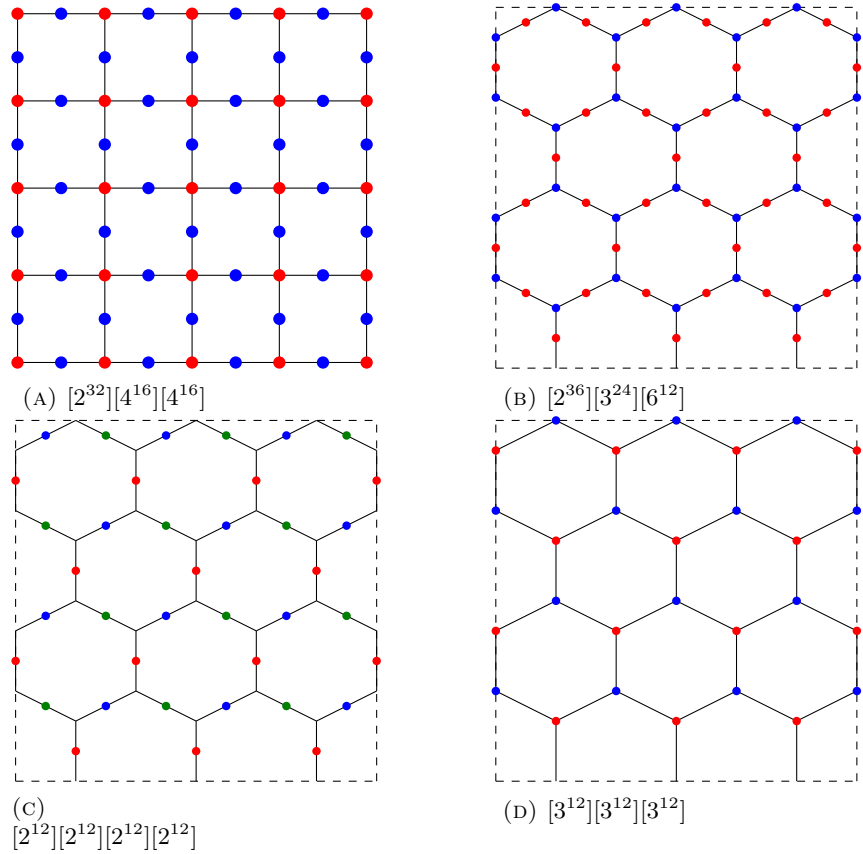
 
 
%
%


As in Section \ref{sec:intro}, we consider only almost-regular families $(T_i)_{i\in \NN}$ whose irregular entries are the same for all $i$, and the higher degree members of this family are obtained by adding 
regular entries.
\begin{remark}\label{rem:general-almost} 
The results in this paper hold more general for the following definition of almost-regular ramification data. Call a family $T = (T_i)_{i\in\mathbb N}$ of ramification data with $r$ branch points of {\it (generalized) almost-regular of type} $[k_1,\ldots,k_r]$ 
{\it of error at most $\eps$}, 
if $\deg T_i\ra\infty$ as $i\ra\infty$ and  the sum of errors $\sum_{j=1}^r\eps_{i,j}$ is at most $\eps$ for every $i\in\mathbb N$, where the {\it error} $\eps_{i,j}$ is the sum of (``irregular") entries in $T_{i,j}$ different from $k_j$ for $j=1,\ldots,r$. 
 There is no loss of generality in restricting to our definition, since for a given $\eps$, the number of almost-regular ramification types of a fixed degree with error at most $\eps$ is bounded by a constant independent of the degree of the ramification type. We may then restrict to suitable subfamilies all of whose members have the same irregular entries. 
\end{remark} 


A \emph{change to a dessin} consists of adding or removing an edge (connecting a preimage of some branch point to a preimage of the base point) or an isolated vertex of any color. 

\begin{definition}[Local Changes to Families of Dessins]
\label{def:close}
Let $(C_i)_{i\in \NN}$ and $(D_i)_{i\in \NN}$ be two families of dessins on a surface $R$, all with the same set of branch points. 
 We say that $(C_i)_i$ is realizable as \emph{local changes} to $(D_i)_i$ if there exists a constant $c\in \NN$ such that for all $i\in \NN$, applying at most $c$ changes to $C_i$ gives a dessin equivalent to $D_i$. 
%
If in addition $(D_i)_i$ is a family of regular dessins, we call $(C_i)_i$ a family of \emph{almost-regular} dessins.
\end{definition}
Figure \ref{[2^*][2^*][1,1,2^*][4,2^*]} shows a family of dessins realizable as local changes to $[2^*][2^*][2^*][2^*]$. 
\begin{figure}[h!]
\begin{tikzpicture}[scale=.5]
\draw[dashed] (-6,-6) --(6,-6) ; 
\draw[dashed] (-6,6) --(6,6); 
\draw[dashed](-6,-6)--(-6,6); 
\draw[dashed](6,-6)--(6,6);

\draw (-6,5) -- (-4,6); 
\draw(-6,3) -- (-6,5); 
\draw (-4,6) -- (intersection of -4,6--6,1 and -6,3--0,6); 
\draw (-6,3) -- (intersection of -6,1--4,6 and -6,3--6,-3); 
\draw (intersection of -6,1--4,6 and -6,3--6,-3)--(intersection of -6,1--4,6 and -6,5--6,-1); 
\draw (intersection of -6,1--4,6 and -6,5--6,-1)-- (intersection of -4,6--6,1 and -6,3--0,6);
\draw(intersection of -4,6--6,1 and -6,3--0,6)--(intersection of 0,6--6,3 and -6,3--0,6);
\draw (0,6)--(intersection of -0,6--6,3 and -6,1--4,6);
\draw (intersection of 0,6--6,3 and -6,1--4,6)-- (4,6); 
\draw (4,6) --(6,5); 
\draw(6,5) --(6,3); 
\draw (2,5) -- (2,3); 
\draw (-2,3) -- (intersection of -6,5--6,-1 and -6,-1--6,5); 
\draw (2,3) -- (intersection of -6,5--6,-1 and -6,-1--6,5); 
\draw(2,3)-- (intersection of -4,6--6,1 and 6,3---6,-3); 
\draw(6,3)-- (intersection of -4,6--6,1 and 6,3---6,-3); 
\draw (intersection of -6,3--6,-3 and -6,1--4,6) -- (intersection of -6,-1--6,5 and -6,1--6,-5); 
\draw (intersection of -4,6--6,1 and 6,3---6,-3)--(intersection of -6,-5--6,1 and -6,5--6,-1);
\draw (-6,-1)--(intersection of -6,-1--6,5 and -6,1--6,-5);
\draw (intersection of -6,-1--6,5 and -6,1--6,-5)--(-2,-1);
\draw(-2,-1) -- (intersection of -6,-3--6,3 and -6,3--6,-3);
\draw(2,-1) -- (intersection of -6,-3--6,3 and -6,3--6,-3);
\draw (2,-1) -- (intersection of -6,-5--6,1 and -6,5--6,-1);
\draw (intersection of -6,-5--6,1 and -6,5--6,-1)-- (6,-1);
\draw (-6,-1) -- (-6,-3);
\draw (intersection of -6,1--6,-5 and -6,-3--6,3) -- (intersection of -6,-5--6,1 and -6,-1--4,-6);
\draw(2,-1)--(2,-3);
\draw (6, -1) -- (6,-3); 
\draw (-6,-3) -- (-4,-4);
\draw (-4,-4)--(-2,-3);
\draw (-2,-3) --(0,-4);
\draw(0,-4) --(2,-3);
\draw (2,-3) --(4,-4);
\draw (4,-4) --(6,-3);
\draw (-4,-4) --(-4,-6);
\draw (0,-4) --(0,-6) ; 
\draw (4,-4) -- (4, -6); 

\fill[red] (-6,4) circle (4pt);
\fill[red] (-2,4) circle (4pt);
\fill[red] (2,4) circle (4pt);
\fill[red] (6,4) circle (4pt);
\fill[red] (-4,1) circle (4pt);
\fill[red] (4,1) circle (4pt);
\fill[red] (-6,-2) circle (4pt);
\fill[red] (-2,-2) circle (4pt);
\fill[red] (2,-2) circle (4pt);
\fill[red] (6,-2) circle (4pt);
\fill[red] (-4,-5) circle (4pt);
\fill[red] (0,-5) circle (4pt);
\fill[red] (4,-5) circle (4pt);

\fill[blue] (-5,5.5) circle (4pt);
\fill[blue] (-1,5.5) circle (4pt);
\fill[blue] (3,5.5) circle (4pt);
\fill[blue] (-3,2.5) circle (4pt);
\fill[blue] (1,2.5) circle (4pt);
\fill[blue] (5,2.5) circle (4pt);
\fill[blue] (-5,-.5) circle (4pt);
\fill[blue] (-1,-.5) circle (4pt);
\fill[blue] (3,-.5) circle (4pt);
\fill[blue] (-3,-3.5) circle (4pt);
\fill[blue] (1,-3.5) circle (4pt);
\fill[blue] (5,-3.5) circle (4pt);

\fill[green] (-3,5.5) circle (4pt);
\fill[green] (1,5.5) circle (4pt);
\fill[green] (5,5.5) circle (4pt);
\fill[green] (-5,2.5) circle (4pt);
\fill[green] (-1,2.5) circle (4pt);
\fill[green] (3,2.5) circle (4pt);
\fill[green] (-3,-.5) circle (4pt);
\fill[green] (1,-.5) circle (4pt);
\fill[green] (5,-.5) circle (4pt);
\fill[green] (-5,-3.5) circle (4pt);
\fill[green] (-1,-3.5) circle (4pt);
\fill[green] (3,-3.5) circle (4pt);

\draw(0,2) --(-2,1) ; 
\draw (0,0)-- (2,1); 
\fill[red] (-2,1) circle (4pt);
\fill[red] (2,1) circle (4pt);
\end{tikzpicture} 
\caption{ $[2^{12}][2^{12}][1,1,2^{11}][4,2^{10}]$}
\label{[2^*][2^*][1,1,2^*][4,2^*]}
\end{figure}
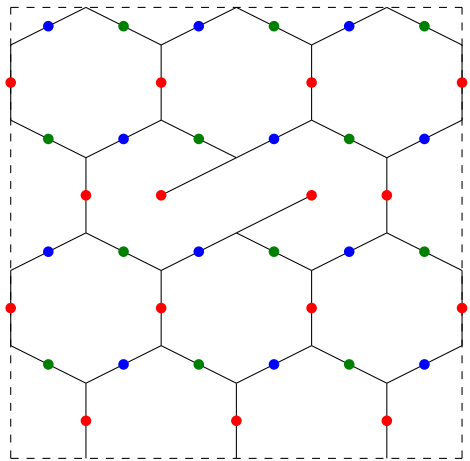
The following lemma shows that Definition \ref{def:close} is independent of the choice of base graph. 
\begin{lemma}
\label{lem:base_curve}
Let $(f_i)_{i\in \NN}$ and $(g_i)_{i\in \NN}$ be two families of coverings from a surface $X$ to $\mathbb{P}^1$, 
with the same branch point set $\{p_1,...,p_r\}\subset\mathbb{P}^1$.
Let $S \subset\mathbb{P}^1$ be a star tree passing through $p_1,...,p_{r-1}$, and let $\tilde{S}$ be a graph on $\mathbb{P}^1$ with vertices $p_1,...,p_{r-1},p_r$.
Denote the dessins $C_i = f_i ^{-1} (S)$ and $\tilde C_i = f_i ^{-1} (\tilde S)$, respectively $D_i = g_i ^{-1} (S)$, and $\tilde D_i = g_i ^{-1} (\tilde S)$ for $i \in \NN$. 
If $(D_i)_i$ is obtained via local changes from $(C_i)_i$, then also $(\tilde{D}_i)_i$ is obtained via local changes from $(\tilde{C}_i)_i$.\footnote{Of course, local changes between arbitrary families of colored graphs are to be defined exactly as for families of dessins in Definition \ref{def:close}.}
\end{lemma}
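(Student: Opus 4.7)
The plan is a reduction-and-localization argument. I would first reduce to the case where $\tilde S$ differs from $S$ by a single elementary move---adding or removing one edge, or adding or removing one vertex from the finite set $\{p_0, p_1, \ldots, p_r\}$---using the fact that any two finite graphs on $\mathbb{P}^1$ with vertex sets in this finite set are connected by a finite sequence of such moves. By induction on the length of the sequence (which depends only on $S$ and $\tilde S$, not on $i$), it suffices to handle one elementary move.

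In the base case, suppose $\tilde S = S \cup \{e\}$ for a single arc $e$ in $\mathbb{P}^1$; the cases of edge removal and vertex operations are analogous or trivial. Then $\tilde C_i \setminus C_i = f_i^{-1}(e)$ and $\tilde D_i \setminus D_i = g_i^{-1}(e)$, each being a disjoint union of $\deg f_i = \deg g_i$ arcs on the underlying surface, one per sheet. Comparing $\tilde C_i$ with $\tilde D_i$ thus reduces to comparing $f_i^{-1}(e)$ with $g_i^{-1}(e)$ under a natural sheet identification.

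The key input is a localization statement: the at most $c$ graph-theoretic changes transforming $C_i$ into $D_i$ can be realized by a surgery of the covering supported in an open set $V \subset \mathbb{P}^1$ (a bounded union of tubular neighborhoods of those edges of $S \cup \{p_0\}$ adjacent to the affected vertices and edges), whose size depends only on $c$, not on $i$. Outside $f_i^{-1}(V) = g_i^{-1}(V)$ the coverings $f_i$ and $g_i$ are equivalent via a sheet identification $\varphi_i$. After isotoping $e$ rel endpoints to an arc avoiding $V$ (possible since $V$ is bounded and $\mathbb{P}^1 \setminus \{p_1, \ldots, p_r\}$ is connected), the preimages of $e$ under $f_i$ and $g_i$ are identified by $\varphi_i$ up to a surgery within $f_i^{-1}(V)$ that contributes only a bounded number of changes. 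Combining with the original $c$ changes yields a uniform bound on the number of changes from $\tilde C_i$ to $\tilde D_i$, independent of $i$. The main obstacle is justifying this localization step---that $c$ dessin changes are realizable by a covering surgery supported in a region of $\mathbb{P}^1$ of bounded size---which I would handle by treating each elementary dessin modification (insertion or deletion of an edge or an isolated vertex) as a local surgery near the corresponding edge or vertex of $S \cup \{p_0\}$ and then taking the bounded union of these local regions over the $c$ changes.
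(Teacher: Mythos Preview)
Your reduction to a single elementary move is fine, but the localization step has a genuine gap, and the confusion shows up in the sentence ``Outside $f_i^{-1}(V) = g_i^{-1}(V)$ the coverings $f_i$ and $g_i$ are equivalent.'' Here $V$ lives in $\mathbb{P}^1$, so $f_i^{-1}(V)$ consists of roughly $\deg f_i$ sheets; the dessin changes, however, affect only a bounded number of those sheets. Writing $f_i^{-1}(V) = g_i^{-1}(V)$ presupposes that $f_i$ and $g_i$ agree as maps outside this set, which is exactly what you are trying to establish---the statement is circular. More fundamentally, the hypothesis only gives you a graph isomorphism between large pieces of $C_i$ and $D_i$; it does not give you a region of $X$ on which $f_i$ and $g_i$ literally coincide as maps, nor any identification of $f_i^{-1}(V)$ with $g_i^{-1}(V)$. (The side claim $\deg f_i = \deg g_i$ is also not in the hypotheses, though this is minor: the degrees can differ by at most $c$.) The subsequent step of isotoping $e$ off $V$ does nothing useful, since the problem was never that $e$ meets $V$ but that you have no control over the \emph{sheets} above $V$.

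The paper's argument avoids all of this by staying purely combinatorial. The point is that $C_i$ is a union of $\deg f_i$ copies of $S$, glued at ramification points, and (by homotopy lifting) $\tilde C_i$ is obtained by replacing each copy of $S$ by a copy of $\tilde S$ with the \emph{same} gluing pattern. A graph isomorphism between all-but-boundedly-many copies of $S$ in $C_i$ and in $D_i$ therefore induces, tautologically, a graph isomorphism between the corresponding copies of $\tilde S$ in $\tilde C_i$ and $\tilde D_i$. No covering surgery or localization on $\mathbb{P}^1$ is needed; you only need that the colored-graph structure of a dessin records which edges belong to which copy of the star.
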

\begin{proof}
At first fix $i\in\NN$ and let $n_i$ be the degree of $f_i$. 
Then $C_i$ consists of copies of $S$ glued together at some of the ramification points of $f_i$. 
Label the copies of $S$ by $S_j$ for $j\in I_i$. 
If $q_{j,k}\in S_j$ denotes the preimage of $p_k$ under $f_i$ for $k=1,\ldots,r-1$, then the homotopy lifting property implies that in the preimage of $\tilde S$ under $f_i$, the vertices $q_{j,1},\ldots, q_{j,r-1}$ are connected by a graph isomorphic (via $f_i$) to $\tilde S$, for $j\in I_i$. 
Thus the graph  $\tilde{C_i}$ is obtained by replacing each $S_j$ with a copy $\tilde S_j$ of $\tilde S$ for $j\in I_i$.  Note that the gluing of the copies $\tilde S_j$, $j\in J_i$ in $\tilde C_i$ is done at the same vertices as the gluing of the copies $S_j$, $j\in I_i$ in $C_i$. 
Analogously, the same holds for $D_i$ and $\tilde{D}_i$. Denote the copies of $S$ and $\tilde S$ in $D_i$ and $\tilde D_i$ by $T_j$ and $\tilde T_j$ for $j\in J_i$, respectively. 


Since by assumption $(D_i)_i$ is obtained via local changes from $(C_i)_i$, there exist subsets $I'_i\subseteq I_i$ and $J'_i\subseteq J_i$ such that the cardinalities of $I_i\setminus I'_i$ and $J_i\setminus J'_i$ are bounded by a constant independent of $i$, and  
an isomorphism $\phi_i$ between the subgraph $\cup_{j\in I_i}S_j$ and the subgraph  $\cup_{j\in J'_i}T_j$, for every $i\in\NN$. Since $\tilde C_i$ and $\tilde D_i$ are obtained from $C_i$ and $D_i$, respectively, by replacing copies of $S$ with copies of $\tilde S$, the isomorphism $\phi_i$ induces an isomorphism $\psi_i$ between the subgraph $\cup_{j\in I_i'}\tilde S_j$ and the subgraph  $\cup_{j\in J'_i}\tilde T_j$, for every $i\in\NN$. As the number of vertices and edges in the complement of these subgraphs is bounded by a constant independent of $i$,  $(\tilde C_i)_i$ and $(\tilde D_i)_i$ differ by local changes. 
\end{proof}


Note that if the family $(C_i)_i$ of dessins is realizable as local changes to a family $(D_i)_i$, then the corresponding ramification types differ by an absolutely bounded amount of entries, as these correspond to the degrees of vertices and faces.  
In particular, if $(C_i)_i$ is a family of almost-regular dessins,
then also the family of ramification types of $(C_i)_i$ is almost-regular.
The converse of this simple observation can be seen as the second part of Question \ref{conj:existence}. 

\section{Proofs of the main results for $g=1$}
\subsection{Reduction arguments}\label{red}
Here we review some elementary techniques to reduce existence of certain ramification types to other types. 
\subsubsection{Composition}
Composition of a degree-$n$ covering $f:R\to \mathbb{P}^1$ with a non-constant degree-$d$ rational function $g:\mathbb{P}^1\to \mathbb{P}^1$ yields a degree-$nd$ covering $g\circ f:R\to \mathbb{P}^1$. 
For example, the regular ramification type $[2^k][3^{2k/3}][6^{k/3}]$ may be realized by composing a covering of ramification type $[3^{k/3}]^3$ and suitable choice of branch points with a covering of ramification type $[2][2]$ as follows. Let $f: \mathbb{T} \to \mathbb{P}^1$ be a covering from the torus to the sphere, of ramification type $[3^{k/3}]^3$ and branch point set $\{\infty, 1,-1\}$. Define $g:\mathbb{P}^1\to \mathbb{P}^1$ by $x\mapsto x^2$. The branch points of $g$ are $0$ and $\infty$, and the only preimages of these are also $0$ and $\infty$, respectively. 
Multiplicativity of ramification indices then shows that the branch points of $g\circ f$ are $0,1$ and $\infty$, and its  ramification type is $[2^k][3^{2k/3}][6^{k/3}]$. 
%

Composition with rational function preserves ``local changes" in the sense of Definition\ \ref{def:close}.
\begin{lemma}
\label{lemma:rattranslates}
Let $(f_i)_{i\in\NN}$ and $(g_i)_{i\in\NN}$ be two families of coverings $R\to\mathbb{P}^1$, with the same branch point set. Let 
$h:\mathbb{P}^1\to \mathbb{P}^1$ a covering. 
Let $C_i, D_i, \tilde C_i, \tilde D_i$ be the dessins corresponding to $f_i, g_i, h\circ f_i,h\circ g_i$, respectively, for $i\in \NN$. 
If $(C_i)_i$ is obtained by local changes from $(D_i)_i$, then $(\tilde C_i)_i$ is obtained by local changes from $(\tilde D_i)$,  $i\in \NN$.
\end{lemma}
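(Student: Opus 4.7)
The plan is to reduce the claim directly to Lemma \ref{lem:base_curve} by exploiting the tautological identity
\[
(h\circ f_i)^{-1}(\tilde S) \;=\; f_i^{-1}\bigl(h^{-1}(\tilde S)\bigr),
\]
together with the analogue for $g_i$, where $\tilde S$ denotes the star tree used to build $\tilde C_i$ and $\tilde D_i$. The point is that composing with $h$ on the target side can be absorbed into a change of \emph{base graph} on the target $\mathbb{P}^1$ of $f_i$ and $g_i$, a change which Lemma \ref{lem:base_curve} is designed to handle.

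First I would fix notation. Let $\{p_1,\ldots,p_r\}$ be the common branch point set of $f_i$ and $g_i$, and let $S$ be the star tree through $p_1,\ldots,p_{r-1}$ that defines $C_i$ and $D_i$. By multiplicativity of ramification indices, $h\circ f_i$ and $h\circ g_i$ share the branch point set
\[
B \;=\; h(\{p_1,\ldots,p_r\})\,\cup\,\{\text{branch points of }h\},
\]
so a single star tree $\tilde S$ through (all but one point of) $B$ works simultaneously for $\tilde C_i$ and $\tilde D_i$.

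Next I would introduce the graph $G:=h^{-1}(\tilde S)$ on $\mathbb{P}^1$ and observe that, since $h(p_k)\in B$ for every $k$, the point $p_k$ lies in the preimage of a vertex of $\tilde S$, so $G$ is a graph on $\mathbb{P}^1$ whose vertex set contains $\{p_1,\ldots,p_r\}$. The key identity above then rewrites as $\tilde C_i = f_i^{-1}(G)$ and $\tilde D_i = g_i^{-1}(G)$.

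Finally, I would invoke Lemma \ref{lem:base_curve} with the star tree $S$ and with $G$ in the role of the second graph $\tilde S$ of that lemma: the hypothesis that $(D_i)_i$ is obtained from $(C_i)_i$ by local changes with respect to the base graph $S$ then yields that $(g_i^{-1}(G))_i=(\tilde D_i)_i$ is obtained from $(f_i^{-1}(G))_i=(\tilde C_i)_i$ by local changes, as required. The only step needing any care is the verification that $G$ is an admissible base graph in the sense of Lemma \ref{lem:base_curve}, i.e.\ that its vertex set contains all branch points of $f_i$ and $g_i$; this is handled by the previous paragraph. I do not anticipate a serious obstacle: once the identity $(h\circ f_i)^{-1}(\tilde S) = f_i^{-1}(h^{-1}(\tilde S))$ is in place, all the substantive combinatorial content has already been absorbed into Lemma \ref{lem:base_curve}.
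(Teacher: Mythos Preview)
Your proposal is correct and follows essentially the same approach as the paper: both reduce to Lemma~\ref{lem:base_curve} via the identity $(h\circ f_i)^{-1}(\tilde S)=f_i^{-1}(h^{-1}(\tilde S))$, setting $G:=h^{-1}(\tilde S)$ and recognizing $\tilde C_i$, $\tilde D_i$ as the $f_i$- and $g_i$-preimages of $G$. The only small care point is that your claim ``every $p_k$ is a vertex of $G$'' tacitly requires the branch point omitted from $\tilde S$ not to lie in $h(\{p_1,\ldots,p_r\})$; this is easily arranged by an appropriate choice, and the paper's proof is at the same level of informality here.
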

\begin{proof}
Let $S:=\{s_1,...,s_k\}$ be the set of finite branch points of $h\circ f_i$ for $i\in \NN$
and $T:=h^{-1}(S)$. Let $\mathcal{S}\subset \mathbb{P}^1$ be a star through $S$. Then $\tilde C_i$ is equivalent and hence can be replaced by the preimage under $f_i$ of the graph $G:=h^{-1}(\mathcal{S})$ with vertex set $T$. 
Similarly, $\tilde D_i$ can be replaced by the preimage under $g_i$ of $G$. Since $(C_i)_i$ is obtained by local changes from $(D_i)_i$, Lemma \ref{lem:base_curve} implies that the family of colored graphs $f_{i}^{-1}(G)$, $i\in \NN$, are obtained by local changes from $g_{i}^{-1}(G)$. 
\end{proof}

\subsubsection{A simple example of a local change}
In addition to the the technique of composing maps, existence of certain ramification data 
can be obtained by preforming certain moves on realizable ramification data. An example is the following lemma which changes only certain entries of a ramification data. We denote by $A_j$ a multiset of entries in the $j$-th partition. 
\begin{lemma}
\label{lemma:addlines}
Let $r\ge 3$. Let $T$ be the ramification data  $[a, A_1][b, A_2][A_3]...[A_4]$. Assume  $D$ is a dessin with ramification type $T$ in which  
there exists a vertex of degree $a$ and label $1$ adjacent to a vertex of degree $b$ and label $2$.
Then the following ramification data is realizable
$${T}^{(k)}:=([a+k , A_1][b+k ,  A_2 ][1^{k}, A_3]...[1^{k}, A_r])\text{ for every }k\in \NN.$$ 
Furthermore, if an infinite family of ramification data $(T_i)_{i\in\NN}$ is realizable by a family of almost-regular dessins, then the same holds for the corresponding family of ramification data $(T_i^{(k)})_{i\in\NN}$.
\end{lemma}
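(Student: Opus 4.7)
The plan is to work in the monodromy picture: a realizable ramification data of degree $n$ corresponds to a constellation $(\sigma_1,\ldots,\sigma_r)\in S_n^r$ with $\sigma_1\cdots\sigma_r=1$, where $\sigma_i$ has cycle type $E_i$, the vertices of label $i<r$ correspond to cycles of $\sigma_i$, and the faces correspond to cycles of $\sigma_r$. The hypothesis that $v_1$ is adjacent to $v_2$ in $D$ translates into the existence of a sheet $i_0\in\{1,\ldots,n\}$ lying both in the length-$a$ cycle of $\sigma_1$, say $(i_0,i_1,\ldots,i_{a-1})$, and in the length-$b$ cycle of $\sigma_2$, say $(i_0,j_1,\ldots,j_{b-1})$.

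To realize $T^{(k)}$, I would introduce new sheets $n+1,\ldots,n+k$ and define permutations $\sigma_i'\in S_{n+k}$ as follows: $\sigma_1'$ inserts the new sheets into the $v_1$-cycle just after $i_0$, turning it into $(i_0,n+1,\ldots,n+k,i_1,\ldots,i_{a-1})$; $\sigma_2'$ inserts them into the $v_2$-cycle just before $i_0$ in reversed order, turning it into $(i_0,j_1,\ldots,j_{b-1},n+k,\ldots,n+1)$; and for $3\le i\le r-1$, $\sigma_i'$ fixes every new sheet and agrees with $\sigma_i$ elsewhere. A short computation then shows that $\sigma_1'\sigma_2'\cdots\sigma_{r-1}'$ fixes each new sheet (using $\sigma_2'\colon n+j\mapsto n+j-1$ and $\sigma_1'\colon n+j-1\mapsto n+j$, with $\sigma_2'(n+1)=i_0$ and $\sigma_1'(i_0)=n+1$ closing the loop) and agrees with $\sigma_1\cdots\sigma_{r-1}$ on the old sheets. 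Consequently $\sigma_r':=(\sigma_1'\cdots\sigma_{r-1}')^{-1}$ has cycle type $[1^k,A_r]$, so $(\sigma_1',\ldots,\sigma_r')$ is a constellation realizing $T^{(k)}$.

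Translated back to dessins, this monodromy modification inserts $k$ new copies of the lift of the star tree $\Gamma$ into a small neighborhood of the common lift of $\Gamma$ containing $v_1$ and $v_2$ in $D$; each new copy adds one edge at $v_1$, one edge at $v_2$, a dangling leaf to a new degree-$1$ vertex of each label $j\in\{3,\ldots,r-1\}$, and creates one new face of degree~$1$. Thus $D^{(k)}$ is obtained from $D$ by a bounded (in $k$ and $r$) number of local changes in the sense of Definition~\ref{def:close}. For the final assertion I would apply the construction at the prescribed adjacency in each member of the given family: if $(D_i)_{i\in\NN}$ is a family of almost-regular dessins realizing $(T_i)_{i\in\NN}$ and hence differs from a fixed regular family by an absolutely bounded number of changes, then so does $(D_i^{(k)})_{i\in\NN}$, which is therefore almost-regular and realizes $(T_i^{(k)})_{i\in\NN}$. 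The main technical obstacle I expect is the verification that $\sigma_r'$ has the claimed cycle type on old sheets; this reduces to observing that the only points where $\sigma_1',\sigma_2'$ disagree with $\sigma_1,\sigma_2$ are $i_0$ and $j_{b-1}=\sigma_2^{-1}(i_0)$ respectively, and checking by a short case analysis that these disagreements cancel in the composition $\sigma_1'\sigma_2'$.
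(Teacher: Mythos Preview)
Your argument is correct and is exactly the monodromy translation of the paper's proof, which simply replaces the edge between the two distinguished vertices by the same edge together with $k$ additional parallel arcs through new preimages of $p_0$, each carrying dangling degree-$1$ vertices of labels $3,\ldots,r-1$ (Figure~\ref{add_edges}); your insertion of the sheets $n+1,\ldots,n+k$ into the cycles of $\sigma_1$ and $\sigma_2$ is precisely this picture read off as a constellation. The only point you leave implicit is transitivity of $\langle\sigma_1',\ldots,\sigma_r'\rangle$, but this is immediate since the new sheets lie in the $\sigma_1'$-cycle through $i_0$.
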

\begin{proof}
Replace a line connecting the two vertices of degree $a$ and $b$ by the the construction in Figure \ref{add_edges}. 
This increases the degree of those two vertices by $k$ each; furthermore, $k$ vertices of degree $1$ are added for each other vertex color, and similarly, $k$ faces of degree $1$ are added.

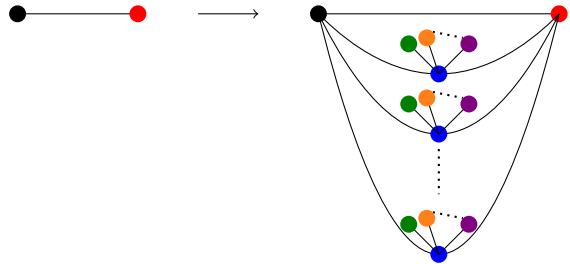
\begin{figure}[h!] 
\centering 
\begin{tikzpicture}[scale=.8] 
\fill[black] (-6,0) circle (4pt);
\draw (-6, 0)-- (-4,0) ; 
\fill[red] (-4,0) circle (4pt);
\draw[->] (-3,0) -- ( -2,0); 
\fill[black] (-1,0) circle (4pt);
\draw (-1, 0)-- (3,0) ; 
\fill[red] (3,0) circle (4pt);
\draw (-1,0) parabola bend (1, -1) (3,0); 
\draw (-1,0) parabola bend (1, -2) (3,0); 
\draw (-1,0) parabola bend (1, -4) (3,0); 
\draw[dotted, thick] (1, -2.25) -- ( 1, -3); 
\fill[blue] (1,-1) circle (4pt);
\fill[blue] (1,-2) circle (4pt);
\fill[blue] (1,-4) circle (4pt);
\draw ( 1, -1) -- (.5, -.5); 
\draw ( 1, -1) -- (1.5, -.5); 
\draw (1, -1) -- (.8 , -.4) ; 
\draw [dotted, thick] (. 9, -.3) -- ( 1.4, -.4); 
\fill[green] (.5,-.5) circle (4pt);
\fill[orange] (.8,-.4) circle (4pt);
\fill[purple] (1.5,-.5) circle (4pt);
\draw ( 1, -2) -- (.5, -1.5); 
\draw ( 1, -2) -- (1.5, -1.5); 
\draw (1, -2) -- (.8 , -1.4) ; 
\draw [dotted, thick] (. 9, -1.3) -- ( 1.4, -1.4); 
\fill[green] (.5,-1.5) circle (4pt);
\fill[orange] (.8,-1.4) circle (4pt);
\fill[purple] (1.5,-1.5) circle (4pt);
\draw ( 1, -4) -- (.5, -3.5); 
\draw ( 1, -4) -- (1.5, -3.5); 
\draw (1, -4) -- (.8 , -3.4) ; 
\draw [dotted, thick] (. 9, -3.3) -- ( 1.4, -3.4); 
\fill[green] (.5,-3.5) circle (4pt);
\fill[orange] (.8,-3.4) circle (4pt);
\fill[purple] (1.5,-3.5) circle (4pt);
\end{tikzpicture} 
\caption{Adding $k$ faces of degree 1, and $k$ degree 1 vertices}
\label{add_edges}
\end{figure} 

The second claim follows, since the transition from $T_i$ to $T_i^{(k)}$ with the above construction adds only a bounded number of edges and vertices.
\end{proof}

\subsection{A method for local changes to dessins on the torus}
The following proposition is our main tool to form dessins on a torus from a single dessin with suitable properties.

We first introduce terminology related to tilings of the plane  with regular polygons. 
Let $\mathbb{A}$ be the $2$-dimensional plane tiled with polygons of diameter bounded from below. A  subset $D$ of polygons is called a {\emph{disk of radius $r$}} if it consists of all polygons of (graph) distance at most $r$ from a single polygon, called the center of the disk. Here, the distance of two polygons is the minimal number of polygons on a path connecting the centers of the polygons.
Consider the induced tiling on the torus obtained from quotienting $\mathbb{A}$ by a lattice with generators the \emph{longitude} and \emph{meridian} of the torus. The parallelogram $P$ formed by longitude and meridian is the \emph{fundamental domain} for the torus.
If a disk of polygons of radius $r$ is completely contained in the inside of the fundamental domain, then the induced graph on the torus {\it contains a disk of radius $r$}. 
A family of graphs  $(D_i)_{i\in\NN}$ on a torus {\it contains disks of arbitrary radius} if for every $r\in \NN$, there exists $i_0\in \NN$ such that $D_i$ contains a disk of radius $r$ for all $i\ge i_0$.

\begin{proposition}
\label{prop:localch1}
Let $c>0$ be a constant. There exists a constant $r=r(c)$, depending only on $c$, with the following property. Let $(T_i)_{i\in\NN}$ be a family of almost-regular genus $1$ ramification data. 
If there exist $k\in \NN$ and a dessin  $C_k$ for $T_k$ which is obtained by at most $c$ changes to a regular dessin containing a disk of radius $r$,  then with the exception of finitely many terms, the family $(T_i)_i$ is realizable by local changes to a family of regular dessins containing disks of arbitrary large radius. 
\end{proposition}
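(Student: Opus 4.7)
The plan is to observe that the $c$ changes producing $C_k$ from a regular dessin $R_k$ modify only a bounded-size patch of $R_k$; taking $r(c)$ large enough that any configuration arising from $c$ changes fits into a disk of radius $r(c)$ in the universal cover of $R_k$, the hypothesis that $R_k$ contains such a disk guarantees that the modification can be described purely locally, via a fixed diagram in the planar tiling $\mathbb{A}$ that lifts $R_k$. Let $[k_1,\ldots,k_r]$ be the regular type of $R_k$ (one of the four genus~$1$ regular types) and let $\mathbb{A}$ be the corresponding planar tiling. After passing to a subfamily of $(T_i)$ whose irregular entries all coincide with those of $T_k$, as permitted by Remark \ref{rem:general-almost}, the regular part of each $T_i$ prescribes an index $N_i\to\infty$ for a regular dessin to be constructed.

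For each sufficiently large $i$, I would build a regular dessin $R_i$ by quotienting $\mathbb{A}$ by a rank-$2$ sublattice $\Lambda_i$ of its translational symmetry group, of index equal to $N_i$. The crucial point is to choose $\Lambda_i$ so that its shortest nonzero vector grows without bound with $i$, thereby ensuring that $R_i$ contains a Euclidean disk of radius $r(c)$, and indeed of arbitrarily large radius as $i\to\infty$. Such lattices exist for every sufficiently large index by an elementary geometry-of-numbers argument: among the $\sigma_1(N)$ index-$N$ sublattices of $\mathbb{Z}^2$, a generic one has systole of order $\sqrt{N}$, and explicit balanced lattices can be written down via best Diophantine approximations.

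Having constructed $R_i$ with a disk $D_i$ of radius $r(c)$ in its interior, identify $D_i$ via the common planar tiling $\mathbb{A}$ with the analogous disk in $R_k$, and perform the same $c$ changes inside $D_i$ to produce a dessin $C_i$. Outside $D_i$, the graph $R_i$ is regular and contributes precisely the regular entries of $T_i$; inside $D_i$, the modification contributes the irregular entries of $T_k$, which by construction are also the irregular entries of $T_i$. Hence $C_i$ realizes $T_i$, and $(C_i)_i$ is obtained by at most $c$ local changes from the regular family $(R_i)_i$. Since the systoles of the $\Lambda_i$ grow without bound, the $R_i$ contain disks of arbitrarily large radius, as required.

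The main obstacle is the lattice-geometric step: for the prescribed indices $N_i$, constrained by the congruence conditions imposed by the almost-regular type of $T_i$, one must exhibit sublattices of $\mathbb{A}$'s translation group with systoles tending to infinity. Although the underlying statement about sublattices of $\mathbb{Z}^2$ is elementary, matching the permissible indices to each of the four regular types and verifying growth of the systole uniformly across all sufficiently large $N_i$ is the concrete technical content of the argument.
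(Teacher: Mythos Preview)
Your proposal is correct and follows essentially the same approach as the paper. The lattice-geometric step you flag as the main obstacle is precisely the paper's Lemma~\ref{notstrip}, proved by exhibiting explicit $2\times 2$ integer matrices of determinant $n$ whose columns have length of order $\sqrt{n}$ and angle bounded away from zero.
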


Our strategy is to remove part of the regular tiling and replace it with an almost-regular tiling without changing the rest of the tiles.  
To do so we show there exists a tiling of the torus consisting of any sufficiently large number of hexagons and containing disks of arbitrary large radius. The proof is carried out with hexagons but also applies to squares.   





\begin{lemma}\label{notstrip}  Given $n \in \NN$, the torus can be tiled by a regular dessin consisting of $n$ hexagons.  Moreover, for all $r\in \NN$ and  sufficiently large $n$ compared to $r$, there is such tiling containing  a disk of radius $r$.
\end{lemma}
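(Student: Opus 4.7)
Every regular hexagonal tiling of a torus lifts to the standard hexagonal tiling of the plane, so arises as a quotient $\RR^2/\Lambda$ for some sublattice $\Lambda$ of the translation symmetry group $\Lambda_0 \cong \ZZ^2$ of the tiling; the quotient has $[\Lambda_0 : \Lambda]$ hexagons. For the first assertion I would simply take $\Lambda := \ZZ(n v_1) \oplus \ZZ v_2$ for a fixed $\ZZ$-basis $(v_1,v_2)$ of $\Lambda_0$, which has index $n$.

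The second assertion reduces to finding, for every sufficiently large $n$, a sublattice of $\Lambda_0$ of index $n$ whose Voronoi cell contains a Euclidean disk of radius $\gtrsim r$ --- equivalently, whose shortest nonzero vector has length $\gtrsim r$, since the Voronoi cell always contains a disk of radius equal to half the length of the shortest nonzero lattice vector. I would produce such a sublattice uniformly: set $m := \lceil \sqrt n\,\rceil$ and $c := m^2 - n \in [0, 2\sqrt n\,]$, and take $\Lambda_n$ to be generated by $v := m v_1 + v_2$ and $w := c v_1 + m v_2$. The determinant gives $[\Lambda_0 : \Lambda_n] = m^2 - c = n$.

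The main technical step is a lower bound showing that every nonzero vector of $\Lambda_n$ has Euclidean length $\Omega(\sqrt n)$. I would establish this by parametrizing the lattice elements $P v_1 + Q v_2$ (those with $P \equiv mQ \pmod n$) via $P = mQ - jn$ and rewriting $|P v_1 + Q v_2|^2 = P^2 + PQ + Q^2$, by completing the square, as
\[
n^2\Bigl(j - \tfrac{Q(2m+1)}{2n}\Bigr)^{\!2} + \tfrac{3}{4} Q^2.
\]
For $|Q| \geq \sqrt n /2$ the second term gives the bound immediately; for smaller $|Q|$, the first term dominates once one verifies the elementary Diophantine estimate that $q(2m+1)/(2n)$ lies at distance $\gtrsim 1/\sqrt n$ from $\ZZ$ for every $1 \leq q \leq \sqrt n /2$, which follows from $(2m+1)/(2n) = \Theta(1/\sqrt n)$ together with a direct check of the few values of $q$ at which $q(2m+1)/(2n)$ could approach an integer (essentially $q=1$ and $q \approx m$).

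The key obstacle is handling all $n$ with a single construction: for $n$ prime (or lacking a balanced factorization $n = ab$ with $a,b$ both comparable to $\sqrt n$), the naive rectangular sublattices $\ZZ(a v_1) \oplus \ZZ(b v_2)$ necessarily have a short side, so their quotient is a long thin strip rather than a fat torus. The slanted basis $(v,w)$ above has both basis vectors of length $\Theta(\sqrt n)$ irrespective of the arithmetic of $n$, and the completing-the-square identity converts this balance into the required uniform lower bound on the shortest vector, whence $\Lambda_n$ yields a torus containing a disk of $r$ hexagons as soon as $n \gtrsim r^2$.
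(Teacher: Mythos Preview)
Your proposal is correct and follows essentially the same construction as the paper: both produce an index-$n$ sublattice of $\ZZ^2$ via a matrix of the shape $\left(\begin{smallmatrix} m & c \\ 1 & m \end{smallmatrix}\right)$ with $m\approx\sqrt{n}$ and $c=m^2-n$, and then argue that the resulting torus is ``fat''. The paper does this by bounding the cosine of the angle between the two column vectors away from $\pm 1$ (together with a small case split according to whether $n$ is closer to $\lfloor\sqrt{n}\rfloor^2$ or to $\lceil\sqrt{n}\rceil^2$); you instead bound the shortest nonzero lattice vector directly by completing the square in the hexagonal quadratic form $P^2+PQ+Q^2$. Your route avoids the case distinction and goes straight to the quantity that matters (the inradius of the Voronoi cell), so it is a shade cleaner, but the two arguments are really the same idea verified in two equivalent ways.

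One small correction: in your Diophantine step the parenthetical ``(essentially $q=1$ and $q\approx m$)'' is off, since the range $1\le q\le \sqrt{n}/2$ already excludes $q\approx m$. The estimate you need holds for a simpler reason: for such $q$ one has
\[
\frac{q(2m+1)}{2n}\;\in\;\Bigl[\tfrac{2m+1}{2n},\ \tfrac{1}{2}+O\bigl(\tfrac{1}{\sqrt{n}}\bigr)\Bigr]\subset(0,1),
\]
so its distance to the nearest integer is at least $\min\bigl(\tfrac{2m+1}{2n},\,\tfrac{1}{2}-O(\tfrac{1}{\sqrt{n}})\bigr)\gtrsim 1/\sqrt{n}$, with no further case analysis needed.
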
 

\begin{proof} We want to show that we can quotient a hexagon tiling of the plane by a parallelogram lattice containing the centers of $n$ hexagons  
to obtain a tiling of the torus with $n$ hexagons, which  contains a disk of radius $r$ if $n$ is sufficiently large. 

Begin with a tiling of the plane by hexagons. Consider the short exact sequence 
$$1 \to \ZZ^2 \to \ZZ^2 \to \ZZ /n \to 1 .$$ 
Finding a parallelogram containing the centers of $n$ hexagons  is equivalent to finding a map $\ZZ^2 \to \ZZ^2$ of determinant $n$.  
To get a disk of radius $r$ inside the parallelogram for all sufficiently large $n$, 
it suffices to have side lengths tending to infinity and angle bounded away from $0$. 

There are three cases.  If $n = a^2$ is a square,  we use the parallelogram spanned by the columns of the matrix $\left(\begin{smallmatrix} a& 0\\
0 & a \end{smallmatrix} \right) $ and the angle between the column vectors is a right angle. 
If $n$ is not a square,  choose $a$ such that $a^2 < n <(a+1)^2$.  
Then either $a^2$ or $(a+1)^2$ is nearer to $n$. Choose $k$ such that either 
$ k = n-a ^2 \leq a$ or $k = (a+1)^2  -n \leq a.$
In the latter case,  
use the matrix $\left(\begin{smallmatrix} a+1 & k\\
1 & a+1 \end{smallmatrix} \right) $. 
This matrix  has determinant $n$, the length of the column vectors is at least $a\ge \sqrt{n}-1$. 
The angle between the column vectors has cosine  
$$\frac{(a+1)k+a+1}{\sqrt{(a+1)^2+1}\sqrt{(a+1)^2 + k^2}}\leq \frac{k+1}{\sqrt{2(a+1)k}}\leq \frac{1}{\sqrt{2}} +0.1
$$
where the last inequality holds for $a>10$. 
Since the result is clearly bounded away from $1$, the angle is bounded away from $0$. 

In the case where $k = n-a^2$, use the matrix $\left ( \begin{smallmatrix} a +1 & k \\ 1 & a+1 \end{smallmatrix} \right ) $ and a similar but simpler computation yields that cosine of the angle is bounded by $1/\sqrt{2}$ and hence the angle is bounded away from $0$. Picking the parallelogram this way gives a tiling of the torus by $n$ hexagons, and if $n$ is sufficiently large a parallelogram which contains a disk of radius $r$. 
\end{proof} 

\begin{proof}[Proof of Proposition  \ref{prop:localch1}] 
Let $D_k$ be a regular dessin on the torus which without loss of generality contains a disk of radius $r$, consisting of regular polygons, and assume that $C_k$ is a dessin of ramification type $T_k$ which is obtained from $D_k$ by at most $c$ changes. Note that these changes can only affect a bounded number of polygons in $D_k$, depending only on $c$. Consider the fundamental domain $P$ for $D_k$ in the plane. If $r$ is sufficiently large in comparison to $c$, then there exists some parallel to the longitude which intersects none of the polygons affected by the above changes; analogously for the meridian. That is, we may assume without loss, by translating $P$, that none of the changes affect polygons which intersect the meridian or longitude of $P$. 

We can thus extend the tiling of our fundamental domain to a tiling of the plane which matches a tiling by regular polygons outside of this fundamental domain.

Now choose $R\in \NN$ sufficiently large, e.g.\ such that, in the extended tiling above, there is a disk of polygons of radius $R$ which contains the complete fundamental domain $P$ in its interior.
By Lemma \ref{notstrip}, for any sufficiently large $i\in \NN$, we can tile the torus by a regular dessin consisting of exactly $i$ hexagons (resp.~squares), containing a disk of polygons of radius at least $R$. We remove hexagons (resp.~squares) within the radius $r$ (sub-)disk and replace them by the same changes as for the dessin $C_k$.  Since this always requires only the fixed number $c$ of changes, we obtain a dessin $C_i$ of ramification type $T_i$, for all sufficiently large $i$, and such that the family $(C_i)_i$ arises as $\le c$ changes to a family of regular dessins containing disks of arbitrary large radius, cf.~figure \ref{[2^*][2^*][1,1,2^*][4,2^*]}.  
\end{proof}



A key observation for the proof of our main results is that, under the assumptions on $(T_i)_{i\in \NN}$, the implication of Proposition \ref{prop:localch1} is also compatible with composition of maps, as made precise in Lemma \ref{lem:glue2} below. Note the iterative nature of this lemma, which will be made use of in the reduction arguments in the following section.

\begin{lemma}
\label{lem:glue2}
Let $(f_i)_{i\in \NN}$ and $(g_i)_{i\in\NN}$ be  families of coverings $\mathbb{T}\to \mathbb{P}^1$ with $g_i$, $i\in \NN$ Galois. 
Let $(D_i)_{i\in\NN}$ be a family of regular dessins for $(g_i)_{i\in \NN}$ which contains disks of arbitrary radius, and assume the dessins $(C_i)_{i\in\NN}$ for $(f_i)_{i\in\NN}$ are obtained by local changes to $(D_i)_{i\in\NN}$. 
%
Let $(T_j)_{j\in\NN}$ be a family of genus 1 almost-regular ramification data and $h:\mathbb{P}^1\to \mathbb{P}^1$ a fixed covering such that the ramification types of $(h\circ f_i)_{i\in\NN}$ are contained in $(T_j)_{j\in \NN}$. Then $(T_j)_{j\in \NN}$ are realizable in every sufficiently large degree by a family of almost-regular dessins $(\tilde C_j)_{j\in \NN}$.\footnote{We emphasize here that we obtain a statement for {\textit{all}} sufficiently large degrees of some family of ramification types, and not only for those degrees which correspond to a decomposable covering $h\circ f_i$!} 

Furthermore, the family $(\tilde C_j)_{j\in \NN}$ is obtained by local changes to a family of regular dessins containing disks of arbitrary radius. 
\end{lemma}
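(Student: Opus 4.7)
The plan is to reduce the claim to Proposition~\ref{prop:localch1} applied to the almost-regular family $(T_j)_{j\in\NN}$. Recall that this proposition only requires exhibiting a single $T_{j_0}$ in the family realized by a dessin obtained from a regular dessin containing a disk of radius $r(c)$ by at most $c$ changes, for a suitable constant $c$. The first step is to invoke Lemma~\ref{lemma:rattranslates} with the fixed covering $h$: since $(C_i)_{i\in\NN}$ differs by local changes from the regular family $(D_i)_{i\in\NN}$, the composed dessins $(\tilde C_i)_{i\in\NN}$ for $(h\circ f_i)_{i\in\NN}$, taken with respect to a suitable star tree $S$ through the combined branch point set, differ by a bounded number of changes from the dessins $(\tilde D_i)_{i\in\NN}$ for $(h\circ g_i)_{i\in\NN}$.

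The second step is to analyze the structure of $(\tilde D_i)_{i\in\NN}$. Since $g_i$ is Galois with regular dessin $D_i$, the latter is a tiling of the torus by regular polygons on which the deck transformation group acts transitively. The dessin $\tilde D_i$ is obtained by pulling back the graph $h^{-1}(S)$ under $g_i$, and in the interior of each tile of $D_i$ the map $g_i$ is unramified. The Galois symmetry then forces the refinement inside each tile to be a common fixed pattern determined only by $h$, so $\tilde D_i$ has a globally uniform tiling structure. I would argue that it differs from a genuine regular dessin $E_i$ on the torus by at most boundedly many local changes independently of $i$: any deviation from a regular ramification type arises only near the finitely many interactions between the branch locus of $h$ and the branch locus of $g_i$, involving boundedly many tiles. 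Moreover, since the refinement preserves the large regular regions of $D_i$, the family $(E_i)_{i\in\NN}$ inherits from $(D_i)_{i\in\NN}$ the property of containing disks of arbitrary large radius.

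Combining these observations, each $\tilde C_i$ is obtained by a bounded number $c$ of changes from the regular dessin $E_i$, and $E_i$ contains disks of radius tending to infinity with $i$. Choose $i_0$ large enough so that $E_{i_0}$ contains a disk of radius $r(c)$. The ramification type of $h\circ f_{i_0}$ is some $T_{j_0}$ in the family $(T_j)_j$, so Proposition~\ref{prop:localch1} applied with this witness yields realizability of all but finitely many $T_j$ by a family of almost-regular dessins $(\tilde C_j)_{j\in\NN}$ obtained by local changes to a family of regular dessins with disks of arbitrary radius --- exactly the conclusion sought.

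The main obstacle is the second step: showing that the refined dessins $\tilde D_i$ differ from a true regular dessin on the torus by only boundedly many changes. The Galois hypothesis on $g_i$ supplies the global uniformity of the refinement, but establishing regular ramification type up to a bounded number of local changes requires a careful case analysis of how the branch locus of $h$ interacts with that of $g_i$, which I expect to handle by localizing the deviations to bounded regions near the images of $h$'s branch points.
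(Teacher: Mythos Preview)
Your overall architecture is the same as the paper's: use the compatibility of local changes with composition (Lemma~\ref{lemma:rattranslates}/Lemma~\ref{lem:base_curve}) to see that the dessins $\tilde C_i$ for $h\circ f_i$ are local changes of the dessins $\tilde D_i$ for $h\circ g_i$, then feed a single sufficiently large witness into Proposition~\ref{prop:localch1}. The paper also carries out the argument that $(\tilde D_i)_i$ contains disks of arbitrary radius, essentially by your ``refinement by a fixed pattern'' reasoning (formulated there as: each polygon of $D_i$ meets only a bounded number of polygons of $\tilde D_i$, the bound depending only on $h$).

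Where you diverge is in your ``main obstacle''. You look for an auxiliary regular family $(E_i)_i$ close to $(\tilde D_i)_i$, worrying that $\tilde D_i$ itself might fail to have regular ramification type. In fact this obstacle is illusory: $\tilde D_i$ is already a regular dessin. The reason is the hypothesis that the ramification types of $h\circ f_i$ lie in an almost-regular genus~$1$ family, say of type $[k_1,\ldots,k_r]$. Over a branch point $p$ with $h$-preimages $q_1,\ldots,q_s$ of multiplicities $m_1,\ldots,m_s$, the entries of $h\circ g_i$ over $p$ coming from $q_t$ are all equal to $m_t e_t$, where $e_t$ is the common ramification index of the Galois covering $g_i$ over $q_t$. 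Since $f_i$ differs from $g_i$ only in boundedly many entries, the entries of $h\circ f_i$ over $p$ coming from $q_t$ are, for all but boundedly many of them, also equal to $m_t e_t$; but almost all entries of $h\circ f_i$ over $p$ equal $k_j$, and there are unboundedly many contributions from each $q_t$ as $\deg g_i\to\infty$. Hence $m_t e_t=k_j$ for every $t$, and $h\circ g_i$ has regular ramification type. This is exactly the paper's first sentence: ``by multiplicativity of ramification indices, since the ramification types of $h\circ f_i$ are almost-regular of genus~$1$, the ramification types of $h\circ g_i$ are regular of genus~$1$.'' With this observation your step two collapses to $E_i=\tilde D_i$ and no case analysis is needed.
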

\begin{proof}
At first fix $i\in\NN$. We first note that by multiplicativity of ramification indices, since the ramification types of the family $h\circ f_i$ are almost regular of genus 1, the ramification types of the family $h \circ g_i$ are regular of genus 1.  Assume $C_i$ and $D_i$ are drawn as preimages of some star tree $S\subset \mathbb{P}^1$. Once again, via topological deformation, $D_i$ can be assumed to correspond to a tiling of the plane by regular polygons, quotiented by a lattice. Let $P_i$ be a fundamental domain of this lattice.
Next, choose a star tree $S^* \subset \mathbb{P}^1$ passing through all but one branch point of  $h\circ f_i$.
Let $\tilde{C}_i$ and $\tilde{D}_i$ be the corresponding dessins for $h\circ f_i$ and $h\circ g_i$ respectively. So $\tilde{C}_i$ is the preimage $f_i^{-1}(h^{-1}(S^*))$, with the appropriate vertex coloring. Note $\tilde{D}_i$ again yields a regular tiling of the torus.

By Lemma \ref{lem:base_curve} (and as in the proof of Lemma \ref{lemma:rattranslates}), we can replace the dessins $(C_i)_i$ (resp.~$(D_i)_i$) of $(f_i)_i$ (resp.~$(g_i)_i$) by the preimages under $(f_i)_i$ (resp.\ $(g_i)_i$) of $G:=h^{-1}(S^*)$, so that $(\tilde{C}_i)_i$ are obtained by local changes to  $(\tilde{D}_i)_i$.    

It remains to show the assertion for all members of the family $(T_j)_j$ and not only those corresponding to ramification types of $(h\circ f_i)_i$. To do so, it suffices to show that the family $(\tilde{D}_i)_i$ contains disks of arbitrarily large radius, and apply Proposition \ref{prop:localch1} with sufficiently large $r$. 
To obtain this, 
we claim that the number of polygons in each neighborhood on $\mathbb{T}$ of a vertex in $\tilde{D}_i$ differs from the number of polygons in $D_i$ by at most a constant factor depending only on $h$, and independent of $i\in\NN$. Indeed, as shown in the proof of Lemma \ref{lem:base_curve}, the graph $D_i$ is obtained by gluing copies of $S$ together at the preimages of branch points in a certain way; and $\tilde{D}_i$ is obtained from $D_i$ by replacing each such copy of $S$ by a copy of $G=h^{-1}(S^*)$ and gluing together at the same preimages. Moreover, when a given copy of the star $S$ around a vertex $q_0$ on the torus is replaced by a copy of $G$, then the edges of $G$ may intersect only edges from that copy of $S$, and vice versa. In particular every side of a polygon in $D_i$ is contained in at most a constant amount $d_1$ of polygons in $\tilde D_i$, where $d_1$ is a constant depending only on $h$ and independent of $i\in\NN$.  Since both $D_i$ and $\tilde D_i$ are tilings either by hexagons or by squares, 
it follows that any disk of polygons of radius $r$ in $D_i$ contains a disk of polygons of radius at least $r/d_2$ in $\tilde{D}_i$, where $d_2$ is a constant depending only on $h$ and independent of $i\in\NN$.
 Since by assumption, $(D_i)_i$ contain disks of arbitrary large radius, this implies that $(\tilde{D}_i)_i$ contains disks of arbitrary large radius. 

Let  $c$ be an absolute constant such that  $(\tilde{C}_i)$ is obtained from $(\tilde{D}_i)_i$ by at most $c$ changes, and let $r(c)$ be the constant from Proposition \ref{prop:localch1}. Since $(\tilde D_i)_i$ contains disks of arbitrary large radius, there exists $k\in\NN$ such that $\tilde D_k$ contains a disk of radius at least $r(c)$. Thus, we can apply Proposition \ref{prop:localch1} to realize all ramification data $(T_j)_j$ of sufficiently large degree, by local changes to regular dessins which contain disks of arbitrary large radius. 
%
\end{proof}

\subsection{Proof of Theorem \ref{thm:existence}}
\label{sect:class_1}

We now come to the proof of Theorem \ref{thm:existence}. In fact, we prove the following stronger version for families of almost-regular dessins:
\begin{theorem} 
\label{thm:existence_strong} 
Let $(T_i)_{i\in\NN}$ be a family of almost-regular genus $1$ ramification data of type $[k_1,\ldots,k_r]$, error at most $\eps$, where $T_i$ is not one of four exceptional types (A)-(D) for $i\in\mathbb N$. 
Then all but finitely many members of $(T_i)_i$ are realizable if $\eps\leq 6$, or if $[k_1,\ldots,k_r]\in\{[2,2,2,2],[3,3,3]\}$ and $\eps\leq 10$. Moreover, the realizations can be chosen to be families of almost-regular dessins.
\end{theorem}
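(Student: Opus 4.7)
The plan is to reduce the proof to a finite case analysis on the possible ``irregular parts'' of the family. For a fixed type $[k_1,\ldots,k_r]$ and a fixed error bound $\eps$, the tuples $(A_1,\ldots,A_r)$ of multisets of entries different from $k_j$ with $\sum_j\sum_{a\in A_j}a\leq \eps$ form a finite set. After restricting to suitable subfamilies as in Remark \ref{rem:general-almost}, it suffices to show: for each such tuple $(A_1,\ldots,A_r)$ that does not fall under (A)-(D), there exists some $k\in\NN$ and a dessin $C_k$ realizing the corresponding ramification datum $T_k$ which is obtained from a regular dessin on the torus by a bounded number of changes, where the regular dessin contains a disk of radius $r(c)$ provided by Proposition \ref{prop:localch1}. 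Once this single small-degree realization is constructed, Proposition \ref{prop:localch1} automatically propagates it to all sufficiently large degrees as a family of almost-regular dessins.

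For each of the four base types $[2^*][4^*]^2$, $[2^*][3^*][6^*]$, $[2^*]^4$, $[3^*]^3$, the base regular dessin is one of the four tilings in Figure \ref{standard}. The concrete construction is to fix a sufficiently large regular tiling, locate a pair of adjacent vertices (or a face) in the interior of a large disk of polygons, and perform the bounded modification realizing the chosen irregular part $(A_1,\ldots,A_r)$ within a neighborhood of bounded radius. Elementary moves such as edge insertion, the pendant-edge construction of Lemma \ref{lemma:addlines} (figure \ref{add_edges}), and subdivisions of single polygons give a flexible toolkit for building most irregular patterns. The computation checking the Riemann--Hurwitz formula is built into the almost-regular setup, so the only real task is to exhibit, for each allowed irregular pattern, one explicit local modification of a regular tiling.

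To cut down on the case analysis, I would use the composition reductions. By Lemma \ref{lem:glue2}, post-composing a family of almost-regular dessins of type $[3,3,3]$ with the degree-two map $x\mapsto x^2$ branched appropriately (as in the example preceding Lemma \ref{lemma:rattranslates}) yields the corresponding almost-regular families of type $[2^*][3^*][6^*]$, and analogously a composition produces type $[2^*][4^*]^2$ out of $[2^*]^4$. Thus the main work concentrates on the two ``generating'' types $[3,3,3]$ and $[2,2,2,2]$, which is why a larger error bound $\eps \leq 10$ is attainable there: any additional pattern appearing with $\eps\leq 6$ in the other two types can be traced back, via composition, to a pattern in one of these two with effectively smaller error. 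The four exceptional tuples (A)-(D) are precisely the irregular patterns for which no local modification can exist (by the Do--Zieve / Corvaja--Zannier obstruction), and the hypothesis of the theorem excludes exactly these.

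The main obstacle will be the explicit verification of the finitely many cases, particularly those patterns which sit close to the exceptional types (A)-(D) and might at first look obstructed. For these borderline cases one must either exhibit a concrete local modification in the correct regular tiling, or reroute through a composition to reduce to an already-settled case. Two technical points need care: first, verifying that each chosen modification is contained in a neighborhood small compared to the radius $r(c)$ of the disk provided by Lemma \ref{notstrip}, so that Proposition \ref{prop:localch1} applies; and second, ensuring that the degree-congruence conditions arising from Lemma \ref{notstrip} (number $n$ of hexagons or squares) are compatible with the congruence conditions implicit in the family $(T_i)_{i\in\NN}$, so that ``all sufficiently large compatible degrees'' are indeed covered. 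Once these case-by-case constructions are recorded, the theorem follows by combining them with Proposition \ref{prop:localch1} and Lemma \ref{lem:glue2}.
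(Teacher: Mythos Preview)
Your overall strategy matches the paper's: enumerate the finitely many irregular patterns, realize each one once by a bounded modification of a regular torus tiling, invoke Proposition~\ref{prop:localch1} to propagate, and use composition (Lemma~\ref{lem:glue2}) and Lemma~\ref{lemma:addlines} to cut down the casework. The paper carries this out by listing all $43$ admissible families explicitly and handling each by either a direct drawing or a reduction.

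There is, however, one oversimplification in your plan. You assert that composition with $x\mapsto x^2$ reduces every $[2,3,6]$- and $[2,4,4]$-almost-regular pattern with $\eps\le 6$ back to a $[3,3,3]$- or $[2,2,2,2]$-pattern. This is not true in general: for instance the family $[1,3,2^*][3^*][6^*]$ has its irregularity in the $[2^*]$-partition, which under the composition $x^2$ corresponds to the \emph{unramified} fiber of the outer map, so no choice of inner $[3,3,3]$-covering produces it. The paper does not reduce this case (nor $[3,2^*][3^*][3,6^*]$, $[2^*][2,4,3^*][6^*]$, $[1,3,2^*][4^*][4^*]$, or $[3^*][3^*][1,5,3^*]$) via composition; instead it draws each directly as a modification of an $n\times 1$ strip of hexagons (a ``nested'' dessin), where realizability in all degrees is obtained by adding one hexagon at a time rather than by invoking Proposition~\ref{prop:localch1}. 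These strip dessins are still almost-regular in the sense of Definition~\ref{def:close}, but the underlying regular family does \emph{not} contain disks of arbitrary radius, so your proposed uniform route through Proposition~\ref{prop:localch1} would not cover them. You should add this second construction technique to your toolkit for the handful of cases where the irregularity sits in a partition that composition cannot reach.
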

\begin{proof}
A straightforward check using magma shows that all almost-regular families in genus 1 satisfying the Riemann-Hurwitz formula  with $\varepsilon \leq 6$, and  $\varepsilon \leq 10$ for  $[3^*][3^*][3^*]$ and $[2^*][2^*][2^*][2^*]$ are as follows: 







\begin{paracol}{2} 
\begin{tabular}{ccc}
(1) & [NR] & $[3^*][3^*][3^*]$ \\
(2) & [NR] & $[3^*][3^*][1,5,3^*]$ \\
(3) & [NE] & $[3^*][3^*][2,4,3^*]$ \\
(4) & [NR] & $[1, 3^*][1, 3^*][7, 3^*]$ \\
(5) & [R1]  
&$[1, 3^*][4, 3^*][4, 3^*]$ \\
(6) & [NR] & $[2, 3^*][2, 3^*][5, 3^*]$ \\
(7) & [R4]  
& $[3^*][3^*][1,1,7,3^*]$ \\
(8) & [NR]  & $[3^*][3^*][1,2,6,3^*]$ \\
(9) & [R5]  
&$[3^*][3^*][1,4,4,3^*]$ \\
(10) & [R6]  
& $[3^*][3^*][2,2,5,3^*]$ \\
(11) & [NR]  &$[3^*][1,2,3^*][6,3^*]$ \\
& & \\
(12) & [NR]  & $[2^*][3^*][6^*]$\\
(13) & [NR]  & $[1,3,2^*] [ 3^*] [6^*]$\\
(14) & [NR]  & $[3,2^*] [ 3^*] [3,6^*]$  \\
(15) & [R2]  
&$[2^*] [1,5, 3^*] [6^*]$ \\
(16) & [NR]  &$[2^*] [ 2,4,3^*] [6^*]$ \\
(17) & [R24]  
&$[1,1,4,2^*] [ 3^*] [6^*]$ \\
& & \\
(18) & [NR]  &$[2^*][4^*][4^*]$\\
(19) & [NR]  & $[1,3,2^*][4^*] [4^*]$\\
(20) & [R24]   
& $[1,1,4,2^*][4^*] [4^*]$ 
\end{tabular}
%
%
\begin{tabular}{ccc}
(21) & [NR]  &$[2^*][2^*][2^*][2^*]$\\
(22) & [NE]  &$[2^*][2^*][2^*][1,3,2^*]$\\
(23) & [R24] 
& $[2^*][2^*][2^*][1,1,4,2^*]$\\ 
(24) & [NR]  &$[2^*][2^*][1,1,2^*][4,2^*]$\\ 
(25) & [NR]  &$[1,2^*][1,2^*][1,2^*][5,2^*]$\\ 
(26) & [R21]   
&$[1,2^*][1,2^*][3,2^*][3,2^*]$\\ 
(27) & [R26]  
&$[2^*][2^*][1,1,2^*][3,3,2^*]$\\
(28) & [R26]   
&$[2^*][2^*][1,3,2^*][1,3,2^*]$\\ 
(29) & [R25]  
&$[1,5,2^*][2^*] [ 2^*] [1^2, 2^*]$\\ 
(30) & [R29]   
&$[1^3,5,2^*][2^*][ 2^*] [2^*] $\\
(31) & [R27]   
&$[1^2,3^2,2^*][2^*] [2^*] [2^*]$\\
(32) & [R23]  
& $[1,2^*][1,2^*][1,4,2^*][3,2^*]$\\ 
(33) & [NR]  &$[2^*][1,1,2^*][1,1,2^*][6,2^*]$ \\
(34) & [R37]  
&$[2^*][2^*][1,1,1,3,2^*][4,2^*]$ \\
(35) & [R32] 
& $[2^*][2^*][1,1,4,2^*][1,3,2^*]$ \\ 
(36) & [R33]  
&$[2^*][2^*][1^4, 2^*][6,2^*]$\\
(37) & [NR]  &$[2^*][1,1,2^*][1,3,2^*][4,2^*]$\\
(38) & [R42]  
&$[1^2, 6, 2^*][2^*][2^*][1^2,2^*]$\\ 
(39) & [R36]  
&$[1^4,6,2^*][2^*][ 2^*] [2^*] $\\ 
(40) & [R37] 
&$ [1, 3,4, 2^*][2^*][ 2^*][1^2,2^*]$ \\ 
(41) & [R34] 
& $[1^3,3,4,2^*][2^*] [2^*] [2^*]$ \\ 
(42) & [NR]  &$[1,6,2^*][1,2^*][1,2^*][1,2^*]$ \\
(43) & [NR]  &$[3,4,2^*][1,2^*][1,2^*][1,2^*]$ 
\end{tabular}
\end{paracol} 

We have marked types NR for ``no reduction", R for ``reduction" followed by the number of a type that one may reduce to as justified in the proof below, and NE for ``non-existent".  Notice that NR really ought to stand for ``no reduction given," as it may be possible to reduce types in a different way than we have done here.  

 First, we show existence of all ramification types marked ``NR". 
For each ramification data we draw a dessin.  We use two essentially different approaches for drawings:\\
Most families are drawn as part of a changed regular tiling, so it clear how to extend them to larger tilings containing disks of arbitrary radius. Proposition \ref{prop:localch1} then implies the existence of almost-regular dessins of arbitrary sufficiently large degree for each ramification data with such corresponding drawing. \\
The second type of drawings is ``nested" (NR 2, 3, 13, 14, 16, and 19) . These are drawn as changes to an $n \times 1$ strip of tiles, here visualized in a fundamental domain where the top and bottom, as well as left and right sides of the drawing are identified. In this case, it is easy to see how to add one hexagon at a time. This implies that all drawings are realizable in all degrees, completing the proof for all data labeled NR.  

\begin{figure}[h!] 
\flushleft
\begin{subfigure}[b]{0.1\textwidth}
\begin{tikzpicture}[scale=.4]
\draw(-1,0) -- (0,0);
\draw(0,0)--(1,1);
\draw(1,1)--(2,1);
\draw(2,1)--(3,2);
\draw(3,2)--(2,3);
\draw(1,1)--(0,2);
\draw(0,2)--(1,3);
\draw(1,3)--(2,3);
\draw(1,3)--(0,4);
\draw(-1,4)--(0,4);
\draw(-1,4)--(-2,3);
\draw(-2,3)--(-3,3);
\draw(-3,3)--(-4,2);
\draw(-4,2)--(-3,1);
\draw(-3,1)--(-2,1);
\draw(-2,1)--(-1,2);
\draw(-1,2)--(-2,3);
\draw(-1,2)--(-0.5,1.5);
\draw(-2,1)--(-1,0);
\draw(0,2)--(-0.5,2.5);
\fill[blue] (0,0) circle (4pt);
\fill[blue] (-0.5,1.5) circle (4pt);
\fill[blue](2,1) circle (4pt);
\fill[blue](2,3) circle (4pt);
\fill[blue](0,2) circle (4pt);
\fill[blue](0,4) circle (4pt);
\fill[blue](-2,1) circle (4pt);
\fill[blue](-2,3) circle (4pt);
\fill[blue](-4,2) circle (4pt);
\fill[red](3,2) circle (4pt);
\fill[red](1,1) circle (4pt);
\fill[red](1,3) circle (4pt);
\fill[red](-1,0) circle (4pt);
\fill[red](-1,2) circle (4pt);
\fill[red](-1,4) circle (4pt);
\fill[red](-3,1) circle (4pt);
\fill[red](-3,3) circle (4pt);
\fill[red](-0.5,2.5) circle (4pt);
\end{tikzpicture} 
\caption*{NR4: $[1,3^*][1, 3^*][7 ,3^*]$}
\end{subfigure} 
\hspace{0.2\textwidth}
\begin{subfigure}[b]{0.1\textwidth}
\begin{tikzpicture}[scale=.4]
\draw(-1,0) -- (0,0);
\draw(0,0)--(1,1);
\draw(1,1)--(2,1);
\draw(2,1)--(3,2);
\draw(3,2)--(2,3);
\draw(1,1)--(0,2);
\draw(0,2)--(1,3);
\draw(1,3)--(2,3);
\draw(1,3)--(0,4);
\draw(-1,4)--(0,4);
\draw(-1,4)--(-2,3);
\draw(-2,3)--(-3,3);
\draw(-3,3)--(-4,2);
\draw(-4,2)--(-3,1);
\draw(-3,1)--(-2,1);
\draw(-2,1)--(-1,2);
\draw(-1,2)--(-2,3);
\draw(-2,1)--(-1,0);
\fill[blue] (0,0) circle (4pt);
\fill[blue](2,1) circle (4pt);
\fill[blue](2,3) circle (4pt);
\fill[blue](0,2) circle (4pt);
\fill[blue](0,4) circle (4pt);
\fill[blue](-2,1) circle (4pt);
\fill[blue](-2,3) circle (4pt);
\fill[blue](-4,2) circle (4pt);
\fill[red](3,2) circle (4pt);
\fill[red](1,1) circle (4pt);
\fill[red](1,3) circle (4pt);
\fill[red](-1,0) circle (4pt);
\fill[red](-1,2) circle (4pt);
\fill[red](-1,4) circle (4pt);
\fill[red](-3,1) circle (4pt);
\fill[red](-3,3) circle (4pt);
\end{tikzpicture} 
\caption*{NR6: $[2 ,3^*][2, 3^*][5 ,3^*]$}
\end{subfigure} 
\hspace{0.2\textwidth}
\begin{subfigure}[b]{0.1\textwidth}
\begin{tikzpicture}[scale=.4]
\draw(-1,0) -- (0,0);
\draw(3,0)--(2,1);
\draw(1,1)--(2,1);
\draw(2,1)--(3,2);
\draw(3,2)--(2,3);
\draw(1,1)--(0,2);
\draw(0,2)--(1,3);
\draw(1,3)--(2,3);
\draw(1,3)--(0,4);
\draw(-1,4)--(0,4);
\draw(-1,4)--(-2,3);
\draw(-2,3)--(-3,3);
\draw(-3,3)--(-4,2);
\draw(-4,2)--(-3,1);
\draw(-3,1)--(-2,1);
\draw(-2,1)--(-1,2);
\draw(-1,2)--(-2,3);
\draw(-1,2)--(0,0);
\draw(-1,0)--(-2,1);
\draw(3,0)--(2,-1);
\draw(1,-1)--(2,-1);
\draw(0,0)--(1,-1);
\draw(0,2)to[bend right](1,1);
\fill[blue] (0,0) circle (4pt);
\fill[blue](2,1) circle (4pt);
\fill[blue](2,3) circle (4pt);
\fill[blue](0,2) circle (4pt);
\fill[blue](0,4) circle (4pt);
\fill[blue](-2,1) circle (4pt);
\fill[blue](-2,3) circle (4pt);
\fill[blue](-4,2) circle (4pt);
\fill[red](3,2) circle (4pt);
\fill[red](1,1) circle (4pt);
\fill[red](1,3) circle (4pt);
\fill[red](-1,0) circle (4pt);
\fill[red](-1,2) circle (4pt);
\fill[red](-1,4) circle (4pt);
\fill[red](-3,1) circle (4pt);
\fill[red](-3,3) circle (4pt);
\fill[red](1,-1) circle (4pt);
\fill[red](3,0) circle (4pt);
\fill[blue](2,-1) circle (4pt);
\end{tikzpicture} 
\caption*{NR8: $[3^*][3^*][1,2,6, 3^*]$}
\end{subfigure}
\end{figure} 

\begin{figure}[h!] 
\flushleft
\begin{subfigure}[b]{0.1\textwidth}
\begin{tikzpicture}[scale=.4]
\draw(-1,0) -- (0,0);
\draw(0,0)--(1,1);
\draw(1,1)--(2,1);
\draw(2,1)--(3,2);
\draw(3,2)--(2,3);
\draw(1,1)--(0,2);
\draw(0,2)--(1,3);
\draw(1,3)--(2,3);
\draw(1,3)--(0,4);
\draw(-1,4)--(0,4);
\draw(-1,4)--(-2,3);
\draw(-2,3)--(-3,3);
\draw(-3,3)--(-4,2);
\draw(-4,2)--(-3,1);
\draw(-3,1)--(-2,1);
\draw(-2,1)--(-1,2);
\draw(-1,2)--(-2,3);
\draw(-2,1)--(-1,0);
\draw(0,2)--(-0.5,2.5);
\fill[blue] (0,0) circle (4pt);
\fill[blue](2,1) circle (4pt);
\fill[blue](2,3) circle (4pt);
\fill[blue](0,2) circle (4pt);
\fill[blue](0,4) circle (4pt);
\fill[blue](-2,1) circle (4pt);
\fill[blue](-2,3) circle (4pt);
\fill[blue](-4,2) circle (4pt);
\fill[red](3,2) circle (4pt);
\fill[red](1,1) circle (4pt);
\fill[red](1,3) circle (4pt);
\fill[red](-1,0) circle (4pt);
\fill[red](-1,2) circle (4pt);
\fill[red](-1,4) circle (4pt);
\fill[red](-3,1) circle (4pt);
\fill[red](-3,3) circle (4pt);
\fill[red](-0.5,2.5) circle (4pt);
\end{tikzpicture} 
\caption*{NR11: $[3^*][1,2, 3^*][6, 3^*]$}
\end{subfigure} 
\hspace{0.2\textwidth}
\begin{subfigure}[b]{0.1\textwidth}
\begin{tikzpicture}[scale=.3]
\draw(2,0) -- (0,0);
\draw(2,0)--(4,2);
\draw(4,2)--(2,4);
\draw(2,4)--(4,6);
\draw(4,6)--(2,8);
\draw(2,8)--(0,8);
\draw(0,8)--(-2,6);
\draw(-2,6)--(-4,6);
\draw(-4,6)--(-6,4);
\draw(-6,4)--(-4,2);
\draw(-4,2)--(-2,2);
\draw(-2,2)--(0,0);
\draw(-2,6)--(0,4);
\draw(-2,2)--(0,4);
\draw(4,6)--(6,6);
\draw(6,6)--(8,4);
\draw(8,4)--(6,2);
\draw(6,2)--(4,2);
\draw(2,4)--(1,5);
\draw(0,4)--(1,3);
\fill[blue](-1,1) circle (4pt);
\fill[blue](3,7) circle (4pt);
\fill[blue](3,3) circle (4pt);
\fill[blue](-5,3) circle (4pt);
\fill[blue](-1,5) circle (4pt);
\fill[blue](7,5) circle (4pt);
\fill[red](-3,2) circle (4pt);
\fill[red](-3,6) circle (4pt);
\fill[red](1,8) circle (4pt);
\fill[red](1,0) circle (4pt);
\fill[red](5,6) circle (4pt);
\fill[red](5,2) circle (4pt);
\fill[red](1,5) circle (4pt);
\fill[red](1,3) circle (4pt);
\fill[green](-5,5) circle (4pt);
\fill[green](-1,3) circle (4pt);
\fill[green](-1,7) circle (4pt);
\fill[green](3,5) circle (4pt);
\fill[green](3,1) circle (4pt);
\fill[green](7,3) circle (4pt);
\end{tikzpicture} 
\caption*{NR24: $[2^*][2^*][1,1, 2^*][4 ,2^*]$}
\end{subfigure} 
\hspace{0.2\textwidth}
\begin{subfigure}[b]{0.1\textwidth}
\begin{tikzpicture}[scale=.3]
\draw(2,0) -- (0,0);
\draw(2,0)--(4,2);
\draw(4,2)--(2,4);
\draw(2,4)--(4,6);
\draw(4,6)--(2,8);
\draw(2,8)--(0,8);
\draw(0,8)--(-2,6);
\draw(-2,6)--(-4,6);
\draw(-4,6)--(-6,4);
\draw(-6,4)--(-4,2);
\draw(-4,2)--(-2,2);
\draw(-2,2)--(0,0);
\draw(-2,6)--(-1,5);
\draw(2,4)--(0,4);
\draw(-2,2)--(-1,3);
\fill[blue] (-5,3) circle (4pt);
\fill[blue] (-1,5) circle (4pt);
\fill[blue](-1,1) circle (4pt);
\fill[blue](3,7) circle (4pt);
\fill[blue](3,3) circle (4pt);
\fill[red](-3,2) circle (4pt);
\fill[red](-3,6) circle (4pt);
\fill[red](1,8) circle (4pt);
\fill[red](0,4) circle (4pt);
\fill[red](1,0) circle (4pt);
\fill[green](-5,5) circle (4pt);
\fill[green](-1,3) circle (4pt);
\fill[green](-1,7) circle (4pt);
\fill[green](3,5) circle (4pt);
\fill[green](3,1) circle (4pt);
\end{tikzpicture} 
\caption*{NR25: $[1 ,2^*][1 ,2^*][1 ,2^*][5, 2^*]$}
\end{subfigure} 

\end{figure} 

\begin{figure}[h!] 
\flushleft
\begin{subfigure}[b]{0.1\textwidth}
\begin{tikzpicture}[scale=.3]
\draw(2,0) -- (0,0);
\draw(2,0)--(4,2);
\draw(4,2)--(3,3);
\draw(3,5)--(4,6);
\draw(4,6)--(2,8);
\draw(2,8)--(0,8);
\draw(0,8)--(-2,6);
\draw(-2,6)--(-4,6);
\draw(-4,6)--(-6,4);
\draw(-6,4)--(-4,2);
\draw(-4,2)--(-2,2);
\draw(-2,2)--(0,0);
\draw(-2,6)--(-1,5);
\draw(-2,2)--(-1,3);
\draw(4,6)--(6,6);
\draw(6,6)--(8,4);
\draw(8,4)--(6,2);
\draw(6,2)--(4,2);
\fill[blue](-1,1) circle (4pt);
\fill[blue](3,7) circle (4pt);
\fill[blue](3,3) circle (4pt);
\fill[blue](-5,3) circle (4pt);
\fill[blue](-1,5) circle (4pt);
\fill[blue](7,5) circle (4pt);
\fill[red](-3,2) circle (4pt);
\fill[red](-3,6) circle (4pt);
\fill[red](1,8) circle (4pt);
\fill[red](1,0) circle (4pt);
\fill[red](5,6) circle (4pt);
\fill[red](5,2) circle (4pt);
\fill[green](-5,5) circle (4pt);
\fill[green](-1,3) circle (4pt);
\fill[green](-1,7) circle (4pt);
\fill[green](3,5) circle (4pt);
\fill[green](3,1) circle (4pt);
\fill[green](7,3) circle (4pt);
\end{tikzpicture} 
\caption*{NR33: $[2^*][11 2^*][11 2^*][6 2^*]$}
\end{subfigure} 
\hspace{0.4\textwidth}
\begin{subfigure}[b]{0.1\textwidth}
\begin{tikzpicture}[scale=.3]
\draw(2,0) -- (0,0);
\draw(2,0)--(4,2);
\draw(4,2)--(2,4);
\draw(4,6)--(2,8);
\draw(0,8)--(-2,6);
\draw(-2,6)--(-4,6);
\draw(-4,6)--(-6,4);
\draw(-6,4)--(-4,2);
\draw(-4,2)--(-2,2);
\draw(-2,2)--(0,0);
\draw(-2,6)--(0,4);
\draw(-2,2)--(0,4);
\draw(4,6)--(6,6);
\draw(6,6)--(8,4);
\draw(8,4)--(6,2);
\draw(6,2)--(4,2);
\draw(6,6)--(8,8);
\draw(8,8)--(6,10);
\draw(6,10)--(4,10);
\draw(4,10)--(2,8);
\draw(4,10)--(2,12);
\draw(2,12)--(0,12);
\draw(0,12)--(-2,10);
\draw(-2,10)--(-4,10);
\draw(-2,10)--(0,8);
\draw(-4,10)--(-6,8);
\draw(-6,8)--(-4,6);
\draw(2,8)--(1.5,9);
\draw(2,4)to[bend right](-1,11);
\draw(0,8)--(0.5,9);
\draw(2,4)--(0,4);
\draw(4,6)--(4.5,5);
\fill[blue](-1,1) circle (4pt);
\fill[blue](3,7) circle (4pt);
\fill[blue](3,3) circle (4pt);
\fill[blue](-5,3) circle (4pt);
\fill[blue](-1,5) circle (4pt);
\fill[blue](7,5) circle (4pt);
\fill[blue](-5,7) circle (4pt);
\fill[blue](-1,9) circle (4pt);
\fill[blue](3,11) circle (4pt);
\fill[blue](7,9) circle (4pt);
\fill[red](-3,2) circle (4pt);
\fill[red](-3,6) circle (4pt);
\fill[red](1,0) circle (4pt);
\fill[red](1,4) circle (4pt);
\fill[red](5,6) circle (4pt);
\fill[red](5,2) circle (4pt);
\fill[red](1.5,9) circle (4pt);
\fill[red](-3,10) circle (4pt);
\fill[red](1,12) circle (4pt);
\fill[red](5,10) circle (4pt);
\fill[red](0.5,9) circle (4pt);
\fill[green](-1,3) circle (4pt);
\fill[green](4.5,5) circle (4pt);
\fill[green](-1,7) circle (4pt);
\fill[green](3,1) circle (4pt);
\fill[green](7,3) circle (4pt);
\fill[green](-5,9) circle (4pt);
\fill[green](-1,11) circle (4pt);
\fill[green](3,9) circle (4pt);
\fill[green](7,7) circle (4pt);
\fill[green](-5,5) circle (4pt);
\end{tikzpicture} 
\caption*{NR37: $[2^*][1,1, 2^*][1,3, 2^*][4, 2^*]$}
\end{subfigure} 
\end{figure}

\begin{figure}[h!]  
\flushleft
\begin{subfigure}[b]{0.1\textwidth}
\begin{tikzpicture}[scale=.3]
\draw(2,0) -- (0,0);
\draw(2,0)--(4,2);
\draw(4,2)--(2,4);
\draw(2,4)--(4,6);
\draw(4,6)--(2,8);
\draw(-1,7)--(-2,6);
\draw(-2,6)--(-4,6);
\draw(-4,6)--(-6,4);
\draw(-6,4)--(-4,2);
\draw(-4,2)--(-2,2);
\draw(-2,2)--(0,0);
\draw(-2,6)--(0,4);
\draw(-2,2)--(0,4);
\draw(4,6)--(6,6);
\draw(6,6)--(8,4);
\draw(8,4)--(6,2);
\draw(6,2)--(4,2);
\draw(6,6)--(8,8);
\draw(8,8)--(6,10);
\draw(6,10)--(4,10);
\draw(4,10)--(2,8);
\draw(4,10)--(2,12);
\draw(2,12)--(0,12);
\draw(0,12)--(-2,10);
\draw(-2,10)--(-4,10);
\draw(-2,10)--(-1,9);
\draw(-4,10)--(-6,8);
\draw(-6,8)--(-4,6);
\draw(2,8)--(2,4);
\draw(0,4)--(1,3);
\fill[blue](-1,1) circle (4pt);
\fill[blue](3,7) circle (4pt);
\fill[blue](3,3) circle (4pt);
\fill[blue](-5,3) circle (4pt);
\fill[blue](-1,5) circle (4pt);
\fill[blue](7,5) circle (4pt);
\fill[blue](-5,7) circle (4pt);
\fill[blue](-1,9) circle (4pt);
\fill[blue](3,11) circle (4pt);
\fill[blue](7,9) circle (4pt);
\fill[red](-3,2) circle (4pt);
\fill[red](2,6) circle (4pt);
\fill[red](-3,6) circle (4pt);
\fill[red](1,0) circle (4pt);
\fill[red](5,6) circle (4pt);
\fill[red](5,2) circle (4pt);
\fill[red](1,3) circle (4pt);
\fill[red](-3,10) circle (4pt);
\fill[red](1,12) circle (4pt);
\fill[red](5,10) circle (4pt);
\fill[green](-1,3) circle (4pt);
\fill[green](-1,7) circle (4pt);
\fill[green](3,5) circle (4pt);
\fill[green](3,1) circle (4pt);
\fill[green](7,3) circle (4pt);
\fill[green](-5,9) circle (4pt);
\fill[green](-1,11) circle (4pt);
\fill[green](3,9) circle (4pt);
\fill[green](7,7) circle (4pt);
\fill[green](-5,5) circle (4pt);
\end{tikzpicture} 
\caption*{NR42: $[1,6, 2^*][1,2^*]^{3}$}
\end{subfigure} 
\hspace{0.4\textwidth}
\begin{subfigure}[b]{0.1\textwidth}
\begin{tikzpicture}[scale=.3]
\draw(2,0) -- (0,0);
\draw(2,0)--(4,2);
\draw(4,2)--(2,4);
\draw(2,4)--(4,6);
\draw(4,6)--(2,8);
\draw(-3,6)--(-4,6);
\draw(-4,6)--(-6,4);
\draw(-6,4)--(-4,2);
\draw(-4,2)--(-2,2);
\draw(-2,2)--(0,0);
\draw(4,10)--(2,8);
\draw(4,10)--(2,12);
\draw(2,12)--(0,12);
\draw(0,12)--(-2,10);
\draw(-2,10)--(-4,10);
\draw(-2,10)--(0,8);
\draw(-4,10)--(-6,8);
\draw(-6,8)--(-4,6);
\draw(2,8)--(0,8);
\draw(0,0)--(-2,-2);
\draw(-2,-2)--(-4,-2);
\draw(-4,-2)--(-6,0);
\draw(-2,2)--(0,8);
\draw(-6,0)--(-4,2);
\draw(2,4)--(0.5,4);
\draw(1,5)--(0.5,4);
\draw(0,3)--(0.5,4);
\fill[blue](-1,1) circle (4pt);
\fill[blue](1,5) circle (4pt);
\fill[blue](3,7) circle (4pt);
\fill[blue](3,3) circle (4pt);
\fill[blue](-5,3) circle (4pt);
\fill[blue](-5,7) circle (4pt);
\fill[blue](-1,9) circle (4pt);
\fill[blue](3,11) circle (4pt);
\fill[blue](-5,-1) circle (4pt);
\fill[red](-3,2) circle (4pt);
\fill[red](-3,6) circle (4pt);
\fill[red](1,0) circle (4pt);
\fill[red](1,4) circle (4pt);
\fill[red](1,8) circle (4pt);
\fill[red](-3,-2) circle (4pt);
\fill[red](-3,10) circle (4pt);
\fill[red](1,12) circle (4pt);
\fill[green](-1,5) circle (4pt);
\fill[green](0,3) circle (4pt);
\fill[green](-5,1) circle (4pt);
\fill[green](-1,-1) circle (4pt);
\fill[green](3,5) circle (4pt);
\fill[green](3,1) circle (4pt);
\fill[green](-5,9) circle (4pt);
\fill[green](-1,11) circle (4pt);
\fill[green](3,9) circle (4pt);
\fill[green](-5,5) circle (4pt);
\end{tikzpicture} 
\caption*{NR43: $[3,4, 2^*][1,2^*]^{3}$}
\end{subfigure} 
\end{figure}

\begin{figure}[h!] 
\begin{subfigure}[b]{0.1\textwidth}
\begin{tikzpicture}[scale=.4]
\draw(0,6)--(0,0);
\draw(0,7)--(0,9);
\draw(-1,2)--(2,2);
\draw(-1,4)--(2,4);
\draw(-1,6)--(2,6);
\draw(-1,8)--(2,8);
\draw(1,8)to[bend left](0,6);
\fill[blue](1,8) circle (4pt);
\fill[blue](0,7) circle (4pt);
\fill[blue](1,6) circle (4pt);
\fill[blue](0,5) circle (4pt);
\fill[blue](1,4) circle (4pt);
\fill[blue](0,1) circle (4pt);
\fill[blue](1,2) circle (4pt);
\fill[blue](0,3) circle (4pt);
\fill[red](0,8) circle (4pt);
\fill[red](0,6) circle (4pt);
\fill[red](0,4) circle (4pt);
\fill[red](0,2) circle (4pt);
\end{tikzpicture} 
\caption*{NR19: $[1,3, 2^*][4^*][4^*]$}
\end{subfigure}
\hspace{0.2\textwidth}
\begin{subfigure}[b]{0.1\textwidth}
\begin{tikzpicture}[scale=.4]
\draw(-2,-1)--(0,1);
\draw(2,-1)--(-1,2);
\draw(-3,2)--(0,5);
\draw(2,3)--(-2,7);
\draw(-3,6)--(0,9);
\draw(-1,10)--(2,7);
\draw(-2,3)to[bend left](1,0);
\fill[blue](1,8) circle (4pt);
\fill[blue](-1,8) circle (4pt);
\fill[blue](-1,6) circle (4pt);
\fill[blue](-1,4) circle (4pt);
\fill[blue](1,4) circle (4pt);
\fill[blue](-1,2) circle (4pt);
\fill[blue](1,0) circle (4pt);
\fill[red](0,9) circle (4pt);
\fill[red](-2,7) circle (4pt);
\fill[red](0,5) circle (4pt);
\fill[red](0,1) circle (4pt);
\fill[red](-2,3) circle (4pt);
\end{tikzpicture} 
\caption*{NR13: $[1,3 ,2^*][3^*][6^*]$}
\end{subfigure}
\hspace{0.2\textwidth}
\begin{subfigure}[b]{0.1\textwidth}
\begin{tikzpicture}[scale=.4]
\draw(-2,-1)--(0,1);
\draw(-2,3)--(2,-1);
\draw (-2, 3) -- ( -3, 2); 
\draw(-1,4)--(0,5);
\draw (2,3)-- (0, 5); 
\draw(-3,6)--(0,9);
\draw(-1,10)--(2,7);
\draw (-2,7) -- (-2,3) ; 
\draw (-1, 4) to (-2,3); 
\fill[blue](1,8) circle (4pt);
\fill[blue](-1,8) circle (4pt);
\fill[blue](-1,4) circle (4pt);
\fill[blue](1,4) circle (4pt);
\fill[blue](-1,2) circle (4pt);
\fill[blue](1,0) circle (4pt);
\fill[blue](-2,5) circle (4pt);
\fill[red](0,9) circle (4pt);
\fill[red](-2,7) circle (4pt);
\fill[red](0,5) circle (4pt);
\fill[red](0,1) circle (4pt);
\fill[red](-2,3) circle (4pt);
\end{tikzpicture} 
\caption*{NR16: $[2^*][2,4, 3^*][6^*]$}
\end{subfigure}
\end{figure}

\begin{figure} 

\begin{subfigure}[b]{0.1\textwidth}
\begin{tikzpicture}[scale=.4]
\draw(-2,-1)--(0,1);
\draw(-2,3)--(2,-1);
\draw(-2,3)--(-3,2);
\draw(-2,3)--(0,5);
\draw(2,3)--(-2,7);
\draw(-3,6)--(0,9);
\draw (-1, 0 ) -- (-1, -2); 
\draw ( -1, -2) -- (-2, -3) ; 
\draw ( -1, -2) -- ( 1,0); 
\fill[blue](-1,8) circle (4pt);
\fill[blue](-1,6) circle (4pt);
\fill[blue](-1,4) circle (4pt);
\fill[blue](1,4) circle (4pt);
\fill[blue](-1,2) circle (4pt);
\fill[blue](1,0) circle (4pt);
\fill[blue](-1,-1) circle (4pt);
\fill[blue](-.5,.5) circle (4pt);
\fill[red](-2,7) circle (4pt);
\fill[red](0,5) circle (4pt);
\fill[red](0,1) circle (4pt);
\fill[red](-1,0) circle (4pt);
\fill[red](-1,-2) circle (4pt);
\fill[red](-2,3) circle (4pt);
\end{tikzpicture} 
\caption*{NR14: $[3 ,2^*][3^*][3 ,6^*]$}
\end{subfigure}
\hspace{0.4\textwidth}
\begin{subfigure}[b]{0.1\textwidth}
\begin{tikzpicture}[scale=.3]
\draw(-1,10)--(2,7);
\draw(-2,3)--(0,1);
\draw(-3,2)--(0,5);
\draw(1,4)--(-2,7);
\draw(-3,6)--(0,9);
\draw(3,-6)--(0,-3);
\draw(0,-3)--(-3,-6);
\draw(-2,-5)--(-1,-6);
\draw(0,1)to[bend right](0,-1);
\draw(0,1)to[bend left](0,-1);
\draw(0,-1)--(0,-3);
\fill[blue](-2,7) circle (4pt);
\fill[blue](-2,3) circle (4pt);
\fill[blue](0,-1) circle (4pt);
\fill[blue](-2,-5) circle (4pt);
\fill[red](0,9) circle (4pt);
\fill[red](0,5) circle (4pt);
\fill[red](0,1) circle (4pt);
\fill[red](0,-3) circle (4pt);
\end{tikzpicture} 
\caption*{NR2: $[3^*][3^*][1,5, 3^*]$}
\end{subfigure}

\end{figure} 

\newpage

Non-existence of types marked NE is shown is section \ref{nonex:genus1}.

We next show the existence of all the types marked R  by reducing to types marked NR, thereby completing the proof of Theorem \ref{thm:existence_strong}. 
The data 5,26,32 is reduced by Lemma \ref{lemma:addlines} applied with $k=1$. 
For the rest of the cases marked ``R" we use Lemma \ref{lemma:rattranslates}. 
Case 15 is reduced to case 2 as follows. 
Let $g$ be the genus-$0$ covering of the sphere given by $x\mapsto x^2$, and let $f$ be a covering with ramification type $[1,5,3^*][3^*][3^*]$ and branch points (e.g.) $1$, $-1$, $\infty$ (in this order). Then  the composition $g\circ f$ has branch points $0$, $1$, $\infty$ and ramification type $[2^*][1,5,3^*][6^*]$. Since the first type has already been realized in all degrees divisible by $3$, this argument yields realizability of the second type in all degrees divisible by $6$, which is all possible degrees.

For all other cases in which we apply Lemma \ref{lemma:rattranslates}, we reduce to families of dessins which contain disks of arbitrary large radius. 
Existence for types with at least two occurrences of the partition $[2^*]$ are reduced as follows. 
Let $g$ be the genus-$0$ covering of the sphere given by $x\mapsto x^2$, and let $f$ be a covering with ramification type that we reduce to, and assume its branch points are $1$, $-1$, $\sqrt{-1}$, $-\sqrt{-1}$ in this order. Then  the composition $g\circ f$ has two branch points, $0$ and $\infty$, with all ramification indices equal to $2$, whereas two branch points the two branch points $1,-1$ (resp.~$\sqrt{-1}$, $-\sqrt{-1}$) of $f$ merge into one branch point of $g\circ f$. By Lemma \ref{lem:glue2} the reduced ramification data are realizable in all sufficiently large degrees. 
Similarly,  the other reductions via Lemma \ref{lemma:rattranslates} are given in the above tables are obtained by composing families of covering $f$ realized by dessins containing disks of arbitrary radius,  with the genus-$0$ covering $g$ given either by $x\mapsto x^2$ or $x\mapsto x^3$. 
%
\end{proof} 



\subsection{Almost-regular families with bounded ramification indices}
The next lemma shows realizations by almost-regular dessins can be combined.  It is an ingredient in the proof of Theorem \ref{123exist}, which is an existence argument for almost-regular genus $1$ types whose ramification indices are all bounded from above by $3$.
\begin{lemma}\label{lem:comb}
Suppose  the families of ramification data $[A_j,e_j^*]$, $[B_j,e_j^*]$, $j=1,\ldots,r$, are realizable by almost-regular dessins containing disks of arbitrarily large radius. Then the family $[A_j,B_j,e_j^*]$, $j=1,\ldots,r$ is realizable by almost-regular dessins containing disks of arbitrary large radius. 
\end{lemma}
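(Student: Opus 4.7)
The plan is to construct the combined dessin via a cut-and-glue surgery that realizes both sets of irregularities inside a single torus with a regular-tiled background. Fix $i$, and let $C_i$ realize $[A_j, e_j^*]$ on a torus $T_C$ of degree $n_C$ and $D_i$ realize $[B_j, e_j^*]$ on a torus $T_D$ of degree $n_D$. By hypothesis both are obtained as a bounded number of local changes to regular tilings of the torus of the same regular type $[e_1^*,\ldots,e_r^*]$ and contain disks of arbitrarily large radius; in particular, both arise as quotients of a single planar regular tiling $\mathcal T$ (hexagonal or square) by lattices $\Lambda_C, \Lambda_D$, with the modifications confined to bounded regions $R_C, R_D$ whose diameter is independent of $i$.

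The first step is to arrange $\Lambda_C$ and $\Lambda_D$ to share a primitive vector, using the flexibility from Lemma \ref{notstrip}. I would fix a short primitive vector $v_1$ in the translation lattice of $\mathcal T$ and choose bases $\Lambda_C=\langle v_1,v_2^C\rangle$ and $\Lambda_D=\langle v_1,v_2^D\rangle$ whose fundamental parallelograms are both \emph{fat}, i.e.\ of bounded aspect ratio, so that each contains a disk of radius tending to infinity with $n_C, n_D$. Define $\Lambda:=\langle v_1,v_2^C+v_2^D\rangle$; it has index $n_C+n_D$ in the translation group of $\mathcal T$, and its fundamental parallelogram splits naturally as the union of translates of the fundamental parallelograms of $\Lambda_C$ and $\Lambda_D$.

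The combined dessin is then $\mathbb R^2/\Lambda$ equipped with the regular tiling inherited from $\mathcal T$, modified by placing the local changes $R_C$ inside the first sub-parallelogram and $R_D$ inside the second, each surrounded by a buffer of regular tiles. Equivalently, one may view this as cutting $T_C$ and $T_D$ along the meridian circles corresponding to $v_1$ to obtain two cylinders, gluing them along one pair of matching boundary circles, and closing the result up along the remaining pair; an Euler characteristic count ($\chi=0+0-2\chi(S^1)=0$) confirms the resulting surface is again a torus. By construction the degree is $n_C+n_D$ and the ramification multisets are $[A_j,B_j,e_j^*]$; the resulting dessin is obtained by a bounded number of local changes from the regular tiling of $\mathbb R^2/\Lambda$, and the latter contains disks of arbitrary large radius as $i$ grows.

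The main obstacle is the first step: ensuring $\Lambda_C$ and $\Lambda_D$ can be simultaneously chosen to share the primitive vector $v_1$ while each retaining a fat fundamental parallelogram (and hence a large regular disk surrounding its modification region). This is a mild refinement of Lemma \ref{notstrip}: fixing one column of the lattice's transition matrix to be $v_1$, one still has enough freedom in the second column to realize any prescribed sufficiently large determinant while keeping the parallelogram fat, by the same case analysis ($n=a^2$ versus $n=a^2\pm k$ with $k\le a$) as in that lemma. A further small subtlety, addressed by translating inside each fundamental domain, is that the modification regions $R_C, R_D$ must be positioned away from the boundary circle along $v_1$ so that the surgery cuts only through regular tiles.
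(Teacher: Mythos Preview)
Your cut-and-glue surgery is far more elaborate than what the lemma requires, and it contains a genuine gap in exactly the place you flag as the main obstacle. If $v_1$ is a \emph{fixed short} primitive vector of the planar tiling (say of length $L$), then any lattice $\langle v_1,w\rangle$ of index $n$ has fundamental parallelogram of area $n$ but one side of length $L$; its aspect ratio is therefore at least $n/L^2$, which tends to infinity. Such a parallelogram cannot be fat, and cannot contain a disk of radius larger than $L/2$. So your claim that ``fixing one column \dots one still has enough freedom in the second column to realize any prescribed sufficiently large determinant while keeping the parallelogram fat'' is simply false, and the case analysis of Lemma~\ref{notstrip} does not apply once one column is frozen. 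Consequently the combined lattice $\Lambda=\langle v_1,v_2^C+v_2^D\rangle$ is a thin strip, and the conclusion that it ``contains disks of arbitrary large radius as $i$ grows'' fails.

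The paper's argument avoids all of this. Rather than gluing the two given tori, it works directly from the hypothesis: each family is obtained by at most some fixed number $c$ of changes to a regular tiling, and those changes fit inside a disk of radius $r(c)$. Now take a \emph{single} regular torus tiling of sufficiently large degree (via Lemma~\ref{notstrip}) so that its fundamental domain contains two disjoint disks of radius $r(c)$, and perform the $A$-changes in one disk and the $B$-changes in the other. The result has ramification type $[A_j,B_j,e_j^*]$, is almost-regular, and the background regular tiling still contains disks of arbitrary radius. This is precisely the mechanism of Proposition~\ref{prop:localch1}, applied twice in the same fundamental domain; no lattice-matching or torus surgery is needed.
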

\begin{proof}
The same proof as in Proposition \ref{prop:localch1} applies, only here in sufficiently large degrees, both the changes for $T_1$ and $T_2$ can be applied simultaneously in a non-overlapping manner to a regular torus tiling, which then results in the type $T$.
\end{proof}

\begin{proof}[Proof of Theorem \ref{123exist}:]
Below, we show existence in {\it infinitely many} degrees. Since all families of dessins of type [2,2,2,2] constructed   above, either by drawing or by reduction arguments,  contain disks of arbitrary radius, Proposition \ref{prop:localch1} implies the existence of almost-regular dessins in {\it all} sufficiently large degrees. 

Let $T:=[1^{k_1},3^{m_1},2^*][1^{k_2},3^{m_2},2^*][1^{k_3},3^{m_3},2^*][1^{k_4},3^{m_4},2^*]$ be a family of almost-regular ramification data . Since such families with error at most $10$ are realizable by Theorem \ref{thm:existence}, we can assume that $T$ is almost-regular with error $>10$. 
The idea of the following proof is to ``split" the type $T$ into two halves, in the following way. Set
$$T_1:=[1^{a_1},3^{b_1},2^*][1^{a_2},3^{b_2},2^*][1^{a_3},3^{b_3},2^*][1^{a_4},3^{b_4},2^*],$$ 
$$T_2:=[1^{c_1},3^{d_1},2^*][1^{c_2},3^{d_2},2^*][1^{c_3},3^{d_3},2^*][1^{c_4},3^{d_4},2^*],$$ with integers $a_i,b_i,c_i,d_i$ such that 
$a_i+c_i=k_i$ and $b_i+d_i=m_i$ for each $i=1,\ldots,4$ and such that $\sum_{i=1}^4 a_i = \sum_{i=1}^4 b_i$. The last equality ensures that $T_1$ and $T_2$ are again of genus $1$. 

 If we can now show that the families $T_1$ and $T_2$ are realizable by almost-regular dessins containing disks of arbitrary large radius, then by Lemma \ref{lem:comb} the assertion follows for the type $T$. 

Now if we can choose the two types $T_1$ and $T_2$ such that: 
\begin{equation}\label{equ:dec} 
\text{$T_1$ and $T_2$ are different from  $[2^*]^4$ and $[1,3,2^*][2^*]^3$},
\end{equation}
then the assertion follows by induction on the error. 

If $T$ is of even degree, so that $k_i+m_i$ is even for all $i$, then up to permuting the partitions of $T$, we can assume one of the following:
\begin{itemize}
\item[(a)] $k_1,m_1\ge 2$.
\item[(b)] $k_1\le 1$ and $m_1\ge 2$.
\item[(c)] $m_i\le 1$ for all $i=1,...,4$.
\end{itemize}

We treat these cases separately.
In Case (a), define $T_1$ and $T_2$ via $a_1=b_1=2$, $a_i=b_i=0$ for all $i\in \{2,3,4\}$, thus forcing $c_1=k_1-2$, $d_1=m_1-2$, $c_i=k_i$ and $d_i=m_i$ for all $i\in \{2,3,4\}$. Assuming the error is $>10$, this leads to data $T_1$, $T_2$ as desired in \eqref{equ:dec} unless or $T$ is one of 1) $[1^3,3^3,2^*][2^*]^3$ or 2) $[1^2,3^2,2^*][1,3,2^*][2^*]^2$; in those two cases, $T_2$ would become a non-existent type.

In Case (b), because of even degree and genus $1$, we have $k_i\ge 2$ for some $i\in \{2,3,4\}$, without loss for $i=2$. Define $T_1$ and $T_2$ by $b_1=2=a_2$ and $a_i,b_j=0$ otherwise. This works unless $T$ was one of 3) $[1,3^3,2^*][1^2,2^*][2^*]^2$, 4) $[3^2,2^*][1^3,3,2^*][2^*]^2$, or 5) $[3^2,2^*][1^2,2^*][1,3,2^*][2^*]$. 

In Case (c), note that the condition $\sum_{i=1}^4 k_i = \sum_{i=1}^4 m_i$ together with even degree leave only two types with error $>10$, namely $[1,3,2^*]^3[2^*]$ and $[1,3,2^*]^4$. The latter one can be split into $[1,3,2^*]^2[2^*]^2$ and $[2^*]^2[1,3,2^*]^2$. The type 6) $[1,3,2^*]^3[2^*]$ cannot be split, and is therefore another exception.

\vspace{2mm}

Next, assume $T$ is of odd degree, so $k_i+m_i$ is odd for all $i$ and in particular, every partition in $T$ must contain at least one odd entry. Then, up to permuting the partitions in $T$, one of the following holds:
\begin{itemize}
\item[(d)] $k_1,k_2\ge 1$ and $m_3,m_4\ge 1$.
\item[(e)] $m_2=m_3=m_4=0$.
\item[(f)] $k_2=k_3=k_4=0$.
\end{itemize}

In Case (d), set $a_1=a_2=b_3=b_4=1$ and $a_i,b_j=0$ otherwise. This works except when $T$ is one of 7) $[1^2,3,2^*][1,2^*][3,2^*]^2$ and 8) $[1,2^*]^2[3,2^*][1,3^2,2^*]$.

In Case (e), note that $k_2,k_3,k_4\ge 1$, and hence $m_1\ge 3$. Set $a_2=a_3=a_4=1$, $b_1=3$ and $a_i,b_j=0$ otherwise. This succeeds unless $T$ is one of 9) $[1,2^*]^3[3^3,2^*]$ or 10) $[1,2^*]^3[1,3^4,2^*]$. 

Finally, Case (f) can be treated just like Case (e), with the roles of entries equal to $1$ and equal to $3$ exchanged. The exceptional cases here are then exactly 11) $[3,2^*]^3[1^3,2^*]$ and 12) $[3,2^*]^3[1^4,3,2^*]$.

\vspace{2mm}

It therefore suffices to deal with the total of $12$ above ``exceptional" types. 
Firstly, the six types 5) $[3^2,2^*][1^2,2^*][1,3,2^*][2^*]$, 6) $[1,3,2^*]^3[2^*]$, 9) $[1,2^*]^3[3^3,2^*]$, 10) $[1,2^*]^3[1,3^4,2^*]$, 11) $[3,2^*]^3[1^3,2^*]$, 12) $[3,2^*]^3[1^4,3,2^*]$,   are drawn in at the end of the proof. 
The following types (on the right) arise via appropriate composition of genus zero and genus one types by Lemma \ref{lemma:rattranslates}:
\begin{itemize}
\item[5)$\to$ 3):] $[3^2,2^*][1^2,2^*][1,3,2^*][2^*] \to [1,3^3,2^*][1^2,2^*][2^*]^2$;
\item[3)$\to$ 1):] $[1,3^3,2^*][1^2,2^*][2^*]^2 \to [1^3,3^3,2^*][2^*]^3$;
\item[6)$\to$ 2):] $[1,3,2^*]^3[2^*]\to [1^2,3^2,2^*][1,3,2^*][2^*]^2$;
\item[11)$\to$ 4):] $[3,2^*]^3[1^3,2^*]\to [3^2,2^*][1^3,3,2^*][2^*]^2$.
\end{itemize}
In the first reduction 5)$\to$3) we compose a genus-$1$ covering $f$ with the ramification type on the left with a degree-$2$ genus-$0$ covering $g$ of ramification type $[2][2]$, with the following conditions:\\
The two points of ramification structure $[1^2,2^*]$ and $[2^*]$ for $f$ lie in a common fiber of $g$; same for the two points of ramification structure $[3^2,2^*]$ and $[1,3,2^*]$; the two ramified points of $g$ do not ramify further under $f$.\\
For 3)$\to$1), we compose the genus zero ramification type $[2][2]$ in the obvious way with the genus one type on the left. A similar composition works in the other two reductions. \\
%
\vspace{2mm}
 Note that for these four reductions, we make use of Proposition \ref{prop:localch1} and Lemma \ref{lem:glue2}. To apply these, it suffices to note that the genus-$1$ families were realized by almost-regular dessins  containing disks of arbitrary radius.

Types 7)-8) arise from the types on the left via Lemma \ref{lemma:addlines} with $k=1$:
\begin{itemize}
\item[] $[{\textcolor{red}1},1,2^*][{\textcolor{red}2},2^*][2^*][3^2,2^*] \to [1,2^*]^2[3,2^*][1,3^2,2^*]$;
\item[] $[1,3,2^*][2^*][{\textcolor{red}2},2^*][{\textcolor{red}1},3,2^*] \to [1^2,3,2^*][1,2^*][3,2^*]^2$.
\end{itemize}

The dessins that follow complete the proof for the 12 exceptional types and hence the theorem. 
\end{proof}

\begin{figure}[h!] 
\flushleft
\begin{subfigure}[b]{0.1\textwidth}
\begin{tikzpicture}[scale=.3]
\draw(2,0)--(4,2);
\draw(4,2)--(3,3);
\draw(3,5)--(4,6);
\draw(4,6)--(2,8);
\draw(0,8)--(-2,6);
\draw(-2,6)--(-4,6);
\draw(-4,6)--(-6,4);
\draw(-6,4)--(-4,2);
\draw(-4,2)--(-3,2);
\draw(-2,6)--(0,4);
\draw(-1,3)--(0,4);
\draw(4,6)--(6,6);
\draw(6,6)--(8,4);
\draw(8,4)--(6,2);
\draw(6,2)--(4,2);
\draw(6,6)--(8,8);
\draw(8,8)--(6,10);
\draw(6,10)--(4,10);
\draw(4,10)--(2,8);
\draw(4,10)--(2,12);
\draw(2,12)--(0,12);
\draw(0,12)--(-2,10);
\draw(-2,10)--(-4,10);
\draw(-2,10)--(0,8);
\draw(-4,10)--(-6,8);
\draw(-6,8)--(-4,6);
\draw(6,2)--(8,0);
\draw(8,0)--(6,-2);
\draw(6,-2)--(4,-2);
\draw(4,-2)--(2,-4);
\draw(2,-4)--(0,-4);
\draw(0,-4)--(-2,-2);
\draw(-2,-2)--(0,0);
\draw(-2,-2)--(-4,-2);
\draw(-4,-2)--(-6,0);
\draw(-6,0)--(-4,2);
\draw(2,0)--(4,-2);
\draw(2,0)--(0,0);
\draw(0,8)--(2,8);
\draw(3,1)--(-1,3);
\draw(-5,3)to[bend left](0,0);
\draw(0,4)to[bend right](5,6);
\fill[blue](3,7) circle (4pt);
\fill[blue](3,3) circle (4pt);
\fill[blue](-5,3) circle (4pt);
\fill[blue](-1,5) circle (4pt);
\fill[blue](7,5) circle (4pt);
\fill[blue](-5,7) circle (4pt);
\fill[blue](-1,9) circle (4pt);
\fill[blue](3,11) circle (4pt);
\fill[blue](7,9) circle (4pt);
\fill[blue](7,1) circle (4pt);
\fill[blue](3,-1) circle (4pt);
\fill[blue](-1,-3) circle (4pt);
\fill[blue](-5,-1) circle (4pt);
\fill[red](-3,2) circle (4pt);
\fill[red](-3,6) circle (4pt);
\fill[red](1,8) circle (4pt);
\fill[red](1,0) circle (4pt);
\fill[red](5,6) circle (4pt);
\fill[red](5,2) circle (4pt);
\fill[red](-3,10) circle (4pt);
\fill[red](1,12) circle (4pt);
\fill[red](5,10) circle (4pt);
\fill[red](5,-2) circle (4pt);
\fill[red](1,-4) circle (4pt);
\fill[red](-3,-2) circle (4pt);
\fill[green](-1,7) circle (4pt);
\fill[green](3,5) circle (4pt);
\fill[green](3,1) circle (4pt);
\fill[green](7,3) circle (4pt);
\fill[green](-5,9) circle (4pt);
\fill[green](-1,11) circle (4pt);
\fill[green](3,9) circle (4pt);
\fill[green](7,7) circle (4pt);
\fill[green](-5,5) circle (4pt);
\fill[green](7,-1) circle (4pt);
\fill[green](3,-3) circle (4pt);
\fill[green](-1,-1) circle (4pt);
\fill[green](-5,1) circle (4pt);
\end{tikzpicture} 
\caption*{type 6: $[1,3, 2^*]^{3} [2^*]$}
\end{subfigure} 
\hspace{0.3\textwidth}
\begin{subfigure}[b]{0.1\textwidth}
\begin{tikzpicture}[scale=.3]
\draw(2,0)--(4,2);
\draw(3,5)--(4,6);
\draw(4,6)--(2,8);
\draw(0,8)--(-2,6);
\draw(-2,6)--(-4,6);
\draw(-4,6)--(-6,4);
\draw(-6,4)--(-4,2);
\draw(-2,6)--(0,4);
\draw(-0.5,3.5)--(0,4);
\draw(4,6)--(6,6);
\draw(6,6)--(8,4);
\draw(8,4)--(6,2);
\draw(6,2)--(4,2);
\draw(6,6)--(8,8);
\draw(8,8)--(6,10);
\draw(6,10)--(4,10);
\draw(4,10)--(2,8);
\draw(4,10)--(2,12);
\draw(2,12)--(0,12);
\draw(0,12)--(-2,10);
\draw(-2,10)--(-4,10);
\draw(-2,10)--(0,8);
\draw(-4,10)--(-6,8);
\draw(-6,8)--(-4,6);
\draw(6,2)--(8,0);
\draw(8,0)--(6,-2);
\draw(6,-2)--(4,-2);
\draw(4,-2)--(2,-4);
\draw(2,-4)--(0,-4);
\draw(0,-4)--(-2,-2);
\draw(-2,-2)--(0,0);
\draw(-2,-2)--(-4,-2);
\draw(-4,-2)--(-6,0);
\draw(-6,0)--(-4,2);
\draw(2,0)--(4,-2);
\draw(2,0)--(0,0);
\draw(-1,1)--(0,0);
\draw(0,8)--(2,8);
\draw(-4,2)to[bend left](1,0);
\draw(4,2)to[out=-170,in=-140](-1,5);
\draw(0,4)to[bend right](5,6);
\fill[blue](3,7) circle (4pt);
\fill[blue](-5,3) circle (4pt);
\fill[blue](-1,5) circle (4pt);
\fill[blue](7,5) circle (4pt);
\fill[blue](-5,7) circle (4pt);
\fill[blue](-1,9) circle (4pt);
\fill[blue](3,11) circle (4pt);
\fill[blue](7,9) circle (4pt);
\fill[blue](7,1) circle (4pt);
\fill[blue](3,-1) circle (4pt);
\fill[blue](-1,-3) circle (4pt);
\fill[blue](-5,-1) circle (4pt);
\fill[blue](-1,1) circle (4pt);
\fill[red](-3,6) circle (4pt);
\fill[red](1,8) circle (4pt);
\fill[red](1,0) circle (4pt);
\fill[red](5,6) circle (4pt);
\fill[red](5,2) circle (4pt);
\fill[red](-3,10) circle (4pt);
\fill[red](1,12) circle (4pt);
\fill[red](5,10) circle (4pt);
\fill[red](5,-2) circle (4pt);
\fill[red](1,-4) circle (4pt);
\fill[red](-3,-2) circle (4pt);
\fill[green](-0.5,3.5) circle (4pt);
\fill[green](-1,7) circle (4pt);
\fill[green](3,5) circle (4pt);
\fill[green](3,1) circle (4pt);
\fill[green](7,3) circle (4pt);
\fill[green](-5,9) circle (4pt);
\fill[green](-1,11) circle (4pt);
\fill[green](3,9) circle (4pt);
\fill[green](7,7) circle (4pt);
\fill[green](-5,5) circle (4pt);
\fill[green](7,-1) circle (4pt);
\fill[green](3,-3) circle (4pt);
\fill[green](-1,-1) circle (4pt);
\fill[green](-5,1) circle (4pt);
\end{tikzpicture} 
\caption*{type 5: $[3^{2}, 2^*][1^{2}, 2^*][1,3, 2^*] [2^*]$}
\end{subfigure} 

\end{figure}

\begin{figure}[h!] 
\flushleft
\begin{subfigure}[b]{0.1\textwidth}
\begin{tikzpicture}[scale=.3]
\draw(2,0) -- (0,0);
\draw(2,0)--(4,2);
\draw(4,2)--(2,4);
\draw(2,4)--(4,6);
\draw(4,6)--(2,8);
\draw(0,8)--(-2,6);
\draw(-2,6)--(-4,6);
\draw(-4,6)--(-6,4);
\draw(-6,4)--(-4,2);
\draw(-4,2)--(-2,2);
\draw(-2,2)--(0,0);
\draw(-2,6)--(0,4);
\draw(-2,2)--(0,4);
\draw(4,6)--(6,6);
\draw(6,6)--(8,4);
\draw(8,4)--(6,2);
\draw(6,2)--(4,2);
\draw(6,6)--(8,8);
\draw(8,8)--(6,10);
\draw(6,10)--(4,10);
\draw(4,10)--(2,8);
\draw(4,10)--(2,12);
\draw(2,12)--(0,12);
\draw(0,12)--(-2,10);
\draw(-2,10)--(-4,10);
\draw(-2,10)--(0,8);
\draw(-4,10)--(-6,8);
\draw(-6,8)--(-4,6);
\draw(2,8)--(0,8);
\draw(2,4)--(0,4);
\draw(1,6)--(1,8);
\draw(1,6)--(3,5);
\draw(1,6)--(-1,5);
\fill[blue](-1,1) circle (4pt);
\fill[blue](3,7) circle (4pt);
\fill[blue](3,3) circle (4pt);
\fill[blue](-5,3) circle (4pt);
\fill[blue](-1,5) circle (4pt);
\fill[blue](7,5) circle (4pt);
\fill[blue](-5,7) circle (4pt);
\fill[blue](-1,9) circle (4pt);
\fill[blue](3,11) circle (4pt);
\fill[blue](7,9) circle (4pt);
\fill[red](-3,2) circle (4pt);
\fill[red](-3,6) circle (4pt);
\fill[red](1,0) circle (4pt);
\fill[red](5,6) circle (4pt);
\fill[red](5,2) circle (4pt);
\fill[red](-3,10) circle (4pt);
\fill[red](1,12) circle (4pt);
\fill[red](1,8) circle (4pt);
\fill[red](1,4) circle (4pt);
\fill[red](5,10) circle (4pt);
\fill[green](-1,3) circle (4pt);
\fill[green](-1,7) circle (4pt);
\fill[green](3,5) circle (4pt);
\fill[green](3,1) circle (4pt);
\fill[green](7,3) circle (4pt);
\fill[green](-5,9) circle (4pt);
\fill[green](-1,11) circle (4pt);
\fill[green](3,9) circle (4pt);
\fill[green](7,7) circle (4pt);
\fill[green](-5,5) circle (4pt);
\end{tikzpicture} 
\caption*{type 11: $[3, 2^*]^{3}[1^{3} ,2^*]$}
\end{subfigure} 
\hspace{0.3\textwidth}
\begin{subfigure}[b]{0.1\textwidth}
\begin{tikzpicture}[scale=.3]
\draw(2,0)--(4,2);
\draw(3,3)--(2,4);
\draw(2,4)--(4,6);
\draw(-2,6)--(-4,6);
\draw(-4,6)--(-6,4);
\draw(-6,4)--(-4,2);
\draw(-4,2)--(-2,2);
\draw(-2,2)--(4,2);
\draw(-2,6)--(0,4);
\draw(-2,2)--(0,4);
\draw(4,6)--(6,6);
\draw(6,6)--(8,4);
\draw(8,4)--(6,2);
\draw(6,2)--(4,2);
\draw(6,2)--(8,0);
\draw(8,0)--(6,-2);
\draw(6,-2)--(4,-2);
\draw(4,-2)--(2,-4);
\draw(0,-4)--(-2,-2);
\draw(-2,-2)--(-4,-2);
\draw(-4,-2)--(-6,0);
\draw(-6,0)--(-6.5,1);
\draw(2,0)--(4,-2);
\draw(-6,-4)--(-4,-2);
\draw(6,-2)--(8,-4);
\draw(8,-4)--(6,-6);
\draw(6,-6)--(4,-6);
\draw(4,-6)--(2,-4);
\draw(4,-6)--(2,-8);
\draw(0,-8)--(-2,-6);
\draw(-2,-6)--(0,-4);
\draw(-2,-6)--(-4,-6);
\draw(-4,-6)--(-6,-4);
\draw(1,-5)--(2,-4);
\draw(-2,-2)--(-4,2);
\draw(-6,-8)--(-4,-6);
\draw(0,4)--(2,4);
\draw(2,-8)--(0,-8);
\draw(-8,4)--(-6,4);
\draw(-8,4)--(-10,2);
\draw(-10,2)--(-8,0);
\draw(-8,0)--(-6,0);
\draw(-8,0)--(-10,-2);
\draw(-10,-2)--(-8,-4);
\draw(-8,-4)--(-6,-4);
\draw(-8,-4)--(-10,-6);
\draw(-10,-6)--(-8,-8);
\draw(-8,-8)--(-6,-8);
\draw(0,-4)--(2,0);
\fill[blue](3,3) circle (4pt);
\fill[blue](-5,3) circle (4pt);
\fill[blue](-1,5) circle (4pt);
\fill[blue](7,5) circle (4pt);
\fill[blue](-5,-5) circle (4pt);
\fill[blue](7,-3) circle (4pt);
\fill[blue](3,-5) circle (4pt);
\fill[blue](-1,-7) circle (4pt);
\fill[blue](7,1) circle (4pt);
\fill[blue](3,-1) circle (4pt);
\fill[blue](-9,1) circle (4pt);
\fill[blue](-9,-3) circle (4pt);
\fill[blue](-9,-7) circle (4pt);
\fill[blue](-1,-3) circle (4pt);
\fill[blue](1,2) circle (4pt);
\fill[blue](-5,-1) circle (4pt);
\fill[red](-3,2) circle (4pt);
\fill[red](-3,6) circle (4pt);
\fill[red](5,6) circle (4pt);
\fill[red](5,2) circle (4pt);
\fill[red](1,4) circle (4pt);
\fill[red](1,-5) circle (4pt);
\fill[red](1,-2) circle (4pt);
\fill[red](5,-2) circle (4pt);
\fill[red](-3,-2) circle (4pt);
\fill[red](5,-6) circle (4pt);
\fill[red](-3,-6) circle (4pt);
\fill[red](-7,4) circle (4pt);
\fill[red](-7,0) circle (4pt);
\fill[red](-7,-8) circle (4pt);
\fill[red](1,-8) circle (4pt);
\fill[red](-7,-4) circle (4pt);
\fill[green](-1,3) circle (4pt);
\fill[green](3,5) circle (4pt);
\fill[green](3,1) circle (4pt);
\fill[green](7,3) circle (4pt);
\fill[green](-5,5) circle (4pt);
\fill[green](7,-1) circle (4pt);
\fill[green](3,-3) circle (4pt);
\fill[green](-3,0) circle (4pt);
\fill[green](-6.5,1) circle (4pt);
\fill[green](7,-5) circle (4pt);
\fill[green](3,-7) circle (4pt);
\fill[green](-1,-5) circle (4pt);
\fill[green](-5,-3) circle (4pt);
\fill[green](-5,-7) circle (4pt);
\fill[green](-9,3) circle (4pt);
\fill[green](-9,-1) circle (4pt);
\fill[green](-9,-5) circle (4pt);
\end{tikzpicture} 
\caption*{type 9: $[1,2^*]^{3}[3^{3},2^*]$}
\end{subfigure} 

\end{figure}

\begin{figure}[h!] 
\flushleft
\begin{subfigure}[b]{0.1\textwidth}
\begin{tikzpicture}[scale=.3]
\draw(4,2)--(2,4);
\draw(-4,2)--(-2,2);
\draw(-2,2)--(0,0);
\draw(-2,2)--(0,4);
\draw(6,2)--(4,2);
\draw(6,2)--(8,0);
\draw(8,0)--(6,-2);
\draw(6,-2)--(4,-2);
\draw(4,-2)--(2,-4);
\draw(0,-4)--(-2,-2);
\draw(-2,-2)--(0,0);
\draw(-2,-2)--(-4,-2);
\draw(-4,-2)--(-6,0);
\draw(-6,0)--(-4,2);
\draw(-6,-4)--(-4,-2);
\draw(6,-2)--(8,-4);
\draw(8,-4)--(6,-6);
\draw(6,-6)--(4,-6);
\draw(4,-6)--(2,-4);
\draw(4,-6)--(2,-8);
\draw(0,-8)--(-2,-6);
\draw(-2,-6)--(0,-4);
\draw(-2,-6)--(-4,-6);
\draw(-4,-6)--(-6,-4);
\draw(1,-8)--(2,-4);
\draw(0,0)--(0,-4);
\draw(8,-8)--(6,-6);
\draw(8,-8)--(6,-10);
\draw(6,-10)--(4,-10);
\draw(4,-10)--(2,-8);
\draw(-2,-10)--(0,-8);
\draw(-2,-10)--(-4,-10);
\draw(-4,-10)--(-6,-8);
\draw(-6,-8)--(-4,-6);
\draw(2,-8)--(0,-8);
\draw(2,-12)--(4,-10);
\draw(2,-12)--(0,-12);
\draw(0,-12)--(-2,-10);
\draw(2,4)--(0,4);
\draw(4,-2)--(7,1);
\draw(-1,3)--(4,2);
\fill[blue](-1,1) circle (4pt);
\fill[blue](3,3) circle (4pt);
\fill[blue](-5,-5) circle (4pt);
\fill[blue](7,-3) circle (4pt);
\fill[blue](3,-5) circle (4pt);
\fill[blue](-1,-7) circle (4pt);
\fill[blue](7,1) circle (4pt);
\fill[blue](-1,-3) circle (4pt);
\fill[blue](-5,-1) circle (4pt);
\fill[blue](7,-7) circle (4pt);
\fill[blue](3,-9) circle (4pt);
\fill[blue](-5,-9) circle (4pt);
\fill[blue](-1,-11) circle (4pt);
\fill[red](-3,2) circle (4pt);
\fill[red](0,-2) circle (4pt);
\fill[red](5,2) circle (4pt);
\fill[red](1,4) circle (4pt);
\fill[red](5,-2) circle (4pt);
\fill[red](-3,-2) circle (4pt);
\fill[red](5,-6) circle (4pt);
\fill[red](-3,-6) circle (4pt);
\fill[red](5,-10) circle (4pt);
\fill[red](-3,-10) circle (4pt);
\fill[red](1,-8) circle (4pt);
\fill[red](1,-12) circle (4pt);
\fill[green](-1,3) circle (4pt);
\fill[green](7,-1) circle (4pt);
\fill[green](3,-3) circle (4pt);
\fill[green](-1,-1) circle (4pt);
\fill[green](-5,1) circle (4pt);
\fill[green](7,-5) circle (4pt);
\fill[green](3,-7) circle (4pt);
\fill[green](-1,-5) circle (4pt);
\fill[green](-5,-3) circle (4pt);
\fill[green](7,-9) circle (4pt);
\fill[green](-1,-9) circle (4pt);
\fill[green](-5,-7) circle (4pt);
\fill[green](3,-11) circle (4pt);
\end{tikzpicture} 
\caption*{type 12: $[3 ,2^*]^{3}[1^{4},3,2^*]$}
\end{subfigure} 
\hspace{0.3\textwidth}
\begin{subfigure}[b]{0.1\textwidth}
\begin{tikzpicture}[scale=.3]
\draw(4,2)--(2,4);
\draw(2,4)--(4,6);
\draw(4,6)--(2,8);
\draw(0,8)--(-2,6);
\draw(-2,6)--(-4,6);
\draw(-4,6)--(-6,4);
\draw(-6,4)--(-4,2);
\draw(-4,2)--(-2,2);
\draw(-2,2)--(0,0);
\draw(-2,6)--(0,4);
\draw(-2,2)--(0,4);
\draw(4,6)--(6,6);
\draw(6,6)--(8,4);
\draw(8,4)--(6,2);
\draw(6,2)--(4,2);
\draw(6,2)--(8,0);
\draw(8,0)--(6,-2);
\draw(6,-2)--(4,-2);
\draw(4,-2)--(2,-4);
\draw(-2,-2)--(0,0);
\draw(-2,-2)--(-4,-2);
\draw(-4,-2)--(-6,0);
\draw(-6,0)--(-4,2);
\draw(-6,-4)--(-4,-2);
\draw(6,-2)--(8,-4);
\draw(8,-4)--(6,-6);
\draw(6,-6)--(4,-6);
\draw(4,-6)--(2,-4);
\draw(4,-8)--(2,-8);
\draw(0,-8)--(-2,-6);
\draw(-2,-2)--(-4,-6);
\draw(-2,-6)--(-4,-6);
\draw(-5,-5)--(-6,-4);
\draw(8,-8)--(6,-6);
\draw(8,-8)--(6,-10);
\draw(6,-10)--(4,-10);
\draw(4,-10)--(2,-8);
\draw(4,2)--(2,0);
\draw(4,-2)--(2,0);
\draw(-2,-10)--(0,-8);
\draw(-2,-10)--(-4,-10);
\draw(-4,-10)--(-6,-8);
\draw(-6,-8)--(-4,-6);
\draw(2,-8)--(0,-8);
\draw(-8,4)--(-6,4);
\draw(-8,4)--(-10,2);
\draw(-10,2)--(-8,0);
\draw(-8,0)--(-6,0);
\draw(-8,0)--(-10,-2);
\draw(-10,-2)--(-8,-4);
\draw(-8,-4)--(-6,-4);
\draw(-8,-4)--(-10,-6);
\draw(-10,-6)--(-8,-8);
\draw(-8,-8)--(-6,-8);
\draw(2,-12)--(4,-10);
\draw(2,-12)--(0,-12);
\draw(0,-12)--(-2,-10);
\draw(-6,-12)--(-4,-10);
\draw(-6,-12)--(-8,-12);
\draw(-8,-12)--(-10,-10);
\draw(-10,-10)--(-8,-8);
\draw(2,8)--(0,8);
\draw(1,5)--(0,4);
\draw(2,4)--(0,0);
\draw(2,0)--(2,-4);
\draw(-2,-6)--(4,-6);
\fill[blue](-1,1) circle (4pt);
\fill[blue](3,7) circle (4pt);
\fill[blue](3,3) circle (4pt);
\fill[blue](-5,3) circle (4pt);
\fill[blue](-1,5) circle (4pt);
\fill[blue](7,5) circle (4pt);
\fill[blue](-5,-5) circle (4pt);
\fill[blue](7,-3) circle (4pt);
\fill[blue](3,-5) circle (4pt);
\fill[blue](7,1) circle (4pt);
\fill[blue](-9,1) circle (4pt);
\fill[blue](-9,-3) circle (4pt);
\fill[blue](-9,-7) circle (4pt);
\fill[blue](-5,-1) circle (4pt);
\fill[blue](7,-7) circle (4pt);
\fill[blue](3,-9) circle (4pt);
\fill[blue](-5,-9) circle (4pt);
\fill[blue](-1,-11) circle (4pt);
\fill[blue](-9,-11) circle (4pt);
\fill[blue](3,-1) circle (4pt);
\fill[blue](-3,-4) circle (4pt);
\fill[red](-3,2) circle (4pt);
\fill[red](-3,6) circle (4pt);
\fill[red](5,6) circle (4pt);
\fill[red](5,2) circle (4pt);
\fill[red](1,8) circle (4pt);
\fill[red](1,5) circle (4pt);
\fill[red](5,-2) circle (4pt);
\fill[red](-3,-2) circle (4pt);
\fill[red](2,-2) circle (4pt);
\fill[red](5,-6) circle (4pt);
\fill[red](5,-10) circle (4pt);
\fill[red](-3,-10) circle (4pt);
\fill[red](-7,4) circle (4pt);
\fill[red](-7,0) circle (4pt);
\fill[red](-7,-8) circle (4pt);
\fill[red](1,-8) circle (4pt);
\fill[red](-7,-4) circle (4pt);
\fill[red](-7,-12) circle (4pt);
\fill[red](1,-12) circle (4pt);
\fill[red](1,2) circle (4pt);
\fill[red](-3,-6) circle (4pt);
\fill[green](-1,3) circle (4pt);
\fill[green](3,1) circle (4pt);
\fill[green](-1,7) circle (4pt);
\fill[green](3,5) circle (4pt);
\fill[green](7,3) circle (4pt);
\fill[green](-5,5) circle (4pt);
\fill[green](7,-1) circle (4pt);
\fill[green](3,-3) circle (4pt);
\fill[green](1,-6) circle (4pt);
\fill[green](-1,-1) circle (4pt);
\fill[green](-5,1) circle (4pt);
\fill[green](7,-5) circle (4pt);
\fill[green](4,-8) circle (4pt);
\fill[green](-5,-3) circle (4pt);
\fill[green](7,-9) circle (4pt);
\fill[green](-1,-9) circle (4pt);
\fill[green](-5,-7) circle (4pt);
\fill[green](-9,3) circle (4pt);
\fill[green](-9,-1) circle (4pt);
\fill[green](-9,-5) circle (4pt);
\fill[green](-9,-9) circle (4pt);
\fill[green](3,-11) circle (4pt);
\fill[green](-5,-11) circle (4pt);
\end{tikzpicture} 
\caption*{type 10: $[1, 2^*]^{3}[1,3^{4},2^*]$}
\end{subfigure}

\end{figure} 


\section{Realizability of almost-regular ramification data on a sphere}
\label{sect:sphere}
In this section we prove Theorem \ref{thm:localch0}, but first we note several differences from the genus $1$ case. 
Firstly, families of almost-regular ramification data of genus zero are in general not realizable in all degrees, see Section \ref{sec:intro}. 
In particular, it should be noted that the argument of Proposition \ref{prop:localch1}, ensuring existence in all sufficiently large degrees, does not apply here. 
Secondly, the notion of regular ramification types is not of use here, since, as a consequence of the Riemann-Hurwitz formula, there are no infinite families of regular ramification data in genus zero. 
We therefore provide a pure existence result (Theorem \ref{thm:localch0}) for the case of genus $0$. See however the appendix, where we develop terminology generalizing the notions of regular dessins and of local changes, and give some evidence for strengthening of Theorem \ref{thm:localch0}.

 
%


\begin{proof}[Proof of Theorem \ref{thm:localch0}]
A straightforward calculation shows that all of the genus 0 almost-regular ramification data satisfying the Riemann-Hurwitz formula with $\varepsilon \le 10$ from the types $[3^*]^3$ or $[2^*]^4$, or with $\varepsilon \le 6$ from $[2^*][3^*][6^*]$ or $ [2^*][4^*][4^*]$  are exactly the ones in the following list: 

\begin{paracol}{2} 
\begin{tabular}{ccc}
(1) & [NR] &$[1, 3^*] [1, 3^*] [1, 3^*]$\\
(2) & [R1]  
&$[ 3^*] [ 3^*] [1,1,1, 3^*]$ \\
(3) & [NR] & $[1, 3^*] [1, 3^*] [2,2, 3^*]$\\
(4) & [NR] & $[1,1, 3^*] [2, 3^*] [2, 3^*]$\\
(5) & [NR]  &$[ 3^*] [ 1,2,3^*] [1,2, 3^*]$\\ 
(6) & [R4]  
&$[ 3^*] [ 3^*] [1,1,2,2, 3^*]$ \\
(7) & [NR]  &$[1, 3^*] [1, 3^*] [1,1,5, 3^*]$\\
(8) & [NR]  &$[1, 3^*] [1, 3^*] [1,2,4, 3^*]$ \\
(9) & [NR]  &$[1, 3^*] [1,1,2, 3^*] [4, 3^*] $\\
(10) & [NR] & $[1, 3^*] [2,2, 3^*] [2,2, 3^*]$ \\
(11) & [NR]  &$[1,2,2, 3^*] [2, 3^*] [2, 3^*]$\\  
(12) & [R1]  
&$[1,1, 3^*] [1,4, 3^*] [2, 3^*]$ \\
(13) & [NR]  &$[1,1, 3^*] [1,1, 3^*] [5, 3^*]$ \\
(14) & [R13] 
&$[ 3^*] [ 3^*] [1^4, 5, 3^*]$ \\
(15) & [R12]  
& $[ 3^*] [ 3^*] [1^3, 2,4, 3^*]$ \\
(16) & [R10]  
& $[ 3^*] [ 3^*] [1,2^4, 3^*]$ \\
(17) & [NR] & $[ 3^*] [1,1,4, 3^*] [1,2, 3^*]$\\ 
(18) & [NR] & $[ 3^*] [ 1,2,3^*] [2^3, 3^*]$ \\
(19) & [NR]  &$[ 3^*] [ 1^3, 3^*] [1,5, 3^*]$ \\
(20) & [NR] & $[ 3^*] [ 1^3,3^*] [2,4, 3^*]$ \\
& & \\
(21) &[NR] &$[1,2^*] [1,3^*] [1,6^*] $\\
(22) &[R2] 
&$[2^*] [1,1,1,3^*] [6^*] $\\
(23) &[R47] 
&$[1^4, 2^*] [3^*] [6^*] $\\
(24) &[R1] 
&$[2^*] [1,1,3^*] [2,6^*] $\\
(25) &[NR]& $[2^*] [2,3^*] [1,1,6^*] $\\
(26) & [NR]& $[1,2^*] [3^*] [1,2,6^*] $\\
(27) &[NR] &$[1,1,2^*] [1,2,3^*] [6^*] $\\
(28) & [NR]& $[2^*] [1,3^*] [1,3,6^*]$ \\
(29) & [NE] &$[2^*] [1,3^*] [2,2,6^*] $\\
(30) &[R25] 
&$[2^*] [3^*] [1,1,4,6^*] $\\
(31) &[NR] &$[2^*] [3^*] [1,2,3,6^*] $
\end{tabular}
\begin{tabular}{ccc}
(32) &[R2]  
&$[2^*] [3^*] [2,2,2,6^*] $\\
(33) &[R5]  
&$[2^*] [1,1,2,2,3^*] [6^*] $\\
(34) &[NR] &$[1,2^*] [2,2,3^*] [1,6^*] $\\
(35) &[NR] &$[1,1,2^*] [2,3^*] [2,6^*] $\\
(36) &[R25]  
&$[3,2^*] [3^*] [1^3,6^*] $\\
(37) & [R45] 
&$[1^3, 2^*] [3^*] [3,6^*] $\\
& & \\
(38) & [NR] &$[1,2^*][1,4^*][1,4^*] $\\
(39) & [R47]  
&$[1^4,2^*][4^*][4^*] $\\
(40) & [NR]&$[2^*][4^*][1,1,2,4^*] $\\ 
(41) & [R38]  
&$[2^*][2,4^*][1,1,4^*] $ \\
(42) & [NR] &$[1,1,2^*][4^*][1,3,4^*] $\\ 
(43) & [R47]  
&$[1,1,2^*][4^*][2,2,4^*] $\\  
(44) & [R45] 
&$[1,1,2^*][2,4^*][2,4^*] $ \\
& & \\
(45) &[NR] &$[1,2^*][1,2^*][1,2^*][1,2^*]$\\
(46) &[NR] &$[2^*][2^*][2^*][1^4,2^*]$\\
(47) &[R45] 
&$[2^*][2^*][1,1,2^*][1,1,2^*]$\\
(48) &[NR] &$[1,2^*][1,2^*][1,2^*][1,1,3,2^*]$\\
(49) &[R46] 
&$[1,2^*][1,2^*][1^3,2^*][3,2^*]$\\
(50) &[R48] 
&$[2^*][2^*][1,1,2^*][1^3,3,2^*]$\\
(51) &[R49] 
&$[2^*][2^*][1^4,2^*][1,3,2^*]$\\
(52) &[R45] 
&$[2^*][1,1,2^*][1,1,2^*][1,3,2^*]$\\
(53) &[NR] &$[1^5,3,2^*][2^*][2^*] [2^*] $ \\
(54) &[NR] &$[1,2^*][1,2^*][1^3,2^*][1,4,2^*]$\\
(55) &[R58] 
&$[2^*][2^*][1^6,2^*][4,2^*]$\\
(56) &[R59] 
&$[2^*][2^*][1^4,2^*][1,1,4,2^*]$\\
(57) &[NR] &$[2^*][1^2 ,2^*][1^2 ,2^*][1^2,4,2^*]$\\
(58) &[NR] &$[2^*][1,1,2^*][1^4,2^*][4,2^*]$\\
(59) &[NR] &$[1^2 ,2^*][1^2 ,2^*][1^2 ,2^*][4,2^*]$\\
(60) &[NR] &$[1^4,4,2^*][2^*][2^*][1^2,2^*]$\\
(61) &[NR] &$[1^6,4,2^*][2^*][2^*] [2^*]$\\
(62) &[NR] &$[1^3,4,2^*][1,2^*][1,2^*][1,2^*]$ 
\end{tabular} 
\end{paracol} 

Once again we  label R for ``reduction" and add the number of the type that we reduce to, label NR for ``no reduction given", and NE for ``non-existent".

For all types marked ``NR" see the drawings in the pdf companion. 
It is easy to see that all of them can be drawn iteratively in infinitely many degrees by adding rings of hexagons around the middle. \\
The reduction arguments indicated above are carried out as in Section \ref{sect:class_1}. Here a reduction to a family that has already been realized in infinitely many degrees yields the same result for the family marked ``R". With the exception of types 12, 36, 49, 52 where the reduction is via Lemma \ref{lemma:addlines}, the reductions are all done  by composing with a genus $0$ covering. We carry out two of the reductions deserving special attention explicitly. 
\\
Firstly, the type 30 can be obtained in the following way: Let $h:\mathbb{P}^1\to \mathbb{P}^1$ be defined by $x\mapsto x^2(x-1)$. This covering has ramification type $[2,1][2,1][3]$. Denote by $a_{i,j}$ the unique preimage of the $i$-th branch point of $h$ with multiplicity $j$, for $1\le i,j\le 2$. Let $g$ be a covering of ramification type $[2^*][2,3^*][1,1,6^*]$, whose branch points (in this order) are $a_{1,1}, a_{2,2}$ and $a_{2,1}$. Then $h\circ g$ has ramification type $[2^*][1,1,4,6^*][3^*]$.
\\
Secondly, Lemma \ref{lemma:addlines} yields type 52 from type 45 by adding an edge between a vertex of degree $2$ and an adjacent one of degree $1$. To make sure that there are two such adjacent vertices, it suffices to observe that if all vertices of degree $1$ were connected to each other, they would form their own connected component, contradicting that the whole dessin is connected.

Finally, we show that the type $[2^*] [1 , 3^*] [ 2,2, 6^*]$ is nonrealizable. This argument was also previously used by Zieve to show the nonrealizability of this type. 
We first claim that this ramification type has to correspond to a decomposable covering. Indeed, call $P_1,P_2, P_3$ the points over which we have ramification $[2^*], [1, 3^*], [2,2,6^*],$ respectively. By Lemma 9.1.1 in \cite{NZ}, the function has to factor through a degree 2 function with ramification $[2] [2]$ over $P_1,P_3$, proving the claim. However, since there is no way to split $[1,3^*]$ among the two preimages of $P_2$, this ramification data does not occur for decomposable coverings either.
\end{proof} 

We conclude this section by noting that, as the error is increased, one can construct infinitely many almost-regular families of spherical ramification types which are not realizable, using the same Lemma 9.1.1 in \cite{NZ} as above. Compare once again with the situation in genus $1$, where conjecturally there are only four exceptional families. Furthermore, we expect that as we allow more than six changes to $[2,3,6]$ and $[2,4,4]$ types, there will be more ramification types which are realizable only as quasi-local changes. 
%
%
%



\section{Proofs of Nonexistence of Almost-Regular Ramification Types on a Torus}
\label{nonex:genus1}

In this section we prove following genus-$1$ ramification types are not realizable. 
\begin{paracol}{4} 
\begin{tabular}{cccc} 
 $ [3^*] [3^*] [2, 4, 3^*]$ &
 $ [2^*] [2^*] [2^*] [1,3, 2^*]$ &
 $[ 2^*] [4^*][ 3,5, 4^*]$ &
 $ [ 2^*] [ 3^*] [5,7,6^*]$
\end{tabular} 
\end{paracol}

Note the last two on this list have $\varepsilon \geq 7$, but they are shown not to exist in \cite{IKRSS}, so we include them here for completeness. 

Corjava and Zannier \cite{CZ} have provided a non-existence argument for the first type.  The proof uses a counting argument for divisor classes in the Picard group of an elliptic curve. 



The next argument shows nonexistence of $ [ 2^*] ^3 [1,3, 2^*]$, proven independently by Do-Zieve \cite{Z}. 

\begin{theorem}
The ramification type $[2^*]^3[1,3,2^*]$ is not realizable in any degree.
\end{theorem}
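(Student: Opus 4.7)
My plan is to translate the combinatorial nonexistence question into a statement about divisor classes on an elliptic curve, and to derive a contradiction from the linear equivalence of two distinct points on a curve of positive genus.

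Suppose, for contradiction, that $f\colon E\to\mathbb{P}^1$ is a degree-$n$ covering realizing $[2^*]^3[1,3,2^*]$, so that $E$ has genus $1$. Label the branch points $p_1,p_2,p_3,p_4$ in the order of the partitions, and write
$$f^*(p_i)=2F_i\quad (i=1,2,3),\qquad f^*(p_4)=Q_0+3Q_1+2G,$$
where $F_i$ is an effective divisor of degree $n/2$, $G$ is effective of degree $(n-4)/2$, and $Q_0,Q_1$ are the distinct preimages of $p_4$ of ramification index $1$ and $3$, respectively. I would then assemble two sets of linear equivalences on $E$.

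First, any two points of $\mathbb{P}^1$ are linearly equivalent, so pulling back $p_i\sim p_4$ yields
$$2F_i\sim Q_0+3Q_1+2G\qquad (i=1,2,3).\qquad(\ast)$$
Second, applying Riemann--Hurwitz in divisorial form, $K_E\sim f^*K_{\mathbb{P}^1}+R$, with $K_E\sim 0$, $K_{\mathbb{P}^1}\sim -2p_4$, and ramification divisor $R=\sum_{i=1}^{3}F_i+2Q_1+G$, I obtain
$$\sum_{i=1}^{3}F_i\sim 2Q_0+4Q_1+3G.\qquad(\ast\ast)$$

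Now I would double $(\ast\ast)$ and subtract the sum of the three relations $(\ast)$; both $\sum F_i$ and $G$ cancel, leaving $Q_0\sim Q_1$ in $\operatorname{Pic}(E)$. Since $Q_0\ne Q_1$ are two distinct points on a curve of genus one, this is impossible (otherwise $E$ would admit a degree-one map to $\mathbb{P}^1$ and be rational), giving the desired contradiction. The only subtlety is recognizing that $(\ast)$ and $(\ast\ast)$, each of which individually allows ample freedom in the positions of $Q_0$ and $Q_1$, nevertheless combine to annihilate the ``$2$-torsion ambiguity'' implicit in $(\ast)$ and to isolate precisely the class $Q_1-Q_0$; once the right linear combination is found, the argument is a one-line cancellation.
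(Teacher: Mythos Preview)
Your argument is correct. The divisor computations check out: doubling $(\ast\ast)$ gives $2\sum F_i\sim 4Q_0+8Q_1+6G$, summing the three instances of $(\ast)$ gives $2\sum F_i\sim 3Q_0+9Q_1+6G$, and the difference is precisely $Q_0\sim Q_1$, impossible on a curve of genus one.

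This route is, however, genuinely different from the paper's own proof of this theorem. The paper argues via the Riemann existence theorem and a direct analysis of the permutations: if $\sigma_1,\dots,\sigma_4\in S_n$ satisfy $\sigma_1\cdots\sigma_4=1$ with $\sigma_2,\sigma_3,\sigma_4$ fixed-point-free involutions and $\sigma_1$ of type $[1,3,2^*]$, one studies the cycle type of $\sigma_1\sigma_2=(\sigma_3\sigma_4)^{-1}$. On each orbit of $\langle\sigma_3,\sigma_4\rangle$ the product of two fixed-point-free involutions has every cycle length appearing with even multiplicity; the same holds on every orbit of $\langle\sigma_1,\sigma_2\rangle$ except the one containing the support of the $1$- and $3$-cycle, where one is forced either into the nonexistent planar type $([1,2^*,3],[2^*],[k,k])$ (citing Pakovitch) or into two unequal cycle lengths, contradicting the parity count.

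Your Picard-group argument is shorter and self-contained (no appeal to an auxiliary nonexistence result), and it is exactly the mechanism Corvaja--Zannier use for the companion types $[3^*]^2[2,4,3^*]$, $[2^*][4^*][3,5,4^*]$, $[2^*][3^*][5,7,6^*]$; so your method treats all four exceptional families uniformly. The paper's permutation argument, by contrast, gives finer structural information about why the would-be monodromy tuple fails and stays entirely within elementary combinatorics, at the cost of invoking an outside lemma.
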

\begin{proof}
Since every dessin is induced by a covering, it suffices to prove the following statement:\\
Let $n\ge 4$ be an even integer. There are no permutations $\sigma_1,...,\sigma_4\in S_n$ such that all of the following hold:
\begin{itemize}
\item[a)] $\sigma_1$ is of cycle type $[1,2^*,3]$,
\item[b)] $\sigma_2,...,\sigma_4$ are all of cycle type $[2^*]$,
\item[c)] $\sigma_1\cdots \sigma_4=1$.
\end{itemize}

Assume that such $\sigma_1,...,\sigma_4$ exist. By c), we have $\sigma_1\sigma_2=(\sigma_3\sigma_4)^{-1}$, so $\sigma_1\sigma_2$ and $\sigma_3\sigma_4$ have the same cycle type. We will investigate this cycle type, first by looking at $\sigma_3$ and $\sigma_4$, and then by looking at $\sigma_1$ and $\sigma_2$.

Firstly, note that when investigating the cycles of a product $xy$, we can treat each orbit of the group $\langle x,y\rangle$ separately, because every orbit of the latter group is of course a union of orbits of (the cyclic group generated by) $xy \in \langle x,y\rangle$, i.e.\ a union of cycles of $xy$.

Now $\sigma_3\sigma_4$ is a product of two fixed-point free involutions. Restrict the action of $\langle \sigma_3,\sigma_4\rangle$ to one of its orbits. Denote the images of $x\in \langle\sigma_3,\sigma_4\rangle$ on this orbit by $\overline{x}$. Of course, $\overline{\sigma_3}$ and $\overline{\sigma_4}$ are still fixed-point free involutions. Note also that $\langle \overline{\sigma_3},\overline{\sigma_4}\rangle$ is transitive. But a transitive group generated by two involutions is a dihedral group $D_{2k}$ (of order $2k$ for some $k\in \NN$), and the fact that the involutions act without fixed points forces the product $\overline{\sigma_3\sigma_4}$ to have exactly two cycles of length $k$.

{\it This last claim is of course easy to see in a purely combinatorial way, but can also be shown elegantly using dessins: As a special case of Riemann-Hurwitz formula, the element $\overline{\sigma_3\sigma_4}$ has to have exactly two cycles, and the genus of $(\overline{\sigma_3},\overline{\sigma_4},(\overline{\sigma_3\sigma_4})^{-1})$ is then zero (no other choice for $\overline{\sigma_3\sigma_4}$ would make this genus a non-negative integer). The corresponding dessin is then a connected planar graph with all vertices of degree 2, i.e.\ a circle. Therefore the two faces have the same degree $k$.}

The above shows that every cycle length of $\sigma_1\sigma_2$ occurs an even number of times.

We now investigate the orbits of $U:=\langle \sigma_1,\sigma_2\rangle$. Without loss of generality, assume that the $3$-cycle of $\sigma_1$ is $(1,2,3)$, and the fixed point is $4$.\\
First, note that all orbits of $U$ are of even length (since these orbits are unions of cycles of $\sigma_2$, and the latter has only cycles of length 2), and are unions of cycles of $\sigma_1$. In particular, $1,2,3$ and $4$ have to be contained in the same orbit of $U$. Denote this orbit by $\mathcal{O}$. The image of $\sigma_1$ on $\mathcal{O}$ is then still of cycle type $[1,2^*,3]$, and the image of $\sigma_2$ is still of type $[2^*]$. As above, denote images by $\overline{x}$, and note that $\overline{U}$ acts transitively.
The Riemann-Hurwitz formula then forces $\overline{\sigma_1\sigma_2}$ to have exactly two cycles (and the tuple $(\overline{\sigma_1},\overline{\sigma_2},(\overline{\sigma_1\sigma_2})^{-1})$ is of genus zero by the Riemann-Hurwitz formula). But now there are only two cases, each of which will lead to a contradiction:
\begin{itemize}
\item[i)] $\overline{\sigma_1\sigma_2}$ has two cycles of the same length $k$.\\
We then obtain a planar dessin of type $([1,2^*,3], [2^*], [k,k])$. However such a dessin does not exist (see e.g.\ \cite[Lemma 4.7]{Pakovitch}).
\item[ii)] $\overline{\sigma_1\sigma_2}$ has two cycles of different lengths $k_1\ne k_2$.\\
But note that on all other orbits of $U$ (except for $\mathcal{O}$), $\sigma_1$ and $\sigma_2$ both act as fixed point free involutions; so by the same argument as above, every cycle length of $\sigma_1\sigma_2$ in the action on any of these orbits occurs an even number of times. Adding the one missing orbit $\mathcal{O}$, we find that the cycle length $k_1$ (and of course also $k_2$) occurs an odd number of times in $\sigma_1\sigma_2$. But we showed above that all cycle lengths of $\sigma_1\sigma_2$ occur an even number of times. This contradiction ends the proof.
\end{itemize}
\end{proof}




For geometric insight, we summarize the proof of \cite{IKRSS} for nonexistence of $ [3^*] [3^*] [2, 4, 3^*]$.  The same idea may be used to proved nonexistence of $[ 2^*] [4^*][ 3,5, 4^*]$ and $ [ 2^*] [ 3^*] [5,7,6^*]$.    

Let $\mathbb{T}$ be a torus with a hexagonal tiling. We put an equilateral metric on $\mathbb{T}$ by declaring that every edge has length 1, and every hexagon is a Euclidean regular hexagon.    If there were a $ [3^*] [3^*] [2, 4, 3^*]$ tiling of the torus, it would be a regular tiling of hexagons with three hexagons meeting at each vertex, except at two special vertices, one with two hexagons meeting, and one with 4 hexagons.  This would induce a euclidean metric with exactly two cone singularities,
in which every point has a neighborhood either isometric to an open subset of the euclidean plane, or to the apex of a euclidean cone. 

In \cite{IKRSS}, a contradiction is obtained by studying the holonomy groups of such metrics, and proving the holonomy theorem which states that in a Euclidean cone metric with two cone points, of curvature $\pm \frac{2 \pi}{n}$, the holonomy group $H$ contains the cyclic group $C_n$ of order $n$ as a proper subgroup $C_n  \lneqq H$. 


The holonomy group of a surface, $M$, with Euclidean cone metric is generated by $\pi_1 (M)$ together with loops around each of the cone points.  
It is shown in \cite{IKRSS} that any hexangulation of $M$ with vertices that can be two-colored has holonomy group that is a subgroup of $C_3$.   We can also see this by looking at the developing map into the Euclidean plane, and noticing that the tiling has a 3-symmetry. 
However, the assumptions on the extra vertices of $ [3^*] [3^*] [2, 4, 3^*]$ imply $C_3$ is a proper subgroup of the holonomy group ($= C_3$), which is a contradiction.  The idea of the proof of the holonomy theorem is to find additional loops in $\pi_1 M$ by pushing paths into the digon formed by the geodesics between the two cone points.  

This argument works only for types which have changes only in one partition, as this condition is necessary to define a Euclidean cone metric. 




\begin{remark}  
Any tiling of the torus gives an infinite tiling of the plane, as the torus is a quotient of the plane.   To extend a tiling on the torus with local changes to a tiling of the plane with the same local changes, fill in the rest of the lattice outside the fundamental domain with a tiling of regular hexagons (or squares).  However, Dress \cite{Dress} has proved there are small changes to hexagon and square tilings, which tile the plane but not the torus.  For example, it \emph{is possible} to tile the plane with four edges meeting at every vertex, with a tiling of all squares except one pentagon and one triangle. As shown above, such a tiling, of ramification type $[2^*][4^*][3,5,4^*]$,  is not possible on the torus. 
\end{remark}

\section{Appendix: Quasi-local changes and stability in permutations}\label{sec:quasi-local}
Section \ref{sect:sphere} contains a pure existence result for almost-regular ramification types of genus $0$. It is natural to try to prove a ``local changes" property as well, as in Theorem \ref{thm:existence_strong}. To do so, we first have to generalize the notion of regular dessins.
\subsection{Regular Spherical Types as Quotients of a Regular Type on a Torus}
\label{sect:regular_types}
For any degree-$n$ covering $f: \mathbb{P}^1\to \mathbb{P}^1$ of ramification type $[s_1,...,s_r]$ (where the $s_i$ are partitions of $n$), let $g$ be the Galois closure of $f$, i.e.\ the (unique) Galois covering $g$ of minimal degree factoring as $g=f\circ h$ for some covering $h$. 
We call the ramification type $[s_1,...,s_r]$ a \textit{regular} spherical type, if $g$ is of one of the four regular ramification types of genus one. It is well-known that for a Galois covering, all ramification indices over one given branch point $p_i$ are the same, and in the above scenario are in fact equal to $\lcm(s_i):=\lcm(e_{i,1},...,e_{i,j})$ where $[e_{i,1},...,e_{i,j}]$ is the partition $s_i$. 

A regular spherical ramification type is a special case of an almost-regular ramification type. 
With this background, we can rephrase our above notion of regular ramification types:

\begin{lemma}
Assume that a covering $f:\mathbb{P}^1\to \mathbb{P}^1$ of ramification type $[s_1,...,s_r]$ exists. Then the following are equivalent:
\begin{itemize}
\item[i)] This type is a regular ramification type, with Galois closure of ramification type $[k_1^*,...,k_r^*]$ (one of $[2^*]^4$, $[3^*]^3$, $[2^*][4^*]^2$, or $[2^*][3^*][6^*]$).
\item[ii)] For all $i$, $\lcm(s_i)=k_i$.
\item[iii)] 
There is a Galois covering $g:\mathbb{T}\to \mathbb{P}^1$ from the torus to the sphere and a subgroup $U$ of the corresponding Galois group, such that $f: \mathbb{T}/U\to \mathbb{P}^1$ is the induced map on the quotient space.
\end{itemize}
\end{lemma}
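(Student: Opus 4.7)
The plan is to argue the three conditions are equivalent via (i) $\Leftrightarrow$ (ii) directly from the preceding recalled fact, then (i) $\Rightarrow$ (iii) by exhibiting $g$ as the Galois closure of $f$, and finally (iii) $\Rightarrow$ (i) by reducing back to (ii) through a Galois-theoretic computation.

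For (i) $\Leftrightarrow$ (ii), I would invoke the fact stated immediately before the lemma: the Galois closure $g$ of $f$ has ramification index equal to $\lcm(s_i)$ over each $p_i$. Hence $g$ has ramification type $[\lcm(s_1)^*,\ldots,\lcm(s_r)^*]$, and this coincides with one of the four regular genus-1 tuples $[k_1^*,\ldots,k_r^*]$ precisely when $\lcm(s_i) = k_i$ for all $i$.

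For (i) $\Rightarrow$ (iii), I would simply set $g$ equal to the Galois closure of $f$. Since by (i) the ramification type of $g$ is one of the four regular genus-1 types, Riemann--Hurwitz applied to $g$ forces its source to have genus one, hence to be a torus $\mathbb{T}$. Writing $g = f \circ h$ and letting $G := \operatorname{Gal}(g)$ act on $\mathbb{T}$ with $\mathbb{T}/G = \mathbb{P}^1$, I take $U \leq G$ to be the subgroup stabilizing the factor $h$; then $h$ identifies $\mathbb{T}/U$ with the source $\mathbb{P}^1$ of $f$, and the induced map $\mathbb{T}/U \to \mathbb{T}/G = \mathbb{P}^1$ is exactly $f$.

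For (iii) $\Rightarrow$ (i), suppose $f = g/U$ for a Galois covering $g : \mathbb{T} \to \mathbb{P}^1$ and a subgroup $U \leq G := \operatorname{Gal}(g)$. Applying Riemann--Hurwitz to $g$ forces it to have one of the four regular genus-1 ramification types $[k_1^*,\ldots,k_r^*]$ at its branch points $p_1,\ldots,p_r$. Using multiplicativity of ramification indices in the factorization $g = f \circ h$, where $h : \mathbb{T} \to \mathbb{T}/U$ is the quotient map, the ramification index of $f$ at a preimage $[x]$ of $p_i$ in $\mathbb{T}/U$ is $k_i/|U \cap I_x|$, where $I_x \leq G$ is the (cyclic of order $k_i$) inertia subgroup at $x$. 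Taking the least common multiple over all preimages of $p_i$, and using that the inertia subgroups at different preimages are conjugate in $G$, I would deduce $\lcm(s_i) = k_i$, i.e., condition (ii); combined with the already-established (ii) $\Rightarrow$ (i), this closes the loop.

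The main obstacle will be the final step: verifying $\lcm_x (k_i/|U \cap I_x|) = k_i$ requires a careful analysis of how the subgroup $U$ meets the various inertia subgroups of $G$, ultimately drawing on the structural description of $G$ as a finite quotient of one of the Euclidean triangle/quadrangle groups corresponding to the tuples $(2,2,2,2)$, $(3,3,3)$, $(2,4,4)$, $(2,3,6)$.
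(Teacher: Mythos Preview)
The paper states this lemma without proof. Your plan for (i)$\Leftrightarrow$(ii) and (i)$\Rightarrow$(iii) is correct and essentially immediate from the sentence preceding the lemma together with Riemann--Hurwitz.

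For (iii)$\Rightarrow$(i), however, the obstacle you flag is not merely technical: as literally stated, the implication is false, so no amount of inertia bookkeeping will close it. The problem is that (iii) does not require $g$ to be the Galois closure of $f$. If $N:=\operatorname{Core}_G(U)$ contains an element with a fixed point on $\mathbb{T}$, then the actual Galois closure $g/N$ of $f$ has genus $0$, and (i) fails. In your formula $e_f([x])=k_i/|U\cap I_x|$, the failure mode is that for some $i$ \emph{every} conjugate inertia group $I_x$ over $p_i$ meets $U$ nontrivially, forcing $\lcm_x\bigl(k_i/|U\cap I_x|\bigr)<k_i$.

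A concrete counterexample (with $f$ ramified over every branch point of $g$, so matching branch loci does not save you either): let $G=(\mathbb{Z}[i]/2\mathbb{Z}[i])\rtimes\langle i\rangle$ (order $16$) act on an elliptic curve $E$ so that $g\colon E\to\mathbb{P}^1$ is Galois of type $[2^*][4^*][4^*]$, and take $U=\langle(0,-1)\rangle$, the central involution (acting as $x\mapsto -x$ on $E$). Then $E/U\cong\mathbb{P}^1$ and $f=g/U\colon\mathbb{P}^1\to\mathbb{P}^1$ has degree $8$. The element $(0,-1)$ lies in exactly one of the two conjugacy classes of order-$4$ inertia subgroups, so over one of the two $k_i=4$ branch points every ramification index of $f$ equals $2$; over the other two branch points one gets $[2^4]$ and $[4^2]$. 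Since $U$ is central, $f$ is already Galois; its Galois closure is $f$ itself, of genus $0$. Thus (iii) holds, yet (i) and (ii) fail: the $\lcm$ tuple is $(2,2,4)$, not $(2,4,4)$.

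The lemma becomes correct if in (iii) one requires $g$ to be the Galois closure of $f$ (equivalently, $U$ core-free in $G$). Under that hypothesis (iii)$\Rightarrow$(i) is immediate --- a Galois cover of $\mathbb{P}^1$ with source a torus has one of the four regular genus-$1$ types by Riemann--Hurwitz --- and your arguments for the remaining directions then give a complete proof.
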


Note that our definition of regular spherical ramification types is a natural analog of regular ramification types in genus $1$. Namely, by the Riemann-Hurwitz formula, the coverings from the torus to the sphere whose Galois closure is still of genus $1$ are exactly those of regular ramification type.


{\emph{Examples.}} There are several different regular spherical types, which may be drawn in non (graph) isomorphic ways. E.g.
$$ [1 ,2^*] ^4 \qquad  [ 1^4 ,2^*] [ 2^*] ^3 \qquad [ 1, 1 ,2^*] ^2 [ 2^*]^2$$
are all of the regular almost-regular types $[2,2,2,2]$. They can also be constructed more geometrically: Consider the $[2^*]$ tiling of the torus.  This tiling has a 2-symmetry. The symmetry matches that of an individual hexagon: the two points of each color are opposite each other.  
This symmetry corresponds to a deck transformation of order two on the universal cover which is a tiling of the plane. 
Depending on the number of hexagons in the original torus tiling, the quotient by this 2-symmetry is one of the three possible regular spherical tilings for $[2^*]^4$.
Similarly, we can see the regular spherical tilings for the other types also as quotients by a symmetry.  The $[3^*]^3$ tiling has a quotient by a 3-symmetry, to give a regular spherical dessin, see  figure \ref{reg_sph}. 
The $ [2^*][ 3^*][ 6^*]$ tiling has a quotient by a 6- symmetry, and the $[2^*][ 4^*][ 4^*]$ tiling has a quotient by a 4-symmetry to give the respective regular spherical dessins. 

With this view point, we want to see the ramification data as changes to these regular spherical types.  However, we must suitably generalize our notion of local changes. 

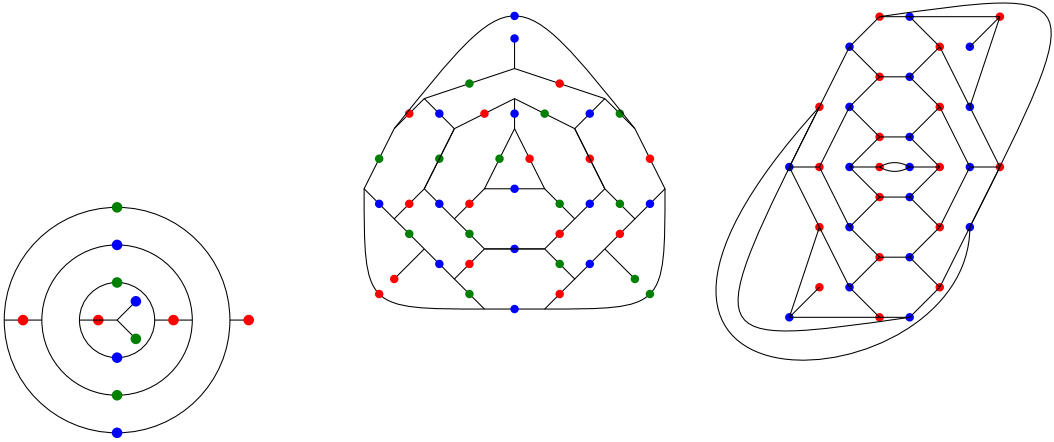
\begin{figure} 
\flushleft
\begin{subfigure} [t] {.2\textwidth} 
\begin{tikzpicture} [scale=.5]
\fill [red] (-1/2,0) circle (4pt) ; 
\draw (0,0) circle (1cm); 
\draw (-1,0) -- (0,0); 
\draw (0,0)-- (1/2, 1/2) ; 
\draw (0,0) -- (1/2, -1/2) ;
\fill [green] (1/2, -1/2) circle (4pt); 
\fill [blue] (1/2, 1/2) circle (4pt); 
\draw (0,0) circle (2cm); 
\draw (1,0)-- (2,0); 
\fill [red] (1.5,0) circle (4pt) ;
\fill [green] (0, 1) circle (4pt); 
\fill [blue] (0, -1) circle (4pt); 
\fill [green] (0, -2) circle (4pt); 
\fill [blue] (0, 2) circle (4pt); 
\draw (0,0)  circle (3cm); 
\draw ( -2,0)--(-3,0); 
\fill [red] (-2.5,0) circle (4pt) ;
\fill [green] (0, 3) circle (4pt); 
\fill [blue] (0, -3) circle (4pt); 
\draw (3,0)-- (3.5,0); 
\fill [red] (3.5,0) circle (4pt) ;
\end{tikzpicture} 
\label{$[1, 2^3] [1, 2^3] [1, 2^3] [1, 2^3]$}
\end{subfigure} 
\hspace{-1cm}
\begin{subfigure}[t] {.2\textwidth} 
\begin{tikzpicture}[scale=.4] 
\draw ( 0,0) -- (2,0) -- (1,2) --cycle; 
\fill[ blue] (1,0) circle (4pt); 
\fill [red ] ( 3/2, 1) circle (4pt); 
\fill [ green] (1/2,1) circle (4pt); 
\draw ( 1,2) -- ( 1,3) ; 
\fill [ blue] (1,5/2) circle (4pt); 
\draw ( 2,0) -- ( 3,-1) ; 
\fill [green] (5/2, -1/2) circle (4pt); 
\draw ( 0,0) -- (-1,-1) ; 
\fill [red] (-1/2, -1/2) circle (4pt) ; 
\draw ( 0, -2) -- (2, -2) ; 
\draw ( 0, -2 ) -- ( -1, -1) ; 
\draw ( 2, -2) -- ( 3, -1) ; 
\draw ( 3,-1) -- (4,0) ; 
\draw ( -1, -1) -- (-2,0); 
\draw ( 1,3) -- ( -1, 2) ; 
\draw (1,3) -- ( 3,2) ; 
\draw ( 3,2) -- ( 4,0) ; 
\draw ( -1, 2) -- ( -2,0) ; 
\fill [red] (5/2, -3/2) circle (4pt) ; 
\fill [ blue] (1,-2) circle (4pt);
\fill [green] (-1/2, -3/2) circle (4pt); 
\fill [ blue] (3.5,-1/2) circle (4pt);
\fill [red] (7/2, 1) circle (4pt) ; 
\fill [green] (2, 5/2) circle (4pt); 
\fill [red] (0, 5/2) circle (4pt) ; 
\fill [green] (-3/2, 1) circle (4pt); 
\fill [ blue] (-3/2,-1/2) circle (4pt);
\draw (0,-2) -- ( -1, -3) -- (0, -4) -- (2, -4) -- (3, -3) -- ( 2, -2) --cycle; 
\draw (3,2) -- ( 4,3) -- ( 5,2) -- (6,0) -- (5, -1) -- (4,0)-- cycle; 
\draw ( 3,-3) -- (4, -2) -- (5, -1) ; 
\draw (-2,0) -- (-3, -1) -- (-4, 0)-- ( -3, 2) -- ( -2, 3) -- ( -1, 2) -- (-2,0)-- cycle; 
\draw (-1, -3) -- (-2, -2) -- ( -3, -1) ; 
\draw ( -2, 3) -- (1, 4) -- ( 4, 3) ; 
\fill [green] (4.5, -1/2) circle (4pt); 
\fill [ blue] (3.5,2.5) circle (4pt);
\fill [ blue] (-3/2,2.5) circle (4pt);
\fill [red] (-2.5, -1/2) circle (4pt) ; 
\fill [red] (-.5, -2.5) circle (4pt) ; 
\fill [green] (2.5, -2.5) circle (4pt); 
\fill [red] (2.5, -3.5) circle (4pt) ; 
\fill [ blue] (1,-4) circle (4pt);
\fill [green] (-.5, -3.5) circle (4pt); 
\fill [ blue] (3.5,-2.5) circle (4pt);
\fill [red] (4.5, -1.5) circle (4pt) ; 
\fill [ blue] (5.5,-.5) circle (4pt);
\fill [red] (5.5, 1) circle (4pt) ;
\fill [green] (4.5, 2.5) circle (4pt); 
\fill [red] (2.5, 3.5) circle (4pt) ; 
\fill [green] (-.5, 3.5) circle (4pt); 
\fill [red] (-2.5, 2.5) circle (4pt) ; 
\fill [green] (-3.5, 1) circle (4pt); 
\fill [ blue] (-3.5,-.5) circle (4pt);
\fill [green] (-2.5, -1.5) circle (4pt); 
\fill [ blue] (-1.5,-2.5) circle (4pt);
\draw (1,4) -- (1,5) ; 
\fill [ blue] (1,5) circle (4pt);
\draw (-3,2) .. controls (1,7).. (5,2); 
\fill [ blue] (1,5.75) circle (4pt);
\draw (4,-2)-- (5,-3); 
\fill [green] (5, -3) circle (4pt); 
\draw (2,-4) ..controls (6, -4) .. ( 6,0); 
\fill [green] (5.5, -3.5) circle (4pt); 
\draw (-2,-2) -- (-3, -3) ; 
\fill [red] (-3, -3) circle (4pt) ; 
\draw (-4,0) .. controls (-4, -4) .. ( 0, -4) ; 
\fill [red] (-3.5, -3.5) circle (4pt) ; 
\node at (8,8) {}; 
\node at (-10, -8){}; 
\node at (30,0) {}; 
\end{tikzpicture} 
\end{subfigure}
\qquad \qquad
\begin{subfigure}[t] {.2\textwidth} 
\begin{tikzpicture}[scale=.4] 
\fill[red] (0,0) circle (4pt);
\fill[blue] (1,0) circle (4pt); 
\draw (0,0) to [bend right] (1,0) ; 
\draw (0,0) to [bend left] (1,0) ; 
\fill[red] (2,0) circle (4pt);
\fill[blue] (-1,0) circle (4pt); 
\draw (1,0) -- (2,0); 
\draw (0,0) -- (-1,0); 
\fill[blue] (1,1) circle (4pt); 
\fill[red] (0,1) circle (4pt);
\draw (1,1) --(0,1) ; 
\draw( 1,1) -- (2,0); 
\draw(0,1) -- (-1,0); 
\fill[red] (0,-1) circle (4pt);
\fill[blue] (1,-1) circle (4pt); 
\draw (0,-1) -- ( 1,-1) ; 
\draw (0,-1) -- (-1,0); 
\draw (1,-1) --(2,0); 
\fill[blue] (1,3) circle (4pt); 
\fill[red] (0,3) circle (4pt); 
\fill[red] (2,2) circle (4pt);
\fill[blue] (-1,2) circle (4pt); 
\draw (1,3) -- (0,3); 
\draw (1,3) -- (2,2) ; 
\draw(2,2) -- (1,1) ; 
\draw (0,3) -- ( -1,2) ; 
\draw (-1,2) --( 0,1) ; 
\fill[blue] (1,-3) circle (4pt); 
\fill[red] (0,-3) circle (4pt); 
\fill[red] (2,-2) circle (4pt);
\fill[blue] (-1,-2) circle (4pt); 
\draw (1,-3) -- (0,-3); 
\draw (1,-3) -- (2,-2) ; 
\draw(2,-2) -- (1,-1) ; 
\draw (0,-3) -- ( -1,-2) ; 
\draw (-1,-2) --( 0,-1) ; 
\fill[blue] (3,0) circle (4pt);
\fill[red] (-2,0) circle (4pt);  
\draw  (3,0) -- (2,-2) ; 
\draw ( 3,0) -- (2,2) ; 
\draw ( -2,0 ) -- (-1,2) ; 
\draw ( -2,0 ) -- ( -1,-2) ; 
\fill[blue] (1,5) circle (4pt); 
\fill[red] (2,4) circle (4pt); 
\fill[red] (0,5) circle (4pt);
\fill[blue] (-1,4) circle (4pt); 
\draw ( 1,3) -- ( 2,4) ; 
\draw ( 2, 4) -- (1,5) ; 
\draw ( 1,5) -- (0,5) ; 
\draw ( 0,5) -- (-1,4) ; 
\draw ( -1,4) -- (0,3) ; 
\fill[blue] (1,-5) circle (4pt); 
\fill[red] (2,-4) circle (4pt); 
\fill[red] (0,-5) circle (4pt);
\fill[blue] (-1,-4) circle (4pt); 
\draw ( 1,-3) -- ( 2,-4) ; 
\draw ( 2, -4) -- (1,-5) ; 
\draw ( 1,-5) -- (0,-5) ; 
\draw ( 0,-5) -- (-1,-4) ; 
\draw ( -1,-4) -- (0,-3) ;
\fill [red] (4,0) circle (4pt);  
\fill[ blue] (3,2) circle (4pt) ; 
\fill[ blue] (3,-2) circle (4pt) ; 
\draw (3,0) -- (4,0) ; 
\draw ( 4,0) -- ( 3,2) ; 
\draw ( 3,2) -- ( 2,4) ; 
\draw ( 4,0) -- ( 3,-2) ; 
\draw ( 3,-2) -- ( 2,-4) ;
\fill[ blue] (-3,0) circle  (4pt); 
\fill [red] (-2,2) circle (4pt) ; 
\fill [red] (-2,-2) circle (4pt) ; 
\draw ( -3,0 ) --(-2,0); 
\draw ( -3,0 ) -- ( -2,2) ; 
\draw ( -2,2) -- ( -1,4) ; 
\draw ( -3,0 ) -- ( -2,-2) ; 
\draw ( -2,-2) -- ( -1,-4) ; 
\fill[red] (4,5) circle (4pt) ; 
\draw ( 4,5) -- ( 1,5) ; 
\draw ( 4,5) -- ( 3,2) ; 
\fill [blue] (3,4) circle (4pt); 
\draw (3,4) -- (4,5) ; 
\fill [blue] (-3,-5) circle (4pt) ; 
\draw ( -3,-5) -- ( 0, -5) ; 
\draw ( -3,-5) -- ( -2, -2) ; 
\fill [ red] ( -2,-4) circle (4pt) ; 
\draw ( -2, -4) -- ( -3,-5) ; 
\draw ( 1,-5)  .. controls ( -6,-6) ..  (-2,2) ; 
\draw ( 0,5) .. controls(7,6)  .. (3,-2) ; 
\draw (3,-2) .. controls (3, -8) and (-12, -9) .. (-2, 2) ; 
\node at (8,8) {}; 
\node at (-10, -6){}; 
\end{tikzpicture} 
\end{subfigure} 
\caption{Regular $[2,2,2,2]$ and $[3,3,3]$ Spherical Dessins}
\label{reg_sph} 
\end{figure} 

\subsection{Quasi-local changes}

%
%

Below, we generalize the notion of local changes: 
\begin{definition}
\label{def:quasi_local}[Quasi-local changes]
Let $(C_i)_{i\in \NN}$ and $(D_i)_{i\in \NN}$ be two families of dessins on a surface $R$, all with the same set of branch points. Let $n_i$ be the degree of the covering associated to $C_i$, and $m_i$ be the degree of the covering associated to $D_i$. Assume that $n_i,m_i \to \infty$.
 We say that $(C_i)_i$ is realizable as \emph{quasi-local changes} to $(D_i)_i$ if $C_i$ can be transformed into $D_i$ with $k_i\in \NN$ changes where $\lim_{i\to \infty} k_i/n_i = 0$.
\end{definition}

\begin{question}
\label{conj:changes}
Assume that a family of almost-regular ramification types of genus zero is realizable. Is it then true that it is realizable as quasi-local changes to a family of dessins of regular spherical type?
\end{question}

We found Question \ref{conj:changes} to have a positive answer for all of the ramification types with the restrictions of Theorem \ref{thm:localch0}. In fact, it is not difficult to verify from our drawings that all the $[2,2,2,2]$-, $[3,3,3]$- and $[2,4,4]$-almost-regular types in that theorem are still realizable as local changes to a regular spherical dessin.
However, computational evidence suggests that the latter is no longer true for all $[2,3,6]$-almost-regular types, and in particular not for the type $[2^*][2,3^*][1,1,6^*]$.

In trying to answer Question \ref{conj:changes}, it may be useful to note that our reduction lemmas \ref{lemma:rattranslates} and \ref{lemma:addlines} still remain true (with the same proofs) after replacing the notion of local changes by quasi-local changes.
\subsection{Stability in permutations}
To give some more evidence why it is reasonable to expect Question \ref{conj:changes} (as well as Question \ref{conj:existence}) to have a positive answer, and in particular to justify the above notion of quasi-local changes, we relate them to a purely group-theoretical conjecture by Arzhantseva and Paunescu (\cite{AP}).

Recall the following, which is a version of the well-known Riemann existence theorem:
\begin{proposition}
Let $E:=(E_1,\dots,E_r)$ be a ramification data, where the $E_i$ are partitions of $n$. Then $E$ occurs as the ramification type of a covering of $\mathbb{P}^1$ if and only if there exist permutations $\sigma_1,...,\sigma_r\in S_n$ with the following properties:
\begin{itemize}
\item[i)] The cycle type of $\sigma_i$ is given by the partition $E_i$, for all $i=1,...,r$.
\item[ii)] $\sigma_1\cdots \sigma_r=1$.
\item[iii)] The group generated by $\sigma_1,\dots\sigma_r$ is a transitive subgroup of $S_n$.
\end{itemize}
\label{ret}
\end{proposition}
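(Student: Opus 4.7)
The plan is straightforward: prove both directions of this classical Riemann existence theorem by setting up the monodromy representation associated with a branched covering, and conversely reconstructing a covering from such a representation.

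For the forward direction, I would take a branched covering $f\colon R\to\mathbb{P}^1$ with branch points $p_1,\ldots,p_r$, remove the branch locus and its preimages to obtain a finite topological covering $f^{\circ}\colon R\setminus f^{-1}(\{p_1,\ldots,p_r\})\to \mathbb{P}^1\setminus\{p_1,\ldots,p_r\}$, fix a base point $p_0$ and a labeling $\{q_1,\ldots,q_n\}=f^{-1}(p_0)$, and consider the associated monodromy homomorphism $\rho\colon \pi_1(\mathbb{P}^1\setminus\{p_1,\ldots,p_r\},p_0)\to S_n$ obtained by lifting paths. Choosing the standard generators $\gamma_1,\ldots,\gamma_r$ (small counter-clockwise loops around each $p_i$, ordered compatibly with the star tree of Section \ref{sect:dessins}) with the single defining relation $\gamma_1\cdots\gamma_r=1$, I set $\sigma_i:=\rho(\gamma_i)$. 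Condition (ii) is then automatic, condition (iii) follows from connectedness of $R$ since a non-transitive image would decompose $R$ into the preimages of the orbits, and condition (i) follows from the local analytic model $z\mapsto z^e$ near a ramification point of index $e$: a small loop around $p_i$ cycles through the $e$ sheets meeting at such a preimage, producing exactly one $e$-cycle of $\sigma_i$ for each point of $f^{-1}(p_i)$.

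Conversely, given permutations $\sigma_1,\ldots,\sigma_r$ satisfying (i)--(iii), I would fix distinct points $p_1,\ldots,p_r\in\mathbb{P}^1$, set $U:=\mathbb{P}^1\setminus\{p_1,\ldots,p_r\}$ and define $\rho\colon\pi_1(U,p_0)\to S_n$ by $\gamma_i\mapsto\sigma_i$; condition (ii) guarantees this is well defined. The Galois correspondence for covering spaces then produces a connected (by (iii)) $n$-sheeted unramified covering $Y\to U$ realizing $\rho$. To compactify, I would fill in the punctures above each $p_i$: over a small punctured disk $\Delta^*$ around $p_i$, the induced covering $Y\to\Delta^*$ decomposes into connected components, one for each cycle of $\sigma_i$, and a cycle of length $e$ yields the unique $e$-sheeted connected cover $z\mapsto z^e$ of $\Delta^*$. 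Each such component extends uniquely to a holomorphic map from a disk to a disk, and gluing in the central points endows $Y$ with the structure of a compact Riemann surface $R$ carrying a branched covering $f\colon R\to\mathbb{P}^1$ whose ramification data is exactly $(E_1,\ldots,E_r)$ by construction.

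There is no real conceptual obstacle here, as the result is classical (see for example \cite{LZ}); it ultimately rests on the topological correspondence between covering spaces of $U$ and subgroups of $\pi_1(U,p_0)$, together with the analytic uniqueness of the local branched model $z\mapsto z^e$ over a disk. The only item requiring genuine care is bookkeeping with the ordering and orientation conventions on the generators $\gamma_i$, so that the relation in $\pi_1(U,p_0)$ reads as $\gamma_1\cdots\gamma_r=1$ in the same order used for (ii), and so that the cycle-type bijection in (i) matches the local geometric picture.
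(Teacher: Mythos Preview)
Your argument is correct and is the standard proof of this version of the Riemann existence theorem. Note, however, that the paper does not actually prove this proposition: it is introduced with ``Recall the following, which is a version of the well-known Riemann existence theorem'' and stated without proof, so there is no paper proof to compare against. Your sketch is precisely the classical monodromy argument one would find in the references the paper cites (e.g.\ \cite{LZ} or \cite{Voe}), and the care you flag about ordering conventions for the $\gamma_i$ is indeed the only point requiring attention.
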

Due to the natural correspondence of dessins and tuples of permutations, the definition of local or quasi-local changes for families of dessins has a natural analog for
tuples of permutations. This notion is closely related to an existing notion of stability in permutations.
%
%
\begin{definition}[Stability]
\label{def:stability}
Let $F_m=\langle a_1,...,a_m\rangle$ be the free group of $m$ generators, and $R\subset F_m$ a finite set. For $\xi\in R$ and elements $x_1,...,x_m$ of some group $H$, denote by $\xi(x_1,...,x_m)$ the image under the unique homomorphism $F_m\to H$ with $a_i\mapsto x_i$.

Permutations $p_1,...,p_m\in S_n$ are called a solution of $R$ if $\xi(p_1,...,p_m)=1$ for all $\xi \in R$. They are called a $\delta$-solution of $R$ if $d_H(\xi(p_1,...,p_m),1)<\delta$ for all $\xi\in R$, where $d_H$ denotes the normalized Hamming distance in $S_n$, i.e. $d_H(p,q):=\frac{|\{x: 1\le x\le n, p(x)\ne q(x)\}|}{n}$.

The system $R$ is called stable (in permutations) if $\forall {\epsilon>0} \exists {\delta>0} \forall {n\in \NN}$ : For every $\delta$-solution $p_1,...,p_m \in S_n$ to $R$, there exists a solution $\tilde{p}_1,...\tilde{p}_m \in S_n$ to $R$ such that $d_H(p_i,\tilde{p}_i)<\epsilon$ (for all $i=1,...,m$).
The group $G=F_m/\langle R\rangle$ is called stable if $R$ is stable. 
\end{definition}

As noted in \cite{AP}, this definition is independent of the choice of the set $R$ of relators for a group $G$.  The notion of stability can be viewed as a group-theoretical analog of our notion of quasi-local changes for dessins. In the case of regular dessins of genus $1$, the associated permutation tuples as in Proposition \ref{ret} are of orders $(2,2,2,2)$, $(3,3,3)$, $(2,4,4)$ or $(2,3,6)$.
One may then ask whether, for a family of almost-regular dessins, the associated permutation tuples are close (in the sense of definition \ref{def:stability}) to some permutation tuples of the above orders.
Therefore, the groups we are interested in are the groups $\Delta(2,2,2,2)$, $\Delta(3,3,3)$, $\Delta(2,4,4)$ and $\Delta(2,3,6)$, where $\Delta(k_1,...,k_r):=\langle a_1,...,a_r\mid a_i^{k_i}=1 \text{ for } i=1,...,r\rangle$. The structure of these four groups is well known: They all have a normal subgroup isomorphic to $\ZZ^2$, and with cyclic quotient group (of order $2$, $3$, $4$ and $6$ in the respective cases).
In particular, all these groups are (polycyclic-by-finite and therefore) residually finite and (solvable and therefore) amenable.
It follows that all these groups (along with all their quotients) are ``weakly stable" in the sense of \cite[Def.\ 7.1]{AP}, see Theorem 1.1 of that paper.

It is conjectured in that paper that a group all of whose quotients are weakly stable is in fact stable (\cite[Conjecture 1.2]{AP}). Under this conjecture, the above groups would indeed all be stable. This would then imply that for any family of dessins of ``almost-regular" ramification type (say, close to a regular type $[k_1^*]...[k_r^*]$), it is possible to transform the corresponding permutation tuple into a tuple $(\sigma_1,...,\sigma_r)$ with $ord(\sigma_i)=k_i$ for all $i=1,...,r$, with ``quasi-local" changes in the sense of Definition \ref{def:stability}. (Note however, that this notion for permutation tuples is still slightly weaker than our notion of quasi-local changes for families of dessins; in particular, the group $\langle \sigma_1,...,\sigma_r\rangle$ need not be transitive; and if it is transitive, then the genus of the corresponding dessin need not be the same as the genus of the almost-regular type.)

\includepdf[pages={1-8}]{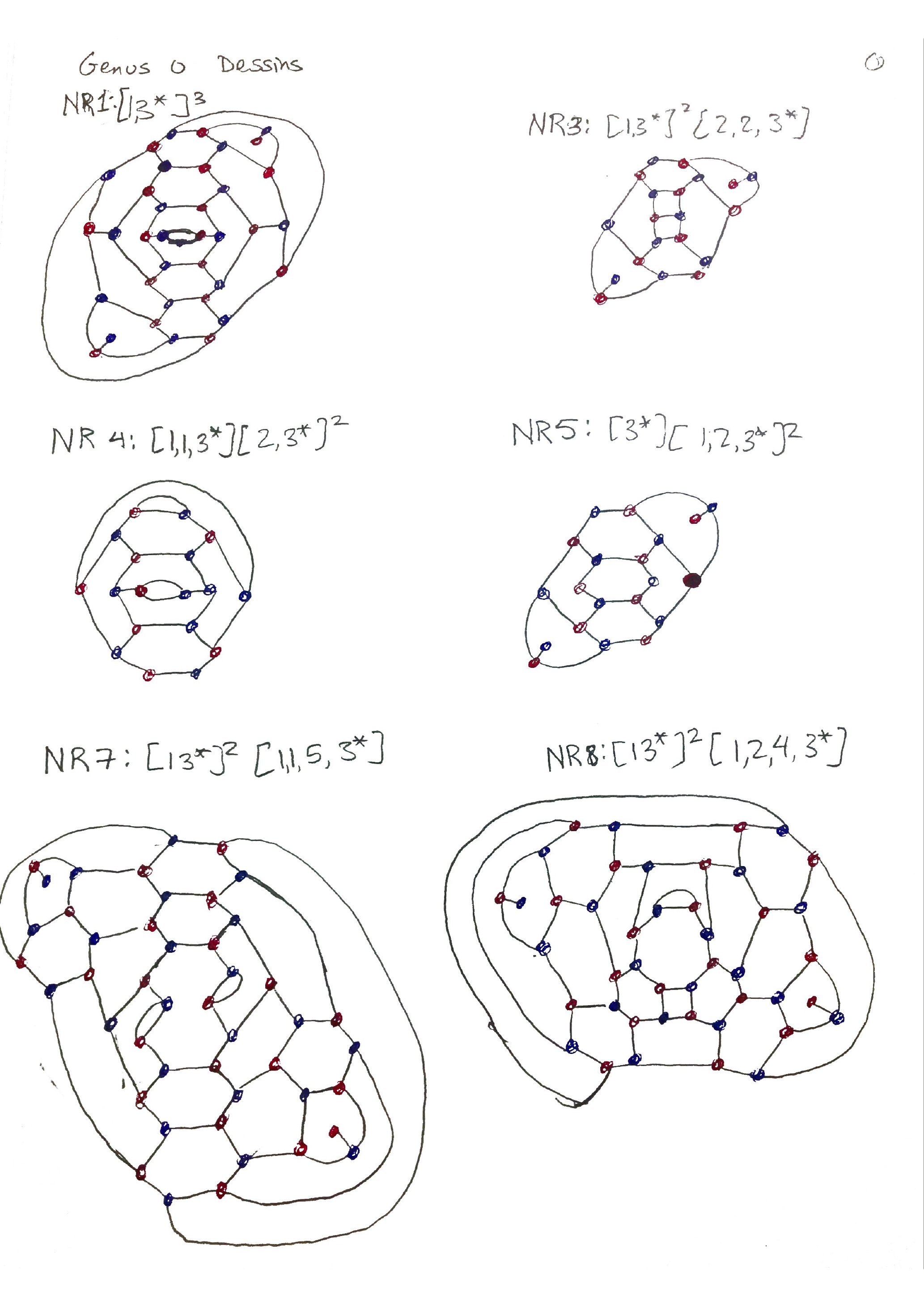}

\end{document}